\documentclass[leqno,11pt]{article}

\RequirePackage[amsthm,amsmath]{imsart}

 \parskip        0.3\baselineskip
 \topmargin      0.0truein
 \oddsidemargin  0.0truein	
 \evensidemargin 0.0truein
 \textheight     8.5truein
 \textwidth      6.2truein
 \headheight     0.0truein
 \headsep        0.3truein

\usepackage[mathscr]{eucal}
\usepackage{graphicx}
\usepackage{latexsym}
\usepackage{amssymb}
\usepackage{psfrag}
\usepackage{epsfig}
\usepackage{enumerate}
\DeclareMathAlphabet{\mathpzc}{OT1}{pzc}{m}{it}

\include{psbox}

\usepackage{amsfonts}
\usepackage{graphicx}
\usepackage{amsfonts}
\usepackage{color}
\usepackage[usenames,dvipsnames]{xcolor}
\usepackage[textsize=small]{todonotes}
\usepackage{caption,subcaption}
\usepackage{soul}

\DeclareMathOperator*{\Tr}{Tr}

\usepackage[numbers]{natbib}

\begin{document}

	\newtheorem{proposition}{Proposition}[section]
	\newtheorem{theorem}[proposition]{Theorem}
	\newtheorem{corollary}[proposition]{Corollary}
	\newtheorem{lemma}[proposition]{Lemma}
	\newtheorem{conjecture}[proposition]{Conjecture}
	\newtheorem{question}[proposition]{Question}
	\newtheorem{definition}[proposition]{Definition}
	\newtheorem{comment}[proposition]{Comment}
	\newtheorem{algorithm}[proposition]{Algorithm}
	\newtheorem{assumption}[proposition]{Assumption}
	\newtheorem{condition}[proposition]{Condition}
	\numberwithin{equation}{section}
	\numberwithin{proposition}{section}

\newcommand{\skp}{\vspace{\baselineskip}}
\newcommand{\noi}{\noindent}
\newcommand{\osc}{\mbox{osc}}
\newcommand{\lfl}{\lfloor}
\newcommand{\rfl}{\rfloor}

\theoremstyle{remark}
\newtheorem{example}{\bf Example}[section]
\newtheorem{remark}{\bf Remark}[section]

\newcommand{\img}{\imath}
\newcommand{\iy}{\infty}
\newcommand{\eps}{\varepsilon}
\newcommand{\del}{\delta}
\newcommand{\Rk}{\mathbb{R}^k}
\newcommand{\RR}{\mathbb{R}}
\newcommand{\spa}{\vspace{.2in}}
\newcommand{\V}{\mathcal{V}}
\newcommand{\E}{\mathbb{E}}
\newcommand{\I}{\mathbb{I}}
\newcommand{\PP}{\mathbb{P}}
\newcommand{\sgn}{\mbox{sgn}}
\newcommand{\ti}{\tilde}

\newcommand{\QQ}{\mathbb{Q}}

\newcommand{\XX}{\mathbb{X}}
\newcommand{\XXz}{\mathbb{X}^0}

\newcommand{\lan}{\langle}
\newcommand{\ran}{\rangle}
\newcommand{\lf}{\lfloor}
\newcommand{\rf}{\rfloor}
\def\wh{\widehat}
\newcommand{\defn}{\stackrel{def}{=}}
\newcommand{\txb}{\tau^{\epsilon,x}_{B^c}}
\newcommand{\tyb}{\tau^{\epsilon,y}_{B^c}}
\newcommand{\tilxb}{\tilde{\tau}^\eps_1}
\newcommand{\pxeps}{\mathbb{P}_x^{\eps}}
\newcommand{\non}{\nonumber}
\newcommand{\dist}{\mbox{dist}}

\newcommand{\Om}{\mathnormal{\Omega}}
\newcommand{\om}{\omega}
\newcommand{\vph}{\varphi}
\newcommand{\Del}{\mathnormal{\Delta}}
\newcommand{\Gam}{\mathnormal{\Gamma}}
\newcommand{\Sig}{\mathnormal{\Sigma}}

\newcommand{\tilyb}{\tilde{\tau}^\eps_2}
\newcommand{\beq}{\begin{eqnarray*}}
\newcommand{\eeq}{\end{eqnarray*}}
\newcommand{\beqn}{\begin{eqnarray}}
\newcommand{\eeqn}{\end{eqnarray}}
\newcommand{\ink}{\rule{.5\baselineskip}{.55\baselineskip}}

\newcommand{\bt}{\begin{theorem}}
\newcommand{\et}{\end{theorem}}
\newcommand{\deps}{\Del_{\eps}}
\newcommand{\dbl}{\mathbf{d}_{\tiny{\mbox{BL}}}}

\newcommand{\be}{\begin{equation}}
\newcommand{\ee}{\end{equation}}
\newcommand{\ac}{\mbox{AC}}
\newcommand{\BB}{\mathbb{B}}
\newcommand{\VV}{\mathbb{V}}
\newcommand{\DD}{\mathbb{D}}
\newcommand{\KK}{\mathbb{K}}
\newcommand{\HH}{\mathbb{H}}
\newcommand{\TT}{\mathbb{T}}
\newcommand{\CC}{\mathbb{C}}
\newcommand{\ZZ}{\mathbb{Z}}
\newcommand{\SSS}{\mathbb{S}}
\newcommand{\EE}{\mathbb{E}}
\newcommand{\NN}{\mathbb{N}}
\newcommand{\MM}{\mathbb{M}}

\newcommand{\clg}{\mathcal{G}}
\newcommand{\clb}{\mathcal{B}}
\newcommand{\cls}{\mathcal{S}}
\newcommand{\clc}{\mathcal{C}}
\newcommand{\clj}{\mathcal{J}}
\newcommand{\clm}{\mathcal{M}}
\newcommand{\clx}{\mathcal{X}}
\newcommand{\cld}{\mathcal{D}}
\newcommand{\cle}{\mathcal{E}}
\newcommand{\clv}{\mathcal{V}}
\newcommand{\clu}{\mathcal{U}}
\newcommand{\clr}{\mathcal{R}}
\newcommand{\clt}{\mathcal{T}}
\newcommand{\cll}{\mathcal{L}}
\newcommand{\clz}{\mathcal{Z}}
\newcommand{\clq}{\mathcal{Q}}
\newcommand{\clo}{\mathcal{O}}

\newcommand{\cli}{\mathcal{I}}
\newcommand{\clp}{\mathcal{P}}
\newcommand{\cla}{\mathcal{A}}
\newcommand{\clf}{\mathcal{F}}
\newcommand{\clh}{\mathcal{H}}
\newcommand{\N}{\mathbb{N}}
\newcommand{\Q}{\mathbb{Q}}
\newcommand{\bfx}{{\boldsymbol{x}}}
\newcommand{\bfa}{{\boldsymbol{a}}}
\newcommand{\bfh}{{\boldsymbol{h}}}
\newcommand{\bfs}{{\boldsymbol{s}}}
\newcommand{\bfm}{{\boldsymbol{m}}}
\newcommand{\bff}{{\boldsymbol{f}}}
\newcommand{\bfb}{{\boldsymbol{b}}}
\newcommand{\bfw}{{\boldsymbol{w}}}
\newcommand{\bfz}{{\boldsymbol{z}}}
\newcommand{\bfu}{{\boldsymbol{u}}}
\newcommand{\bfell}{{\boldsymbol{\ell}}}
\newcommand{\bfn}{{\boldsymbol{n}}}
\newcommand{\bfd}{{\boldsymbol{d}}}
\newcommand{\bfbeta}{{\boldsymbol{\beta}}}
\newcommand{\bfzeta}{{\boldsymbol{\zeta}}}
\newcommand{\bfnu}{{\boldsymbol{\nu}}}
\newcommand{\bfvarphi}{{\boldsymbol{\varphi}}}

\newcommand{\curvz}{{\bf \mathpzc{z}}}
\newcommand{\curvx}{{\bf \mathpzc{x}}}
\newcommand{\curvi}{{\bf \mathpzc{i}}}
\newcommand{\curvs}{{\bf \mathpzc{s}}}
\newcommand{\blip}{\mathbb{B}_1}
\newcommand{\loc}{\text{loc}}

\newcommand{\BM}{\mbox{BM}}

\newcommand{\tac}{\mbox{\scriptsize{AC}}}



\begin{frontmatter}
\title{Diffusion Approximations for Controlled Weakly Interacting Large Finite State Systems with Simultaneous Jumps	
}

 \runtitle{Diffusion Approximations for Controlled Weakly Interacting Systems}

\begin{aug}
\author{Amarjit Budhiraja  and Eric Friedlander\\ \ \\
}
\end{aug}

\today

\skp

\begin{abstract}
We consider a rate control problem  for an $N$-particle weakly interacting finite state Markov process. 
The process models the state evolution of a large collection of particles and allows for multiple particles to change state simultaneously.
Such models have been proposed for large communication systems (e.g. ad hoc wireless networks) but are also suitable for other settings such as chemical-reaction networks. An associated diffusion control problem is presented and we show that the value function of the $N$-particle controlled system converges to the value function of the limit diffusion control problem as $N\to\infty$. The diffusion coefficient in the limit model is typically degenerate, however under suitable conditions there is an equivalent formulation in terms of a controlled diffusion with a uniformly non-degenerate diffusion coefficient. Using this equivalence, we show that near optimal continuous feedback controls exist for the diffusion control problem. We then construct near asymptotically optimal control policies for the $N$-particle system based on such continuous feedback controls. Results from some numerical experiments are presented.

\noi {\bf AMS 2000 subject classifications:} Primary 60K35, 60H30, 93E20; secondary 60J28, 60J70, 60K25, 91B70.

\noi {\bf Keywords:} Mean field approximations, diffusion approximations, asymptotic optimality, stochastic networks, stochastic control, propagation of chaos, rate control, ad hoc wireless networks.
\end{abstract}

\end{frontmatter}

\section{Introduction}\label{introsec}

We study a pure jump, weakly interacting, Markovian particle system in which jump rates can be dynamically modulated by a controller. The stochastic system of interest describes the state evolution of a collection of $N$ particles where each particle's state takes values in a finite set $\XX$. By a weak interaction we mean that the jump rates for a typical particle depend on the states of the remaining particles through the empirical distribution of particle states. System dynamics will allow for multiple particles to change states simultaneously, but there will be a fixed finite number of jump types. Such jump-Markov processes have been proposed as models for ad hoc wireless networks \cite{antunes2008stochastic} of the following form. Consider a system of $N$ finite capacity servers (particles/nodes). Jobs of $K$ different types, each with their own capacity requirement, arrive at each node at rate $\lambda_k, k=1,\ldots,K$ and are admitted if there is enough available capacity. All the jobs in the system of type $k$ have exponential residence time with mean $\tau^{-1}_k$. After an exponential holding time with mean $\gamma^{-1}_k$ a job of type $k$ will attempt to switch to another server which is chosen uniformly at random, and is admitted if there is available capacity, otherwise the job is lost. The state of a particle describes the number of various types of jobs being processed at the server. Under conditions, by classical results, the stochastic process of particle state empirical measures converges to the solution of a $d$-dimensional ordinary differential equation (ODE) (cf. \cite{kurtz1970solutions}), where $d=|\XX|$. This ODE captures the nominal behavior of the system over time as $N$ becomes large.

Taking a different perspective, the analysis of such ODE is a natural starting point for system design. By studying the mapping between system parameters and solution sets of the ODE one can identify parameter values that lead to desirable system behavior over time, at least in the law of large number limit as determined by the solution of the ODE. However, even when the system has been designed to reproduce a certain targeted nominal behavior the actual stochastic process of interacting particles may deviate significantly from the behavior determined by the ODE. It then becomes of interest to study dynamic control algorithms that modulate controllable system parameters to nudge the stochastic process closer to its desired nominal behavior. In general, adjusting system parameters incurs a cost and thus there is a trade off between this and the cost for deviating from the nominal behavior. A natural approach for analyzing this trade off is through an optimal stochastic control formulation where the controller seeks to minimize a suitable cost function which accounts for both types of costs noted above. 

The goal of this work is to develop a systematic stochastic control framework for studying optimal regulation of large, weakly interacting, pure jump Markov processes that arise from problems in communication networks. Since the jump rates in the system are of $\clo(N)$, and in a typical system $N$ is large, an exact analysis of this control problem becomes computationally intractable and thus one seeks a suitable approximate approach. The basic idea is to consider a sequence of networks indexed by $N$ such that the given physical system is embedded in this sequence for some fixed large value of $N$. A suitable asymptotic model, as $N\to\iy$, is used as a surrogate for the control problem in the $N$-th network. The asymptotic model taken here is based on diffusion approximations which give the limit behavior of fluctuations of the empirical measure process from its LLN limit. In an uncontrolled setting, such diffusion limits can be derived from classical martingale problem techniques \cite{kurtz1971limit,joffe1986weak} that are also the starting point here for developing an asymptotic framework for the study of the optimal stochastic control problem. Diffusion approximation methods have been used extensively in stochastic network theory, in particular they have been very useful in the study of critically loaded stochastic processing networks (see \cite{kushner2013heavy,harrison1988brownian,atar2014asymptotic, bellwill01, DaiLin, wwbook, AA2, BGL} and references therein). In this context, diffusion processes arise as approximations for a fixed number of centered renewal processes with rates approaching infinity. Limit theorems and the scaling regime considered in these works (number of nodes is fixed, traffic intensity approaches 1) is quite different from the one where the number of nodes (particles) approaches infinity that is considered here. In communication systems that motivate study of such interacting processes, jumps correspond to either an admission of a job to one of the $N$ nodes in the system, transfer of a job from one node to another node, or the completion/rejection of a job (and thus exit from the system). We consider a formulation in which controls can make ``small'' adjustments to the rate values in order to nudge the system toward its nominal state. Specifically, the overall rate of jumps in the system is $\clo(N)$ whereas the allowable rate controls will be $\clo(\sqrt{N})$. Although the magnitude of control becomes negligible compared to the overall rate as $N$ becomes large, in the diffusion scaling such a control can lead to an appreciable improvement in performance (see Section \ref{Ex1} for some numerical results). In the law of large numbers limit the controlled and uncontrolled systems both converge to the same nominal behavior as expected, but the diffusion limit of the two systems will in general differ in the drift coefficient. In particular, under suitable feedback controls the centered and normalized controlled process will converge to a diffusion with a nonlinear (in state) drift term whereas the uncontrolled process will converge to a time inhomogeneous Gauss-Markov process. In terms of cost, one can consider various types of criteria, but for simplicity we restrict ourselves to a finite time horizon cost where the running cost is a sum of two terms. The first term is  a continuous function, with at most polynomial growth, of the state of the centered and normalized empirical measure, and the second is a finite convex function of the (normalized) control.

Rather than attempting to look for an optimal control for the stochastic control problem for a fixed value of $N$, i.e. for the $N$-th system, we instead focus on  the more tractable goal of asymptotic optimality. More precisely, we are interested in constructing a sequence of control policies (indexed by $N$) such that the cost associated with the $N$-th system under the $N$-th control policy converges to the smallest possible value as $N\to\iy$. Analogous notions of asymptotic optimality are routinely used in heavy traffic analysis of queuing networks \cite{kushner2013heavy, harrison1988brownian,atar2014asymptotic, bellwill01, DaiLin, AA2,BGL}, but in the current work they are introduced in a very different asymptotic regime.  The key ingredient in the approach is to formulate and analyze a closely related stochastic control problem for diffusion processes. Roughly speaking, the state process in the diffusion control problem is the asymptotic analogue of the centered and normalized empirical measure process as $N\to\iy$. The control enters in the drift of the diffusion process whereas the diffusion coefficient is a non-random function of time. Our main result, Theorem \ref{thm:main}, shows that the diffusion control problem is a good approximation of the control problem for the $N$-th system, when $N$ is sufficiently large. Specifically, this theorem says that the value function associated with the control problem for the $N$-th system converges to the value function of the limit diffusion control problem. The key ingredients in the proof are Theorems \ref{thm:6}, \ref{thm:3}, and \ref{thm:5}. Theorem \ref{thm:6} gives the lower bound, namely it shows that the value function of the $N$-th system, asymptotically as $N\to\iy$, is bounded below by the value function of the diffusion control problem. The key steps in the proof are to establish suitable tightness properties of the sequence of scaled state and control processes and the characterization of the weak limit points. For the first step it is convenient to work with the relaxed control formulation (cf. \cite{kushner2013heavy, borkar1989optimal}) through which one can view controls as elements of a tractable Polish space. The second step proceeds via classical martingale problem techniques (cf. \cite{stroock2007multidimensional, ethier2009markov, joffe1986weak}).
%
%
%
Theorems \ref{thm:3} and \ref{thm:5} give the main steps needed for the complementary upper bound. For this bound, the main idea is to show that for any fixed $\eps>0$, there exists an $\eps$-optimal \textit{continuous} feedback control for the diffusion control problem (Theorem \ref{thm:5}), and that any such feedback control can be used to construct a sequence of control policies for the interacting particle system such that the associated costs converge to the cost under the feedback policy for the diffusion control problem (Theorem \ref{thm:3}). We begin, in Theorem \ref{thm:4}, by arguing  that for the diffusion control problem the infimum over all admissible controls is the same
as that over the class of feedback controls. Proof of this proceeds via certain conditioning arguments and PDE characterization results (cf. \cite{borkar1989optimal}) that allow the construction of a feedback control associated with any given admissible control such that the cost corresponding to the feedback control is no larger than that of the given admissible control. The result says that one can find an $\varepsilon$-optimal
control in the space of feedback controls. Although any such control corresponds to a natural collection of control policies for the sequence of $N$-particle systems, in order to prove the convergence of associated costs, which once more is based on martingale problem methods, we require additional regularity properties of the feedback control.  The key step is Theorem \ref{thm:5}
%
%
%
%
that shows that for any feedback control $g$ there exists a sequence of continuous feedback controls $\{g_n\}$ for the limit diffusion control problem such that the associated sequence of controlled diffusions converge weakly to the diffusion under the feedback control $g$. The proof requires some estimates based on an application of Girsanov's theorem which, in turn, relies on the non-degeneracy of the diffusion coefficient. Although the controlled diffusion that describes the asymptotic model is degenerate, we show that there is an equivalent formulation in terms of a ($d-1$)-dimensional controlled diffusion which is uniformly non-degenerate under suitable assumptions. This equivalent representation, in addition to providing a  feedback control of the desired form, is also key in proving weak uniqueness for stochastic differential equations (SDE) describing limit state processes associated with feedback controls.

In Section \ref{Ex1}, we will illustrate our approach through a numerical example. This example is 
the controlled analogue of a  model introduced in \cite{antunes2008stochastic}, and one can approach more general forms of this model along similar lines. The running cost function we consider is quadratic in the normalized state and control processes. The corresponding limit diffusion control problem in this case becomes the classical stochastic linear quadratic regulator (LQR) with time dependent coefficients (see \cite{fleming1976deterministic}). The optimal feedback control for the diffusion control problem can be given explicitly by solving a suitable Riccati equation. Our numerical results show that implementation of the control policy based on the optimal feedback control for the limit LQR to a system with $N=10,000$ leads to an improvement of up to 15.5\% on the cost for the uncontrolled system. A more detailed numerical analysis of the implementation of such diffusion approximation based control schemes will be presented elsewhere.

The paper is organized as follows. Section \ref{Main} presents the precise system of weakly interacting pure jump processes considered here. We will also present key assumptions and the main result of this work. Sections \ref{WeakJump} and \ref{ContSys} describe the uncontrolled and controlled systems, respectively. Assumptions which ensure convergence of the system to its fluid limit are introduced for both cases. Section \ref{ContSys} also introduces the cost criteria that is considered in this work. Section \ref{DiffCont} presents the diffusion control problem that formally corresponds to the limit as $N\to\iy$ of the control problem for the $N$-th system. The section also introduces the key non-degeneracy assumption (Condition \ref{con:8}) that is needed in order to obtain weak uniqueness of SDE with feedback controls and existence of near optimal continuous feedback controls. We also introduce our main assumptions on the controlled rate functions (Conditions \ref{con:6} and \ref{con:7}). In Section \ref{MainRes} we present our main result, namely Theorem \ref{thm:main}. Section \ref{Ex1} presents results from a numerical study. The remainder of this work is devoted to proof of Theorem \ref{thm:main}. In Section \ref{Tight} we present a key tightness result which is used both in the proof of the upper and lower bound. In Section \ref{LBnd} (see Theorem \ref{thm:6}) we prove the lower bound that was discussed earlier in the Introduction. In preparation for the proof of the upper bound, we introduce the class of feedback controls in Section \ref{Feedback}. Sections \ref{NetFeed} and \ref{DiffFeed} describe such controls for the prelimit system and the limit diffusion model, respectively. Section \ref{FeedConv} constructs a sequence of prelimit control policies from an arbitrary continuous feedback control for the diffusion control problem such that the cost for the particle systems under the sequence of control policies converges to the cost of the corresponding controlled diffusion. Finally in Section \ref{NearOptConCont}, we show that the infimum of the cost for the limit diffusion over all admissible controls is the same as that over the class of feedback controls and that there exist continuous feedback controls which are $\eps$-optimal. The results from sections \ref{LBnd}, \ref{Feedback}, and \ref{NearOptConCont}, (namely Theorems \ref{thm:6}, \ref{thm:3}, and \ref{thm:5}) together give our main result, Theorem \ref{thm:main}.

\subsection{Notation}\label{Notation}
The following notation will be used. 
We will use the notations $\{X_t\}$ and $\{X(t)\}$ interchangeably for stochastic processes.
The space of probability measures on a Polish space $\SSS$, equipped with the topology of weak convergence, will be denoted by $\clp(\SSS)$.  For $\SSS$ valued random variables $X$, $X_N$, $N\ge 1$, convergence in distribution of $X_N$ to $X$ as $N\to \infty$ will be denoted as $X_N \Rightarrow X$.
The Borel $\sigma$-field on a Polish space $\SSS$ will be denoted as $\clb(\SSS)$. 
The space of functions that are right continuous with left limits (RCLL) from $[0,T]$  to $\SSS$
will be denoted as $\DD([0,T]:\SSS)$  and equipped with the usual Skorohod topology.
Similarly $\CC([0,T]:\SSS)$  will be the space of continuous functions
from $[0,T]$  to $\SSS$,  equipped with the  uniform topology.

We will usually denote by $\kappa, \kappa_1, \kappa_2, \cdots$, the constants that appear in various estimates within a proof. The values of these constants may change from one proof to another. 
Cardinality of a finite set $A$ will be denoted as $|A|$. We will denote by $B(r)$  the $L^1$ ball of radius $r$ centered at the origin in some Euclidean space $\RR^d$. 
The Euclidean norm of a $d$-dimensional vector or a $d\times d$ matrix will be denoted as $\|\cdot\|$.
The linear span of a set $A \subset \RR^d$ will be denoted as $\text{Sp} A$.
The space of continuous (resp. continuous and bounded) functions from metric space $\SSS_1$ to $\SSS_2$ will be denoted as $\CC(\SSS_1:\SSS_2)$ (resp. $\CC_b(\SSS_1:\SSS_2)$). 
When $\SSS_2 = \RR$ we sometimes abbreviate this notation and write $\CC(\SSS_1)$ and $\CC_b(\SSS_1)$.
For a bounded function $f: \SSS \to \RR$,  $\|f\|_{\infty}\doteq \sup_{x \in \SSS} |f(x)|$.
The space of real valued continuous functions defined on $\RR^d$ whose first $k\in\NN$ (resp. all) derivatives exist and are continuous will be denoted $\CC^k(\RR^d)$ (resp. $\CC^{\iy}(\RR^d)$).  
We denote the subset of $\CC^k(\RR^d)$ of functions with compact support as $\CC_c^k(\RR^d)$.
Similarly $\CC^{1,2}([0,T]\times\RR^d)$ denotes the space of functions from $(0,T)\times\RR^d$ to $\RR$ that are once continuously differentiable  in the time coordinate, twice continuously differentiable in the space coordinate, and are such that the function and its derivatives can be continuously extended to $[0,T]\times\RR^d$. The space of $m\times n$ dimensional matrices whose entries take values in a set $\SSS$ will be denoted $\MM^{m\times n}(\SSS)$. For $M\in\MM^{m\times n}(\SSS)$, $M_{i,j}$ will the denote that entry of $M$ which is in the $i$-th row and $j$-th column. 
The transpose of a matrix $M$ will be denoted as $M'$ and trace of a square matrix $M$ will be denoted as $\Tr(M)$.
$\mathbf{1}$ and $I$ will denote the matrix of 1's and the identity matrix, respectively, the dimension of which will be context dependent.
For a Polish space $\SSS$ we denote by $\clm(\SSS)$ the space of all locally finite measures on $\SSS$. This space will be equipped with the usual vague topology, namely, the weakest topology such that for every $f\in\CC_b(\SSS)$ with compact support,
\begin{align*}
\nu\mapsto\int_\SSS f(u)\nu(du),\ \nu\in\clm(\SSS),
\end{align*}
is continuous.

\section{Problem Formulation and Main Results}\label{Main}
In this section we will describe the basic control problem of interest and give a precise mathematical formulation. We begin by introducing the uncontrolled pure jump Markov process in Section \ref{WeakJump} and recall a classical law of large numbers result for such systems. Section \ref{ContSys} will present the controlled system that we study and also our cost criteria. In Section \ref{DiffCont} we will introduce our main assumptions on the controlled rate matrices and based on these assumptions introduce a control problem for diffusion processes that can formally be regarded as the limit of control problems considered in Section \ref{ContSys}. Finally, in Section \ref{MainRes} we present our main result. This result says in particular that  a suitable near optimal diffusion control can be used to construct a sequence of control policies for the particle system in Section  \ref{ContSys} that are asymptotically near optimal. For a numerical example that illustrates the application of the result, we refer the reader to Section \ref{Ex1} where we present a model from communication networks that is a controlled version of some models introduced in \cite{antunes2008stochastic} and which falls within the framework considered here.

\subsection{Weakly Interacting Jump Markov Process}\label{WeakJump}
Fix $T \in (0, \infty)$.  All stochastic processes in this work will be considered on the time horizon $[0,T]$.
Consider a system of $N$ particles where the state of each particle takes values in the set $\XX=\{1,\ldots,d\}$. The evolution of the system is described by an $N$-dimensional pure jump Markov process $\mathbf{X}_N(t)=\{X_N^1(t),\ldots,X_N^N(t)\}$ where $X_N^i(t)$ represents the state of particle $i$ at time $t$. The system allows multiple particles to change state at a given time, but restricts such jumps to $K$ transition types; in particular the $k$-th transition type can only affect at most $n_k$ particles, $k\in\textbf{K}\doteq\{1,\ldots,K\}$. The jump intensity is state dependent, however the state dependence is of the following specific form: Denoting for $x\in\XX^N$, the probability measure $\{\frac{1}{N}\sum_{i=1}^NI_{\{x_i\}}(m)\}_{m\in\XX}$ on $\XX$ by $\{\zeta_N^m(x)\}_{m\in\XX}$, the jump intensity at the instant $t$ is a function of $\zeta_N(X_N(t))$. The set of jumps and the corresponding transition rates can be described in terms of the subset $\MM_N$ of $\MM^{d\times d}(\NN_0)$ consisting of all matrices with zeroes on the diagonal and  with sum of all entries at most  $N$, as follows. To any $k\in\mathbf{K}$ we associate a map $\Psi_N^k:\clp(\XX)\times\MM_N\to\RR_+$ such that for $x\in\XX^N,\ \Psi_N^k(\zeta_N(x),\Theta)=0$ if
\begin{align}\label{eqn:rateineq}
\sum_{i,j}\Theta_{i,j}>n_k \text{ or }\sum_{j=1}^d\Theta_{i,j}>N\zeta^i_N(x),\ i=1,\ldots,d.
\end{align}
Roughly speaking, $\Psi_N^k(\zeta_N(x),\Theta)$ will give the rate  of type $k$ jumps (associated with $\Theta$) when the system is in state $x\in\XX^N$. A type $k$ jump associated with $\Theta\in\MM_N$ corresponds to $\Theta_{ij}$ particles simultaneously jumping from state $i$ to state $j$, for all $i\neq j$ and $i,j=1,\ldots,d$. Thus the first inequality in \eqref{eqn:rateineq} says that at most $n_k$ particles change states under a jump of type $k$, while the second inequality says that a jump of type $k$ can occur only when there are enough particles to participate in it. In terms of $\Psi_N^k$ the overall rate of jumps of type $k$ associated with $\Theta$, when the system is in state $x\in\XX^N$, is given as
\begin{align*}
\Psi_N^k(\zeta_N(x),\Theta)\prod_{m=1}^d\binom{N\zeta^m_N(x)}{\sum_{j=1}^d\Theta_{m,j}}
\binom{\sum_{j=1}^d\Theta_{m,j}}{\Theta_{m,1},\ldots,\Theta_{m,d}}
\end{align*}
and such a jump takes a state $x\in\XX^N$ to a state $\ti x\in\XX^N$ where 
$$N\zeta_N^m(\ti x)=N\zeta_N^m(x)+\sum_{i=1}^d\Theta_{i,m}-\sum_{j=1}^d\Theta_{m,j},\; m=1,\ldots,d. $$

A more convenient description of this system is given through the pure jump Markov process $\{\mu_N(t)\}$ where $\mu_N(t)\doteq\zeta_N(X_N(t))$ represents the empirical measure of the particle states. We will identify the space of probability measures, $\clp(\XX)$, with the $d$-dimensional simplex, $\cls\doteq\{(x_1,\ldots,x_d)\in\RR_+^d|\sum_{i=1}^dx_i=1\}$. Similarly, we will identify $\clp_N(\XX)$, the space of all $\mu \in \clp(\XX)$ such that
$\mu\{j\} \in \frac{1}{N}\NN$ for all $j \in \XX$, with $\cls_N=\cls\cap\frac{1}{N}\NN^d$. Let, for $k\in\mathbf{K}$,
\begin{align*}
\Del^k
&\doteq \left\{(I,J)\in\NN_0^d\times\NN_0^d:\quad\sum_{x\in\XX}I_x=\sum_{x\in\XX}J_x\leq n_k,\quad\sum_{x\in\XX}|J_x-I_x|>0\right\},
\end{align*}
and for $\nu=(I,J)\in\Del^k$ let
\begin{align*}
\Phi(\nu)
= \Phi(I,J)
&\doteq\left\{\Theta\in\MM_N\Big|\sum_{j=1}^d\Theta_{i,j}=I_i,\sum_{i=1}^d\Theta_{i,j}=J_j,\; i,j=1, \cdots , d\right\}.
\end{align*}
The jumps of $\{\mu_N(t)\}$ are described as follows. For each $k\in\mathbf{K}$ and $\nu=(I,J)\in\Del^k$ the empirical measure jumps from $r\mapsto r+\frac{1}{N}e_\nu$ with rate
\begin{align*} 
\bar{\Gam}_N^k(r,\nu)\doteq\sum_{\Theta\in\Phi(\nu)}\Psi_N^k(r,\Theta)\prod_{m=1}^d\binom{Nr^m}{\sum_{j=1}^d\Theta_{m,j}}
\binom{\sum_{j=1}^d\Theta_{m,j}}{\Theta_{m,1},\ldots,\Theta_{m,d}}
\end{align*}
where $r = (r^m)_{m=1}^d \in \cls_N$, $e_\nu\doteq\sum_{x\in\XX}(J_x-I_x)e_x$ and $e_x$ is the unit vector in $\RR^d$ with 1 at the $x$-th coordinate and 0 everywhere else. Thus a jump associated with $k\in\mathbf{K}$ and $\nu\in\Del^k$ corresponds to $I_x$ particles in state $x,\ x\in\XX$, simultaneously jumping to new states such that $J_y$ of the particles end up in state $y,\ y\in\XX$. A succinct description of the evolution of the Markov process $\mu_N(t)$ is through its infinitesimal generator which is given as
\begin{align}\label{eqn:gen}
\bar{L}^Nf(r)=\sum_{k\in\mathbf{K}}\sum_{\nu\in\Del^k}\bar{\Gam}_N^{k}(r,\nu)\left[f\left(r+\frac{1}{N}e_\nu\right)-f(r)\right],\qquad r\in\cls_N.
\end{align}
We will make the following assumption on the asymptotic behavior of the rates.
\begin{condition}\label{con:1} For all $k\in\mathbf{K}$ and $\nu\in\Del^k$ there exists a Lipschitz  function $r\mapsto\Gam^k(r,\nu)$ on $\cls$ such that 
\begin{align}\label{eqn:ratecon}
\limsup_{N\to\iy}\sup_{r\in\cls_N}\left|\frac{1}{N}\bar{\Gam}^k_N(r,\nu)-\Gam^k(r,\nu)\right|=0
\end{align}
\end{condition}
We now present a classical law of large numbers result that characterizes the limit, $\mu(t)$, of the pure jump Markov process $\mu_N(t)$ as $N\to\iy$. For a proof we refer the reader to Theorem 2.11 of \cite{kurtz1970solutions}.
\begin{proposition}\label{prop:2}
Define,
\begin{align}\label{eqn:8}
F(r)\doteq\sum_{k\in\mathbf{K}}\sum_{\nu\in\Del^k}\Gam^k(r,\nu)e_\nu,\ r\in\cls.
\end{align}
Suppose that $\mu_N(0)\to\mu_0$ in probability and Condition \ref{con:1} holds, then $\mu_N(t)\to \mu(t)$ uniformly on $[0,T]$, in probability, where $\mu(t)$ is the unique solution of the ODE 
\begin{align}\label{eqn:ODE}
\dot{\mu}(t)=F(\mu(t)),\ \mu(0)=\mu_0.
\end{align}
\end{proposition}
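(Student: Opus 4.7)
The plan is to use the classical generator/martingale argument due to Kurtz. First, applying $\bar{L}^N$ componentwise to the identity map and using that each $(k,\nu)$-jump shifts $\mu_N$ by $\tfrac{1}{N}e_\nu$ produces the Doob--Meyer decomposition
\[
\mu_N(t) = \mu_N(0) + \int_0^t F_N(\mu_N(s))\,ds + M_N(t),
\]
where $F_N(r)\doteq\sum_{k\in\mathbf{K}}\sum_{\nu\in\Del^k}\tfrac{1}{N}\bar{\Gam}_N^k(r,\nu)e_\nu$ and $M_N$ is a vector of RCLL martingales started at $0$. By Condition \ref{con:1} and \eqref{eqn:8}, $F_N\to F$ uniformly on $\cls_N$ as $N\to\iy$, and $F$ is Lipschitz on $\cls$ since each $\Gam^k(\cdot,\nu)$ is.

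The second step is to bound the martingale via its predictable quadratic variation. Each jump of $\mu_N$ has magnitude at most $C_0/N$, where $C_0 \doteq 2\max_k n_k$, and $\tfrac{1}{N}\bar{\Gam}_N^k(r,\nu)$ is uniformly bounded in $N$ and $r\in\cls_N$ by Condition \ref{con:1} together with the compactness of $\cls$. Hence each component satisfies $\langle M_N^j\rangle_T \le \kappa/N$ for some $\kappa$ independent of $N$. Doob's $L^2$ inequality then yields $\EE[\sup_{t\le T}\|M_N(t)\|^2] \le \kappa'/N$, so $\sup_{t\le T}\|M_N(t)\|\to 0$ in probability.

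For the final step, subtract the ODE \eqref{eqn:ODE} from the decomposition and split $F_N(\mu_N(s))-F(\mu(s)) = [F_N(\mu_N(s))-F(\mu_N(s))] + [F(\mu_N(s))-F(\mu(s))]$ to get
\[
\|\mu_N(t)-\mu(t)\| \le \|\mu_N(0)-\mu_0\| + \eps_N + \sup_{s\le T}\|M_N(s)\| + L\int_0^t \|\mu_N(s)-\mu(s)\|\,ds,
\]
with $L$ a Lipschitz constant for $F$ and $\eps_N\doteq T\sup_{r\in\cls_N}\|F_N(r)-F(r)\|\to 0$ by \eqref{eqn:ratecon}. Gronwall's inequality then gives
\[
\sup_{t\le T}\|\mu_N(t)-\mu(t)\| \le \bigl(\|\mu_N(0)-\mu_0\|+\eps_N+\sup_{t\le T}\|M_N(t)\|\bigr)e^{LT},
\]
and the right-hand side tends to $0$ in probability by the hypothesis on $\mu_N(0)$ together with the martingale estimate. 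Uniqueness for the ODE \eqref{eqn:ODE} is immediate from Picard--Lindel\"of.

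The only point requiring care is the martingale estimate: although the rates $\bar{\Gam}_N^k$ are $\clo(N)$, the jump sizes are $\clo(1/N)$, so the accumulated quadratic variation is $\clo(1/N)$. This scale matching is what produces the LLN cancellation and is the essence of the Kurtz-type result.
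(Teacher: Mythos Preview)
Your argument is correct and is precisely the classical Kurtz-type proof. The paper itself does not give a proof of this proposition at all: it simply refers the reader to Theorem~2.11 of \cite{kurtz1970solutions}. So there is nothing to compare against beyond noting that you have reproduced the standard argument underlying that reference---the semimartingale decomposition of $\mu_N$, the $\clo(1/N)$ quadratic-variation bound coming from the balance between $\clo(N)$ rates and $\clo(1/N)$ jump sizes, and the Gronwall closure using the Lipschitz property of $F$.
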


\subsection{Controlled System}\label{ContSys}
In this work we will study a controlled version of the Markov process introduced in Section \ref{WeakJump}. Roughly speaking, control action will allow perturbations of the rate function $\bar{\Gamma}_N^k$ that are of $\clo\left(\frac{1}{\sqrt{N}}\right)$. The goal of the controller is to minimize a suitable finite time horizon cost. A precise mathematical formulation is as follows. Let 
\begin{align}\label{eqn:15}
\mathbf{\ell}\doteq\sum_{k\in \mathbf{K}}|\Del^k|,
\end{align}
$\Lambda$ be a compact convex subset of $\RR^\mathbf{\ell}$, and $\Lambda_N=\frac{1}{\sqrt{N}}\Lambda$ for $N\in\NN$. $\Lambda_N$ will be the control set in the $N$-th system. Let $\{\Gam_N^k(r,u,\nu):r\in\cls_N,u\in\Lambda_N,k\in \mathbf{K},\nu\in\Del^k\}$ be a collection of non-negative real numbers. More precisely, $(r,u)\mapsto \Gam_N^k(r,u,\nu)$ is a map from $\cls_N\times \Lambda_N$ to $\RR_+$ for each $N\in\NN,\ k\in\mathbf{K},\ \nu\in\Del^k$. These correspond to the controlled rates in the $N$-th system. We now introduce the controlled stochastic processes associated with such controlled rates. 

Fix $N\in\NN$ and let $(\Om^N,\clf^N,\PP^N)$ be a probability space on which are defined unit rate mutually independent Poisson processes $\{\mathcal{N}_{k,\nu},\ k\in\mathbf{K},\ \nu\in\Del^k\}$. The processes $\{\mathcal{N}_{k,\nu}\}$ will be used to describe the stream of jumps corresponding to $k\in\mathbf{K},\ \nu\in\Del^k$. Let $U^N$ be a $\Lambda_N$-valued measurable process representing the rate control in the system. Under control $U^N$ the state process $\mu_N(\cdot)$ is given by the following equation:
\begin{align}\label{eqn:stpro}
\mu_N(t)
&= \mu_N(0)+\frac{1}{N}\sum_{k\in\mathbf{K}}\sum_{\nu\in\Del^k}e_\nu \mathcal{N}_{k,\nu}\left(\int_0^t\Gam_N^k(\mu_N(s),U^N(s),\nu)ds\right).
\end{align}
In order for such a control to be admissible it should satisfy suitable non-anticipative properties.
More precisely, $U^N$ is said to be an admissible control if, with some filtration $\{\clf^N_t\}$ on $(\Om^N,\clf^N,\PP^N)$, $U^N$ is $\{\clf^N_t\}$-progressively measurable, $\mu_N$ is $\{\clf^N_t\}$-adapted, and $\{M^N_{k,\nu},k\in\mathbf{K},\nu\in\Del^k\}$ defined below are $\{\clf^N_t\}$-martingales
\begin{align}\label{eqn:mart}
M_{k,\nu}^N(t)\doteq \frac{1}{N}\left(\mathcal{N}_{k,\nu}\left(\int_0^t\Gam_N^k(\mu_N(s),U^N(s),\nu)ds\right)-\int_0^t\Gam_N^k(\mu_N(s),U^N(s),\nu)ds\right)
\end{align}
with quadratic variation processes $\lan M^N_{k,\nu},M^N_{k',\nu'}\ran_t=\del_{(k,\nu),(k',\nu')}\frac{1}{N^2}\int_0^t\Gam_N^k(\mu_N(s),U^N(s),\nu)ds$ where $\del_{\alpha,\alpha'}$ equals 1 if $\alpha=\alpha'$ and 0 otherwise. We note that in general such a filtration will depend on the control. We denote the set of all such admissible controls as $\cla_N$. 

For a $U^N\in\cla_N$, define the process 
\begin{align}
V_N(s)=\sqrt{N}(\mu_N(s)-\mu(s))\label{eqn:veen}
\end{align}
where, as above, $\mu_N$ is the state process under control $U^N$. We consider a cost that is a function of the suitably normalized control action and the centered and normalized state of the system given through the process $\{V_N(\cdot)\}$. Specifically, we consider for $N\in\NN,\ x_N\in\cls_N$ a ``finite time horizon  cost'' associated with an admissible control $U^N\in\cla_N$ and initial condition $x_N$ as,
\begin{align}\label{eqn:costprelim}
J_N(U^N,v_N)\doteq\E\int_0^T   (k_1(V_N(s))+k_2(\sqrt{N}U^N(s)))ds
\end{align}
where $v_N=\sqrt{N}(x_N-\mu_0)$, $k_2\in \CC(\Lambda)$ is a nonnegative convex function, and $k_1\in\CC(\RR^d)$ is a nonnegative function with at most polynomial growth. I.e. there exists a $p>1$ and $C_{k_1}\in(0,\iy)$ such that $k_1(x)\leq C_{k_1}(1+\|x\|^p)$ for all $x\in\RR^d$. Define the corresponding value function to be
\begin{align*}
R_N(v_N)\doteq\inf_{U^N\in\cla_N}J_N(U^N,v_N).
\end{align*}

Computing an optimal control for the above problem for a given $N$ is, in general, challenging and computationally intensive. It is therefore of interest to consider approximate approaches. In the next section we introduce some conditions on the controlled rate matrices that will suggest a natural diffusion approximation for  this control problem.

\subsection{Diffusion Control Problem}\label{DiffCont}
We now introduce our main assumptions on the controlled rate matrices. The first two conditions make precise the requirement that controlled rates are $\clo\left(\frac{1}{\sqrt{N}}\right)$ perturbations of the nominal values given through $\{\Gamma^k,\ k\in\mathbf{K}\}$. In particular, the first condition will ensure that the controlled pure jump Markov process will converge to the same limit as the uncontrolled process $\mu_N$ in Section \ref{WeakJump} under the law of large number scaling.
\begin{condition}\label{con:3}With $\{\Gam^k(r,\nu),\ k\in\mathbf{K},\ \nu\in\Del^k,\ r\in\cls\}$ as in Condition \ref{con:1}
\begin{align}\label{eqn:contrateconv}
\limsup_{N\to\iy}\sup_{r\in\cls_N}\sup_{u\in \Lambda_N}\left|\frac{1}{N}\Gam^{k}_N(r,u,\nu)-\Gam^{k}(r,\nu)\right|=0.
\end{align}
\end{condition}

We next introduce a strengthening of Condition \ref{con:3} that will play a key role in the proof of tightness of the sequence $\{V_N\}$ of controlled state processes.
\begin{condition}\label{con:6} 
There exists a $C_1\in(0,\iy)$ such that for every $N\in\NN$
\begin{align}\label{eqn:tightcond}
\sup_{u\in \Lambda_N}\sup_{\xi\in\cls_N(y)}\sqrt{N}\left|\frac{1}{N}\Gam_N^{k}\left(\frac{1}{\sqrt{N}}y+\xi,u,\nu\right)-\Gam^{k}\left(\xi,\nu\right)\right|\leq C_1(1+\|y\|)
\end{align}
for all $k\in \mathbf{K},\ \nu\in\Del^k$, and $y\in B(2\sqrt{N})\subset\RR^d$ where $\cls_N(y)=\{\xi\in\cls:\frac{1}{\sqrt{N}}y+\xi\in\cls_N\}$.
\end{condition}
Taking $y=0$ in \eqref{eqn:tightcond} we see that Condition \ref{con:6} implies that there exists a $C_2\in(0,\iy)$ such that 
\begin{align}\label{eqn:1}
\sup_{N\geq 1}\sup_{r\in\cls_N}\sup_{u\in\Lambda_N}\frac{1}{N}\Gam^k_N(r,u,\nu)\leq C_2
\end{align}
for all $k\in\mathbf{K},\ \nu\in\Del^k$. Note also that Condition \ref{con:6} implies Condition \ref{con:3}.

The next condition will identify the drift term in our limit diffusion control problem.
Note that any $u\in\Lambda$ (or $\Lambda_N$) can be indexed by $k\in\mathbf{K}$ and $\nu\in\Del^k$ and we will denote the corresponding entry by $u_{k,\nu}$.
\begin{condition}\label{con:7} 
There exist, for each $k\in \mathbf{K}, \nu\in\Del^k$, bounded functions $h_1^k(\nu,\cdot):\cls\to\RR$ and $h_2^k(\nu,\cdot):\cls\to\RR^d$ such that for $u\in\Lambda$, $\xi\in\cls$, $y\in\RR^d$, with
\begin{align*}
H^k(y,\xi,u,\nu)\doteq h_1^k(\nu,\xi)u_{k,\nu}+h^k_2(\nu,\xi)\cdot y,
\end{align*}
we have for all compact $A\subset\RR^d$,
\begin{align}\label{eqn:betazero}
\limsup_{N\to\iy}\sup_{u\in\Lambda}\sup_{y\in A}\sup_{\xi\in\cls_N(y)}\left|\beta_k^N(y,\xi,u,\nu)\right|=0
\end{align}
where for $N\in\NN,k\in\mathbf{K}$, and $\nu\in\Del^k$, we define $\beta_k^N(\cdot,\cdot,\cdot,\nu):\RR^d\times\cls\times\Lambda\to\RR$ as
\begin{align*}
\beta_k^N(y,\xi,u,\nu)\doteq \sqrt{N}\left(\frac{1}{N}\Gam_N^k\left(\frac{1}{\sqrt{N}}y+\xi,\frac{1}{\sqrt{N}}u,\nu\right)-\Gam^k(\xi,\nu)\right)-H^k(y,\xi,u,\nu),
\end{align*}
if $\xi \in \cls_N(y)$ and $0$ otherwise.
\end{condition}

Define $\eta:[0,T]\times\RR^\mathbf{\ell}\to\RR^d$ and $\beta:[0,T]\to\RR^{d\times d}$ as
\begin{align}\label{eqn:16}
\eta(t,u)\doteq \sum_{k\in\mathbf{K}}\sum_{\nu\in\Del^k}\left(h_1^k(\nu,\mu(t))u_{k,\nu}\right)e_\nu \text{ and } \beta(t)\doteq\sum_{k\in\mathbf{K}}\sum_{\nu\in\Del^k}e_\nu[h_2^k(\nu,\mu(t))]'
\end{align}
Note that 
\begin{align}\label{eqn:Hdef}
\sum_{k\in\mathbf{K}}\sum_{\nu\in\Del^k}H^k(y,\mu(t),u,\nu)e_\nu=\eta(t,u)+\beta(t)y,\ t \in [0,T],y\in\RR^d.
\end{align}
Let $a:[0,T]\to\RR^{d\times d}$ be defined as
\begin{align*}
a(t) \doteq \sum_{k\in\mathbf{K}}\sum_{\nu\in\Del^k}(\Gam^k(\mu(t),\nu))e_\nu e_\nu'.
\end{align*}

The $d\times d$ matrix $a(t)$ will be the square of the diffusion coefficient for the limit controlled diffusion process. Note that $a(t)$ is a singular matrix since $e_\nu\cdot\mathbf{1}=0$ for all $k\in\mathbf{K}$ and $\nu\in\Del^k$. Let $Q=[q_1\ldots q_d],\ q_k\in\RR^d,$ be a $d\times d$ orthogonal matrix (i.e $QQ'=Q'Q=I$) such that $q_d=\frac{1}{\sqrt{d}}\mathbf{1}$. Then, in view of the above observation,
\begin{align}\label{eqn:alphadef}
Q'a(t)Q = \left(\begin{matrix}
\alpha(t) & 0\\
0 & 0
\end{matrix}\right)
\end{align}
where $\alpha(\cdot)$ is a Lipschitz, nonnegative definite, $(d-1)\times(d-1)$ matrix valued function. Let $\alpha^{1/2}(t)$ be the symmetric square root of $\alpha(t)$. Since $t\mapsto\alpha(t)$ is continuous so is $t\mapsto\alpha^{1/2}(t)$ (see e.g. \cite{chen1997continuity}). Define 
\begin{align}\label{eqn:7}
\sigma(t)\doteq Q
\left[
\begin{matrix}
\alpha^{1/2}(t) & 0\\
0 & 0\end{matrix}
\right]Q'.
\end{align}

The main goal of this paper is to show that  an optimal control problem for certain diffusion processes can be used to construct asymptotically near optimal control policies for the sequence of controlled systems in Section \ref{ContSys}. We now introduce this diffusion control problem. Let $(\Om,\clf,\PP,\{\clf_t\})$ be a filtered probability space with a $d$-dimensional $\{\clf_t\}$-Brownian motion $\{W_t\}$. We refer to $(\Om,\clf,\PP,\{\clf_t\},\{W_t\})$ as a system and denote it by $\Xi$. Denote the collection of $\clf_t$-progressively measurable, $\Lambda$ valued processes as $\cla(\Xi)$. This collection will represent the set of admissible controls for the diffusion control problem. The initial condition $v_0$ for our controlled diffusion process will lie in the set $\VV_{d-1}=\{x\in\RR^d|x\cdot\mathbf{1}=0\}$. For $U\in\cla(\Xi)$ and $v_0\in\VV_{d-1}$, let $V$ be the unique pathwise solution of
\begin{align}\label{eqn:6}
V(t)=v_0+\int_0^t\eta(s,U(s))ds+\int_0^t\beta(s)V(s)ds+\int_0^t\sigma(s)dW(s)
\end{align}
where $\eta,\beta$ are as introduced in \eqref{eqn:16} and $\sigma$ is as in \eqref{eqn:7}. Define the cost associated with $U\in\cla(\Xi)$ and $v_0\in\VV_{d-1}$ as
\begin{align}\label{eqn:costdiff}
J(U,v_0)\doteq\E\int_0^T   (k_1(V(s))+k_2(U(s)))ds.
\end{align}
The value function associated with the above diffusion control problem is
\begin{align*}
R(v_0)\doteq \inf_{\Xi}\inf_{U\in\cla(\Xi)}J(U,v_0),
\end{align*}
where the outside infimum is taken over all possible systems $\Xi$.

Although the matrix $\sigma(t)$ is singular for each $t$, the following condition will ensure that the dynamics of $V$ restricted to a certain $(d-1)$-dimensional subspace is non-degenerate.
\begin{condition}\label{con:8}
 There exists a $\Del^*\subset\cup_{k\in\mathbf{K}}\Del^k$ such that  $\text{Sp} \{e_\nu:\nu\in\Del^*\}$ equals $\VV_{d-1}$, and for every $\nu\in\Del^*$ there is a $k_\nu\in\mathbf{K}$ such that $\nu\in\Del^{k_\nu}$ and 
\begin{align*}
\kappa(T)\doteq\inf_{\nu\in\Del^*}\inf_{0\leq t\leq T}\Gam^{k_\nu}(\mu(t),\nu)>0.
\end{align*}
\end{condition}

The following lemma shows that under Condition \ref{con:8}, $\alpha$ is uniformly non-degenerate on compact sets. 
\begin{lemma}\label{lem:nondeg}
Under Condition \ref{con:8},  $\{\alpha(t):t\in[0,T]\}$ is a uniformly positive definite collection, namely, there exists a $C(T)\in(0,\iy)$ such that $x'\alpha(t)x\geq C(T)\|x\|^2$ for all $x\in\RR^{d-1}$ and $0\leq t\leq T$.
\end{lemma}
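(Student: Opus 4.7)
The plan is to reduce the claim to a uniform positivity bound for the quadratic form $y \mapsto y' a(t) y$ on the subspace $\VV_{d-1}$. First I would observe that for any $x \in \RR^{d-1}$, setting $\tilde x \doteq (x',0)' \in \RR^d$ and $y \doteq Q\tilde x$, one has $y \in \VV_{d-1}$ (since $y$ is orthogonal to the last column $q_d = \mathbf{1}/\sqrt d$ of $Q$), $\|y\| = \|\tilde x\| = \|x\|$, and by \eqref{eqn:alphadef},
\begin{align*}
x'\alpha(t)x \;=\; \tilde x' Q' a(t) Q \tilde x \;=\; y' a(t) y \;=\; \sum_{k\in\mathbf{K}}\sum_{\nu\in\Del^k} \Gam^k(\mu(t),\nu)\, (e_\nu\cdot y)^2.
\end{align*}
Conversely, given any $y \in \VV_{d-1}$, the vector $\tilde x \doteq Q' y$ has its last coordinate equal to $q_d \cdot y = 0$, so writing $\tilde x = (x',0)'$ we recover $x'\alpha(t)x = y'a(t)y$ and $\|x\|=\|y\|$. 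Hence it suffices to exhibit a constant $C(T)\in(0,\iy)$ such that $y'a(t)y \geq C(T)\|y\|^2$ for all $y \in \VV_{d-1}$ and $t \in [0,T]$.

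Next I would use Condition \ref{con:8} to discard all terms in the sum except those corresponding to $\nu \in \Del^*$ and $k = k_\nu$. Since each summand $\Gam^k(\mu(t),\nu)(e_\nu\cdot y)^2$ is nonnegative, this yields
\begin{align*}
y'a(t)y \;\geq\; \sum_{\nu\in\Del^*} \Gam^{k_\nu}(\mu(t),\nu)(e_\nu\cdot y)^2 \;\geq\; \kappa(T) \sum_{\nu\in\Del^*} (e_\nu\cdot y)^2,
\end{align*}
uniformly over $t\in[0,T]$, with $\kappa(T)>0$ by hypothesis.

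It then remains to prove that there exists $c_* > 0$ such that $\sum_{\nu\in\Del^*}(e_\nu\cdot y)^2 \geq c_* \|y\|^2$ for all $y \in \VV_{d-1}$. Consider the linear map $L: \VV_{d-1} \to \RR^{|\Del^*|}$ defined by $L(y) \doteq (e_\nu\cdot y)_{\nu\in\Del^*}$. If $L(y)=0$, then $y$ is orthogonal to every $e_\nu$ for $\nu\in\Del^*$, and hence to their linear span, which by Condition \ref{con:8} is exactly $\VV_{d-1}$; combined with $y\in\VV_{d-1}$, this forces $y=0$. Thus $L$ is injective, and since $\VV_{d-1}$ is finite-dimensional, the continuous function $y\mapsto \|L(y)\|^2/\|y\|^2$ attains its minimum on the unit sphere of $\VV_{d-1}$ at a strictly positive value $c_*$. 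Taking $C(T) \doteq \kappa(T) c_*$ then completes the proof.

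There is no substantial obstacle here: the argument is essentially linear-algebraic, with Condition \ref{con:8} providing exactly the two ingredients needed (a spanning family with rates uniformly bounded below). The only point requiring mild care is correctly identifying the image of the slice $\{(x',0)': x\in\RR^{d-1}\}$ under $Q$ as the subspace $\VV_{d-1}$, which is guaranteed by the choice $q_d = \mathbf{1}/\sqrt d$.
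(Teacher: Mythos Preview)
Your proposal is correct and follows essentially the same approach as the paper's proof: both reduce to bounding $y'a(t)y$ from below on $\VV_{d-1}$ via the orthogonal transformation $Q$, discard all but the $\nu\in\Del^*$ terms using the uniform rate bound $\kappa(T)$, and then use the spanning hypothesis to show $\sum_{\nu\in\Del^*}(e_\nu\cdot y)^2$ is positive definite on $\VV_{d-1}$. The paper packages this last step as $\xi'G\xi\ge C_G\|\xi\|^2$ with $G=\sum_{\nu\in\Del^*}e_\nu e_\nu'$, which is exactly your $c_*$ argument in matrix form.
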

\begin{proof}
We first show that the matrix $G=\sum_{\nu\in\Del^*}e_\nu e_\nu'$ satisfies, for some $C_G\in(0,\iy)$, 
\begin{align}\label{eqn:2}
\xi'G\xi\geq C_G\|\xi\|^2
\end{align}
for all $\xi\in\VV_{d-1}$. For this it satisfies to check that for any nonzero $\xi\in\VV_{d-1}$, $\xi'G\xi>0$.

Suppose for some nonzero $\xi\in\VV_{d-1}$, $\xi'G\xi=0$. Since $\xi'G\xi=\sum_{\xi\in\Del^*}|\xi\cdot e_\nu|^2$ and $\text{Sp}\{e_\nu:\nu\in\Del^*\}=\VV_{d-1}$, we must have $\xi\perp\VV_{d-1}$. But by assumption $\xi$ is a nonzero element of $\VV_{d-1}$ which is a contradiction. This proves \eqref{eqn:2}.

Now for $x\in\RR^{d-1}$, letting  $\hat{x}=\left(\begin{smallmatrix}x\\0\end{smallmatrix}\right)\in\RR^d$,
\begin{align*}
x'\alpha(t)x
&= \hat{x}'Q'a(t)Q\hat{x}
= (Q\hat{x})'a(t)(Q\hat{x}).
\end{align*}
Since $\mathbf{1}=\sqrt{d}q_d$ and $\hat x_d=0$,
\begin{align*}
Q\hat{x}\cdot\mathbf{1}=(q_1\hat{x}_1+\cdots+q_d\hat{x}_d)\cdot\mathbf{1}=(q_1x_1+\cdots+q_{d-1}x_{d-1})\cdot\mathbf{1}=0. 
\end{align*}
Thus $y=Q\hat{x}\in\VV_{d-1}$, and consequently for $t\in[0,T]$,
\begin{align*}
y'a(t)y
&= \sum_{k\in\mathbf{K}}\sum_{\nu\in\Del^k}(\Gam^k(\mu(t),\nu))y'e_\nu e_\nu'y\\
&\geq \sum_{\nu\in\Del^*}(\Gam^{k(\nu)}(\mu(t),\nu))y'e_\nu e_\nu'y
\geq \kappa(T)y'Gy
\geq \kappa(T)C_G\|y\|^2.
\end{align*}
Thus
\begin{align}\label{eqn:25}
x'\alpha(t)x
\geq \kappa(T)C_G\|Q\hat{x}\|^2=\kappa(T)C_G\|\hat{x}\|^2=\kappa(T)C_G\|x\|^2
\end{align}
and the result follows.
\end{proof}
Since $t\mapsto \alpha(t)$ is Lipschitz, it follows from Lemma \ref{lem:nondeg} that under Condition \ref{con:8}, $t\mapsto\alpha^{1/2}(t)$ is Lipschitz as well (see Theorem 5.2.2 in \cite{stroock2007multidimensional}). Note from \eqref{eqn:25}, that $x'\alpha^{1/2}(t)x\geq (K(T)C_G)^{1/2}\|x\|^2$ for all $x\in\RR^{d\times d}$ and $t\in[0,T]$. In particular 
\begin{align}\label{eqn:26}
\sup_{0\leq t\leq T}\|\alpha^{-1/2}(t)\|<\iy .
\end{align}

\subsection{Main Result}\label{MainRes}
We now present the main result of this work. In Section \ref{Feedback} we will show that for every measurable function $g:[0,T]\times\RR^d\to\Lambda$ there exists a system $\Xi$ and a $U_g\in\cla(\Xi)$ such that the corresponding controlled diffusion process is a (time inhomogeneous) Markov process with generator
\begin{align}\label{eqn:ggen}
\cll_g f(t,x) \doteq \nabla f(x)\cdot[\eta(t,g(t,x))+\beta(t)x]+\frac{1}{2}\Tr (\sigma(t)D^2f(x)\sigma'(t)),\quad f\in\CC^\iy_c(\RR^d)
\end{align}
where $\nabla$ and $D^2$ are the gradient and the Hessian operators, respectively. Furthermore, as we will describe in Section \ref{Feedback}, such a $g$ also defines a control $U^N_g$ in the $N$-th system, under which the state process $\mu_N^g$ is a time inhomogeneous Markov process (see \eqref{eqn:congen}). We refer to $U_g$ and $U^N_g$ as the feedback controls associated with $g$ for the diffusion control problem and the $N$-th controlled system, respectively. The following is the main result of this work. It says the following three things: (i) The value functions of the $N$-particle control problem converge to that of the diffusion control problem as $N\to\iy$; (ii) For every $\eps>0$, there exists a continuous $\eps$-optimal feedback control for the diffusion control problem;  (iii) A near optimal continuous feedback control for the diffusion control problem can be used to construct a sequence of asymptotically near optimal controls for the systems indexed by $N$.
\begin{theorem}\label{thm:main}
Suppose Conditions \ref{con:6}, \ref{con:7}, and \ref{con:8} hold. Let $x_N\in\cls_N$ be such that $v_N=\sqrt{N}(x_N-x_0)\to v_0$ as $N\to\iy$. Then
\begin{enumerate}
\item[(i)] $R_N(v_N)\to R(v_0)$ as $N\to\iy$.
\item[(ii)] For every $\eps>0$, there is a continuous $g_{\eps}:[0,T]\times\RR^d\to\Lambda$ 
such that
\begin{align*}
J(U_{g_{\eps}},v_0)\leq R(v_0)+\eps.
\end{align*}
\item[(iii)] For any continuous $g:[0,T]\times\RR^d\to\Lambda$, $J_N(U^N_g,v_N)\to J(U_g,v_0)$ as $N\to\iy$. In particular, with $g_{\eps}$ as in $(ii)$,
\begin{align*}
R(v_0)=\lim_{N\to\iy}R_N(v_N)\leq \lim_{N\to\iy}J_N(U^N_{g_{\eps}},v_N)\leq R(v_0)+\eps.
\end{align*}
\end{enumerate}
\end{theorem}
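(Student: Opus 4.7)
The plan is to stitch together the three main auxiliary results of the paper---Theorem \ref{thm:6} (lower bound), Theorem \ref{thm:5} (continuous feedback approximation), and Theorem \ref{thm:3} (cost convergence along continuous feedback)---together with Theorem \ref{thm:4} (reduction of the diffusion control problem to feedback controls). Parts (ii) and (iii) amount to a repackaging of these theorems, while part (i) follows by combining the lower bound of Theorem \ref{thm:6} with an upper bound extracted from (ii) and (iii).

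\textbf{Part (ii).} Fix $\eps>0$. First, apply Theorem \ref{thm:4} to produce a (possibly discontinuous) feedback control $g$ with $J(U_g,v_0)\leq R(v_0)+\eps/2$. Theorem \ref{thm:5} then supplies a sequence of continuous feedback controls $\{g_n\}$ such that the associated controlled diffusions $V^{g_n}$ converge weakly in $\CC([0,T]:\RR^d)$ to $V^g$. To upgrade this to convergence of costs, note that $k_2\circ g_n$ is uniformly bounded because $\Lambda$ is compact and $k_2$ is continuous, so the control-cost terms converge by continuous mapping. For the state-cost term, $k_1$ has polynomial growth, and standard Gronwall and Burkholder--Davis--Gundy estimates applied to \eqref{eqn:6}, using the boundedness of $h_1^k$, $h_2^k$ and the continuity of $\sigma$ on $[0,T]$, yield a uniform-in-$n$ bound on $\E\sup_{t\leq T}\|V^{g_n}(t)\|^q$ for any $q\geq 1$. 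This furnishes the uniform integrability needed to pass $k_1(V^{g_n}(\cdot))$ through weak convergence, so $J(U_{g_n},v_0)\to J(U_g,v_0)$. Setting $g_\eps:=g_n$ for $n$ large enough completes (ii).

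\textbf{Parts (iii) and (i).} Part (iii) is a direct invocation of Theorem \ref{thm:3}: it asserts precisely that for any continuous feedback $g$, the cost of the $N$-th controlled system under $U^N_g$ converges to the cost of the limit controlled diffusion under $U_g$. For part (i), Theorem \ref{thm:6} gives $\liminf_{N\to\iy}R_N(v_N)\geq R(v_0)$. For the matching upper bound, fix $\eps>0$ and let $g_\eps$ be the continuous feedback from (ii). Since $U^N_{g_\eps}\in\cla_N$ is admissible, $R_N(v_N)\leq J_N(U^N_{g_\eps},v_N)$ for every $N$; by part (iii),
\begin{align*}
\limsup_{N\to\iy}R_N(v_N)\leq \lim_{N\to\iy}J_N(U^N_{g_\eps},v_N)=J(U_{g_\eps},v_0)\leq R(v_0)+\eps.
\end{align*}
Letting $\eps\downarrow 0$ gives $\limsup_{N\to\iy} R_N(v_N)\leq R(v_0)$, which together with the lower bound proves (i). The displayed chain of inequalities in the ``in particular'' clause is then immediate from (i) combined with $\eps$-optimality of $g_\eps$.

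\textbf{Where the real work lies.} This theorem is essentially an assembly statement; the genuine technical content is housed in the auxiliary results. The hardest input is Theorem \ref{thm:5}, whose continuous-feedback approximation requires passing to the $(d-1)$-dimensional non-degenerate representation underwritten by Lemma \ref{lem:nondeg} and applying Girsanov's theorem to compare diffusions under different feedback controls. Theorem \ref{thm:3} in turn rests on the tightness estimates of Section \ref{Tight} and a martingale problem characterization of weak limit points. Within the assembly itself, the only nontrivial point is the uniform integrability step in part (ii) that lifts weak convergence of $V^{g_n}$ to convergence of the costs; this is where the polynomial growth of $k_1$ and the compactness of $\Lambda$ enter essentially.
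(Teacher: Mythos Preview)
Your assembly matches the paper's own proof: Theorem \ref{thm:6} for the lower bound, Theorem \ref{thm:3} for the first statement in (iii), Theorems \ref{thm:4} and \ref{thm:5} for (ii), and then the sandwich argument for (i) and the ``in particular'' clause. The paper's proof is in fact terser than yours---it simply cites Theorem \ref{thm:5} as establishing (ii) without spelling out the passage from weak convergence of $V^{g_n}$ to convergence of costs.

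One point in your part (ii) argument needs tightening. You write that the control-cost terms converge ``by continuous mapping,'' but the map in question, $v(\cdot)\mapsto \int_0^T k_2(g_n(s,v(s)))\,ds$, depends on $n$, and the limiting feedback $g^*$ need not be continuous, so the continuous mapping theorem does not apply directly to pass from $V^{g_n}\Rightarrow V^{g^*}$ to $\int_0^T k_2(g_n(s,V^{g_n}(s)))\,ds \to \int_0^T k_2(g^*(s,V^{g^*}(s)))\,ds$. The correct route is through the relaxed-control convergence established inside the proof of Theorem \ref{thm:5}: there it is shown that the occupation measures $m^n$ converge to $m$ with $m_s(dv\,du)=\delta_{\hat V(s)}(dv)\,\delta_{\hat g^*(s,\hat V(s))}(du)$, from which $\E\int_0^T\int_\Lambda k_2(u)\,m^n_s(du)\,ds \to \E\int_0^T k_2(g^*(s,V^{g^*}(s)))\,ds$ follows since $k_2$ is bounded and continuous on $\Lambda$. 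Your uniform-integrability argument for the $k_1$ term is fine as stated.
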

\begin{proof}
The above result will be proved in three parts. First in Theorem \ref{thm:6} we will show that for all $v_N,v_0$ as in the statement,
\begin{align*}
\liminf_{N\to\iy}R_N(v_N)\geq R(v_0).
\end{align*}
Next, Theorem \ref{thm:3} shows the first statement in $(iii)$.
Finally in Theorem \ref{thm:5} we 
prove part $(ii)$ of the theorem.

Combining the above results we see that for each $\eps>0$
\begin{align*}
\limsup_{N\to\iy}R_N(v_N)\leq \lim_{N\to\iy}J(U^N_{g_\eps},v_N)=J(U_{g_\eps},v_0)\leq R(v_0)+\eps.
\end{align*}
Since $\eps>0$ is arbitrary it follows immediately that $\limsup_{N\to\iy}R_N(v_N)\leq R(v_0)$
completing the proof of  part $(i)$ and also the second statement in $(iii)$.
\end{proof}
 
Proof of Theorems \ref{thm:6}, \ref{thm:3}, and \ref{thm:5} are given in Sections \ref{LBnd}, \ref{Feedback}, and \ref{NearOptConCont}, respectively. 
Section \ref{Ex1} of the paper will present an example that is a controlled analogue of systems introduced in \cite{antunes2008stochastic} as models for ad hoc wireless networks. We will verify Conditions \ref{con:6}-\ref{con:8} for this example and describe how results from Theorem \ref{thm:main} can be used to construct a sequence of asymptotically near optimal control policies.

\section{Tightness}\label{Tight}

In this section we prove a tightness result which will be needed in the proofs of Theorems \ref{thm:7} and \ref{thm:3}. For $u \in \Lambda_N$, $k \in \mathbf{K}$ and $\nu \in \Delta^k$, we extend the map
$r \to \Gamma_N^k(r,u,\nu)$ to all of $\RR^d$ by setting $\Gamma_N^k(r,u,\nu)=0$ if $r \not \in \cls_N$.

For $U^N\in\cla_N$ define $V_N$ by \eqref{eqn:veen}
where $\mu_N$ is the controlled Markov process corresponding to the system under control $U^N$ given as in \eqref{eqn:stpro}. Define $\gamma_N:[0,T]\times\RR^d\to\RR^d$ as $\gamma_N(t,x)\doteq \mu(t)+\frac{1}{\sqrt{N}}x$, for $x\in\RR^d,\ t\in[0,T]$ and for $\phi\in \CC^2(\RR^d),\ s\in[0,T],\ u\in\Lambda_N$, and $y\in \RR^d$ define
\begin{align}\label{eqn:10}
\mathcal{L}_u^N(\phi,s,y)
&\doteq \sum_{k\in\mathbf{K}}\sum_{\nu\in\Del^k}\Gam_N^{k}\left(\gamma_N(s,y),u,\nu\right)\left[\phi\left(y+\frac{1}{\sqrt{N}}e_\nu\right)-\phi(y)\right]-\sqrt{N}F(\mu(s))\nabla\phi(y).
\end{align}
For $i=1,\ldots,d$ define $\phi^i(y)\doteq y_i$ and denote the $i$-th coordinate of $e_\nu$ and $F$ by $e_\nu^i$ and $F^i$ respectively. Let
\begin{align*}
b_N^{i,u}(s,y)
&\doteq \mathcal{L}_u^N(\phi^i,s,y)
= \sum_{k\in\mathbf{K}}\sum_{\nu\in\Del^k}\Gam_N^{k}\left(\gamma_N(s,y),u,\nu\right)\frac{1}{\sqrt{N}}e_\nu^i-\sqrt{N}F^i(\mu(s))\\
&= \sqrt{N}\sum_{k\in\mathbf{K}}\sum_{\nu\in\Del^k}e_\nu^i\left(\frac{1}{N}\Gam_N^{k}\left(\gamma_N(s,y),u,\nu\right)-\Gam^k(\mu(s),\nu)\right)
\end{align*}
where the second equality follows from the definition of $F$ in Proposition \ref{prop:2}. Also, for $i,j=1,\ldots,d$ let,
\begin{align*}
a_N^{i,j,u}(s,y)
&\doteq \mathcal{L}_u^N(\phi^i\phi^j,s,y)-y_ib_N^{j,u}(s,y)-y_jb_N^{i,u}(s,y)\\
&= \sum_{k\in\mathbf{K}}\sum_{\nu\in\Del^k}\Gam_N^{k}\left(\gamma_N(s,y),u,\nu\right)\left(\frac{y_i}{\sqrt{N}}e_\nu^j+\frac{y_j}{\sqrt{N}}e_\nu^i+\frac{1}{N}e_\nu^ie_\nu^j\right)\\
&\qquad-y_i\sqrt{N}F^j(\mu(s))-y_j\sqrt{N}F^i(\mu(s))-y_ib_N^{j,u}(s,y)-y_jb_N^{i,u}(s,y)\\
&= \sum_{k\in\mathbf{K}}\sum_{\nu\in\Del^k}\Gam_N^{k,u}\left(\gamma_N(s,y)),u,\nu\right)\frac{1}{N}e_\nu^ie_\nu^j.
\end{align*}
We write $b_N^u=(b_N^{1,u},\ldots,b_N^{d,u})$ and $a_N^u=(a_N^{i,j,u})_{i,j=1,\ldots,d}$. 

Let
\begin{align}\label{eqn:11}
n_\mathbf{K}\doteq2\max_{k\in\mathbf{K}}n_k.
\end{align}
The following Lemma gives a key bound needed for tightness.
\begin{lemma}\label{H1}
Suppose Condition \ref{con:6} holds. Then there exists $C_3\in(0,\iy)$ such that for every $N\in \NN$ and $t\in [0,T]$ 
\begin{align*}
(\|b_N^{U^N(t)}(t,V_N(t))\|^2+\Tr (a_N^{U^N(t)}(t,V_N(t)))\leq C_3(1+\|V_N(t)\|^2)
\end{align*}
almost everywhere for every $U^N\in\cla_N$.
\end{lemma}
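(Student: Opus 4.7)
The plan is to bound $\|b_N^{U^N(t)}(t,V_N(t))\|^2$ and $\Tr(a_N^{U^N(t)}(t,V_N(t)))$ separately. The first bound will come essentially verbatim from Condition \ref{con:6}, while the second will follow from the consequence \eqref{eqn:1} of Condition \ref{con:6} together with the fact that $\|e_\nu\|$ is uniformly bounded.

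First I would check that Condition \ref{con:6} applies in the form we need. At time $t$, set $y = V_N(t)$ and $\xi = \mu(t)$. The crucial observation is that $\gamma_N(t, V_N(t)) = \mu(t) + \frac{1}{\sqrt{N}}V_N(t) = \mu_N(t) \in \cls_N$, so $\mu(t) \in \cls_N(V_N(t))$. Moreover, since $\mu_N(t), \mu(t) \in \cls$ are probability measures, the $L^1$ norm of their difference is at most $2$, so $V_N(t) \in B(2\sqrt{N})$. Finally, $U^N(t) \in \Lambda_N$ by admissibility. Thus Condition \ref{con:6} yields, for each $k \in \mathbf{K}$ and $\nu \in \Del^k$,
\begin{align*}
\sqrt{N}\left|\frac{1}{N}\Gam_N^k(\gamma_N(t,V_N(t)), U^N(t), \nu) - \Gam^k(\mu(t),\nu)\right| \leq C_1(1 + \|V_N(t)\|).
\end{align*}

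Next I would bound the drift. The formula for $b_N^{i,u}$ expresses each component as a finite sum over $(k,\nu)$ of the quantities just bounded, weighted by $e_\nu^i$. Since $e_\nu^i = J_i - I_i$ with $\sum_x I_x = \sum_x J_x \leq n_k$, we have $|e_\nu^i| \leq n_\mathbf{K}$, and the index set $\{(k,\nu): k\in\mathbf{K}, \nu\in\Del^k\}$ has cardinality $\mathbf{\ell}$. Summing gives $\|b_N^{U^N(t)}(t,V_N(t))\| \leq \kappa_1(1 + \|V_N(t)\|)$ for a constant $\kappa_1$ depending only on $C_1$, $\mathbf{\ell}$, $n_\mathbf{K}$, $d$, and squaring and using $(1+x)^2 \leq 2(1+x^2)$ produces the required quadratic bound.

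For the trace, I would use the closed-form expression
\begin{align*}
\Tr(a_N^{u}(s,y)) = \sum_{k\in\mathbf{K}}\sum_{\nu\in\Del^k}\frac{1}{N}\Gam_N^k(\gamma_N(s,y), u, \nu)\|e_\nu\|^2
\end{align*}
and apply \eqref{eqn:1} (which the text already deduces from Condition \ref{con:6}) to obtain a uniform bound by $C_2\mathbf{\ell}\, d\, n_\mathbf{K}^2$, which is a constant independent of $N, t, y, u$ and in particular bounded by a constant times $1 + \|V_N(t)\|^2$. Adding the two bounds gives the lemma. There is no real obstacle here; the only thing to be careful about is confirming the domain hypotheses of Condition \ref{con:6} so that the estimate it provides can be invoked pointwise along the trajectory $V_N$.
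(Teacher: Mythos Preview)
Your proposal is correct and follows essentially the same approach as the paper's proof: both bound the drift coordinatewise using Condition \ref{con:6} together with $|e_\nu^i|\le n_{\mathbf K}$ and $|\{(k,\nu)\}|=\mathbf{\ell}$, bound the trace by a constant via \eqref{eqn:1} and a uniform bound on $\|e_\nu\|^2$, and justify applicability by noting $V_N(t)\in B(2\sqrt N)$ and $\mu(t)\in\cls_N(V_N(t))$. The only cosmetic difference is that the paper states the domain verification at the end rather than the beginning.
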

\begin{proof}
It follows from \eqref{eqn:1} that for $y\in B(2\sqrt{N})$ such that $\mu(t)\in\cls_N(y)$, $u\in\Lambda_N$, and $i=1,\ldots,d$
\begin{align*}
a_N^{i,i,u}(t,y)
&= \sum_{k\in\mathbf{K}}\sum_{\nu\in\Del^k}\Gam_N^{k}\left(\gamma_N(t,y),u,\nu\right)\frac{1}{N}e_\nu^ie_\nu^i
\leq \sum_{k\in\mathbf{K}}\sum_{\nu\in\Del^k}C_2e_\nu^ie_\nu^i
\leq C_2\mathbf{\ell} n_\mathbf{K}^2,
\end{align*} 
and from Condition \ref{con:6}
\begin{align*}
b_N^{i,u}(t,y)^2
&= \left(\sum_{k\in\mathbf{K}}\sum_{\nu\in\Del^k}e_\nu^i\sqrt{N}\left(\frac{1}{N}\Gam_N^{k}\left(\gamma_N(t,y),u,\nu\right)-\Gam^k(\mu(t),\nu)\right)\right)^2\\
&\leq \left( \sum_{k\in\mathbf{K}}\sum_{\nu\in\Del^k}|e^i_\nu| C_1(1+\|y\|)\right)^2\\
&\leq ( C_1\mathbf{\ell}n_\mathbf{K}(1+\|y\|))^2\\
&\leq 2C_1^2\mathbf{\ell}^2n_\mathbf{K}^2(1+\|y\|^2).
\end{align*}
The result now follows on noting that $V_N(t)\in B(2\sqrt{N})$ and $\mu(t)\in\cls_N(V_N(t))$ a.s.
\end{proof}
For $N\geq 1$ and $\phi\in\CC^2(\RR^d)$, let $\psi_N\in\CC^{1,2}([0,T]\times\RR^d)$ be defined as
\begin{align*}
\psi_N(t,y)\doteq \phi(\sqrt{N}(y-\mu(t))),\ t\in[0,T],\ y\in\RR^d.
\end{align*}
Note that $\phi(x)=\psi_N(t,\gamma_N(t,x))$. Using \eqref{eqn:stpro} and Dynkin's formula,
\begin{align}\label{eqn:gen1}
\begin{split}
\phi(V_N(t))
&= \psi_N(t,\mu_N(t))\\
&\qquad= \psi_N(0,\mu_N(0))+\int_0^tL_{U^N(s)}^N\psi_N(s,\mu_N(s))ds+\int_0^t\frac{\partial}{\partial s}\psi_N(s,\mu_N(s))ds + M_t^{N,\phi}
\end{split}
\end{align}
where $M^{N,\phi}_t$ is a locally square-integrable martingale and for $u\in\Lambda_N,(s,r)\in[0,T]\times\RR^d$,
\begin{align*}
L_u^N\psi_N(s,r)
&\doteq \sum_{k\in\mathbf{K}}\sum_{\nu\in\Del^k}\Gam_N^{k}(r,u,\nu)\left[\psi_N\left(s,r+\frac{1}{N}e_\nu\right)-\psi_N(s,r)\right]\\
&=\sum_{k\in\mathbf{K}}\sum_{\nu\in\Del^k}\Gam_N^{k}(r,u,\nu)\left[\phi\left(\sqrt{N}(r-\mu(s))+\frac{1}{\sqrt{N}}e_\nu\right)-\phi(\sqrt{N}(r-\mu(s)))\right].
\end{align*}
Also, since $\dot{\mu}(t)=F(\mu(t))$,
\begin{align*}
\frac{\partial}{\partial s}\psi_N(s,r)
&= -\sqrt{N}F(\mu(s))\cdot\nabla\phi(\sqrt{N}(r-\mu(s))).
\end{align*}
This shows that the process $V_N$ is a $\cld$-semimartingale in the sense of Definition 3.1.1 of \cite{joffe1986weak} with increasing function $A(t)=t$ and the associated mapping
$\mathbf{L}^N: \CC^2(\RR^d)\times \RR^d \times [0,T] \times \Omega^N \to \RR$ (in the notation of \cite{joffe1986weak}) defined as
$$\mathbf{L}^N(\phi, y, t, \omega) \doteq \cll^N_{U^N(t,\omega)}(\phi, t,y),$$
where $\cll^N_u$ is defined as in \eqref{eqn:10}.
Furthermore,
$$\mathbf{b}^N_i(y,t,\omega) \doteq b^{i, U^N(t,\omega)}(t,y), \;\;
\mathbf{a}^N_{ij}(y,t,\omega) \doteq a_N^{i, U^N(t,\omega)}(t,y),$$
are the local coefficients of first and second order of the semimartingale $V_N$ in the sense of Definition 
3.1.2 of \cite{joffe1986weak}. In particular,
equation \eqref{eqn:gen1} combined with \eqref{eqn:10} implies that
\begin{align}\label{eqn:tightmart}
M_t^N \doteq V_N(t)-V_N(0)-\int_0^t \mathbf{b}^N(V_N(s),s,\omega)ds
\end{align}
is a $d$-dimensional locally square-integrable martingale. 
\begin{definition}
For $x\in\DD([0,T]:\RR^d)$ let $j_T(x)\doteq\sup_{0< t\leq T}\|x(t)-x(t-)\|$ be the maximum jump size of $x$. We say a tight collection of $\DD([0,T]:\RR^d)$-valued random variables $\{X_N\}_{N\in\NN}$ is $\CC$-tight if $j_T(X_N)\Rightarrow 0$.
\end{definition}
If $X_N,X$ are $\DD([0,T]:\RR^d)$-valued random variables and $X_N\Rightarrow X$ then $\PP(X\in\CC([0,T]:\RR^d))=1$ if and only if $\{X_N\}_{N\in\NN}$ is $\CC$-tight \cite{BillingsleyConv}. Using Lemma \ref{H1}, the following Proposition follows directly from Lemma 3.2.2 and Proposition 3.2.3 of \cite{joffe1986weak}.
\begin{proposition}\label{prop:3}
Suppose Condition \ref{con:6} holds. Define for $N\in\NN,\ V_N$ through \eqref{eqn:veen}, where $\mu_N$ is defined as in \eqref{eqn:stpro} for some $U^N\in\cla_N$. Suppose $V_N(0)=v_N\in\RR^d$ and $\sup_N\|v_N\|<\iy$. Then 
\begin{align*}
\sup_{N\geq 1}\E\sup_{0\le t\leq T}\|V_N(t)\|^2<\iy
\end{align*}
and the sequence $\{V_N\}_{N\geq 1}$ is a tight collection of $\DD([0,T]:\RR^d)$-valued random variables. Furthermore the sequence is $\CC$-tight.
\end{proposition}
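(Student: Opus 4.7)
The plan is to treat $V_N$ as a $\cld$-semimartingale with the drift and quadratic variation bounds supplied by Lemma \ref{H1}, and then invoke a Gronwall-type argument for the moment estimate followed by a standard Aldous tightness criterion. Concretely, by \eqref{eqn:tightmart}, we have the decomposition
\begin{align*}
V_N(t) = V_N(0) + \int_0^t \mathbf{b}^N(V_N(s),s,\omega)\,ds + M_t^N,
\end{align*}
where $M^N$ is a locally square-integrable martingale whose predictable quadratic variation has density (with respect to $ds$) equal to $\mathbf{a}^N(V_N(s),s,\omega)$.

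First I would establish the $L^2$-bound. Applying It\^o's formula to $\|V_N(t)\|^2$ (or equivalently using the expression for the generator acting on $\phi(y)=\|y\|^2$ via \eqref{eqn:gen1}), one obtains
\begin{align*}
\|V_N(t)\|^2 = \|V_N(0)\|^2 + \int_0^t\bigl(2V_N(s)\cdot \mathbf{b}^N(V_N(s),s,\omega) + \Tr \mathbf{a}^N(V_N(s),s,\omega)\bigr)ds + \tilde M^N_t
\end{align*}
for a suitable local martingale $\tilde M^N$. A localization argument together with Cauchy--Schwarz applied to $V_N\cdot \mathbf{b}^N$ and the bound from Lemma \ref{H1} yields, after applying Doob's inequality to $\tilde M^N$,
\begin{align*}
\E\sup_{0\leq s\leq t}\|V_N(s)\|^2 \leq \kappa_1\bigl(1+\|v_N\|^2\bigr) + \kappa_2\int_0^t \E\sup_{0\leq r\leq s}\|V_N(r)\|^2\,ds,
\end{align*}
where the constants are independent of $N$ since $\sup_N\|v_N\|<\infty$. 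Gronwall's lemma then gives the claimed uniform second moment bound.

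Next, for tightness in $\DD([0,T]:\RR^d)$, I would use the Aldous--Rebolledo criterion. Since marginals are tight by the moment bound, it suffices to show that for any sequence of $\{\clf^N_t\}$-stopping times $\tau_N\leq T$ and any $\delta_N\downarrow 0$, $V_N(\tau_N+\delta_N)-V_N(\tau_N)\to 0$ in probability. Using the semimartingale decomposition, the drift contribution is bounded in $L^1$ by $\delta_N \sup_{s\le T}\E\|\mathbf{b}^N(V_N(s),s,\omega)\|$, while the martingale contribution has second moment dominated by $\int_{\tau_N}^{\tau_N+\delta_N}\E\Tr\mathbf{a}^N(V_N(s),s,\omega)\,ds$; both quantities are $\clo(\delta_N)$ by Lemma \ref{H1} and the just-established moment bound. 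This gives tightness. This is exactly the content of Lemma 3.2.2 and Proposition 3.2.3 of \cite{joffe1986weak}, whose hypotheses are verified via Lemma \ref{H1}.

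Finally, for $\CC$-tightness, I would note that each jump of $\mu_N$ is of the form $\frac{1}{N}e_\nu$ with $\|e_\nu\|\leq n_{\mathbf{K}}$ by \eqref{eqn:11}, and hence each jump of $V_N=\sqrt{N}(\mu_N-\mu)$ has magnitude at most $n_{\mathbf{K}}/\sqrt{N}$. Thus $j_T(V_N)\leq n_{\mathbf{K}}/\sqrt{N}\to 0$ deterministically, which combined with tightness gives $\CC$-tightness. The main step is really Lemma \ref{H1}: once the linear-growth bound on the local characteristics is in hand, everything else is a direct application of standard semimartingale tightness results, so no serious obstacle remains.
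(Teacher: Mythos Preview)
Your proposal is correct and follows essentially the same route as the paper: both arguments identify $V_N$ as a $\cld$-semimartingale with local characteristics $\mathbf{b}^N,\mathbf{a}^N$, use the linear-growth bound from Lemma \ref{H1} to invoke Lemma 3.2.2 and Proposition 3.2.3 of \cite{joffe1986weak} for the moment estimate and Aldous tightness, and obtain $\CC$-tightness from the deterministic $\clo(N^{-1/2})$ jump-size bound. The paper simply cites those Joffe--M\'etivier results directly, whereas you spell out the Gronwall and Aldous steps they contain; the only cosmetic discrepancy is that the paper records the jump bound as $j_T(V_N)\le \ell^{1/2}n_{\mathbf{K}}/\sqrt{N}$ rather than $n_{\mathbf{K}}/\sqrt{N}$, but either bound tends to zero and suffices.
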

\begin{proof}
Since $\mathbf{b}^N$ and $\mathbf{a}^N$ are  the local coefficients of the semimartingale $V_N$, the moment bound is immediate from the properties of $b_N^u$ and $a_N^u$ established in Lemma \ref{H1} upon using Lemma 3.2.2 of \cite{joffe1986weak}. Using this moment bound and Lemma \ref{H1} once again, tightness follows from verifying Aldous' tightness criteria (c.f. Theorem 2.2.2 in \cite{joffe1986weak}) as in Proposition 3.2.3 of \cite{joffe1986weak}. Also note that $\{V_N\}$ is $\CC$-tight because $j_T(V_N)\leq\frac{1}{\sqrt{N}}\mathbf{\ell}^{1/2}n_{\mathbf{K}}$  where $\mathbf{\ell}$ and $n_{\mathbf{K}}$ are as in \eqref{eqn:15} and \eqref{eqn:11}, respectively.
\end{proof}

\begin{remark}
 Proposition \ref{prop:3} in particular says that under Condition \ref{con:6} $\mu_N$ converges to $\mu$ in $\DD([0,T]:\RR^d)$.
\end{remark}

\section{Lower Bound}\label{LBnd}
In this section we prove the following result.
\begin{theorem}\label{thm:6}
Suppose Conditions \ref{con:6}, \ref{con:7}, and \ref{con:8} hold. Let $v_n,v_0$ be as in the statement of Theorem \ref{thm:main}. Then
\begin{align*}
\liminf_{N\to\iy}R_N(v_N)\geq R(v_0).
\end{align*}

\end{theorem}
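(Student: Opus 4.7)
The plan is to use a standard tightness-plus-identification-of-limits argument in the relaxed control framework. For each $N$, choose a sequence $U^N \in \cla_N$ such that $J_N(U^N, v_N) \leq R_N(v_N) + 1/N$, and let $V_N$ be the corresponding scaled state process from \eqref{eqn:veen}. If $\liminf_N J_N(U^N,v_N)=\infty$ the bound is trivial, so assume the liminf is finite and pass to a subsequence along which $J_N(U^N,v_N)$ converges to this liminf. Encode each $U^N$ as a relaxed control, i.e.\ a random element $\rho_N$ of $\clr \doteq \{\rho\in\clm(\Lambda\times[0,T]): \rho(\Lambda\times[0,t])=t\}$ (a compact Polish space), defined by $\rho_N(A\times[0,t]) = \int_0^t I_A(\sqrt{N}U^N(s))\,ds$. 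Since $\Lambda$ is compact, $\{\rho_N\}$ is automatically tight, and Proposition \ref{prop:3} gives $\CC$-tightness of $\{V_N\}$. Extract a further subsequence along which $(V_N,\rho_N)\Rightarrow(V,\rho)$ in $\CC([0,T]:\RR^d)\times\clr$, with $V$ continuous.

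The central step is identifying the limit. Using \eqref{eqn:tightmart}, write $V_N(t)=v_N+\int_0^t \mathbf{b}^N(V_N(s),s,\omega)\,ds + M^N_t$, and decompose the drift using Condition \ref{con:7}:
\begin{align*}
\mathbf{b}^N(V_N(s),s,\omega) = \sum_{k,\nu} e_\nu\, H^k(V_N(s),\mu(s),\sqrt{N}U^N(s),\nu) + \sum_{k,\nu} e_\nu \beta_k^N(V_N(s),\mu(s),\sqrt{N}U^N(s),\nu).
\end{align*}
By \eqref{eqn:16} the first sum equals $\eta(s,\sqrt{N}U^N(s))+\beta(s)V_N(s)$, which in relaxed form integrates against $\rho_N$ to give $\int_0^t\!\!\int_\Lambda \eta(s,u)\rho_N(du\,ds)+\int_0^t \beta(s)V_N(s)\,ds$. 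The error term involving $\beta_k^N$ vanishes by \eqref{eqn:betazero} combined with the moment bound $\sup_N\E\sup_{t\leq T}\|V_N(t)\|^2<\infty$ from Proposition \ref{prop:3}, after localizing $V_N$ to a compact set (the deleted mass is controlled by a Markov inequality). For the martingale term $M^N$, Lemma \ref{H1} and the computation of $a_N^u$ show that the quadratic variation of $M^N$ converges to $\int_0^t a(s)\,ds$. Applying the martingale problem machinery of \cite{joffe1986weak,ethier2009markov} together with $\CC$-tightness, $M^N$ converges to a continuous martingale $M$ with $\langle M\rangle_t=\int_0^t a(s)\,ds$, so $M$ admits the representation $M_t=\int_0^t\sigma(s)\,dW_s$ on a possibly enlarged filtered probability space $(\Om,\clf,\PP,\{\clf_t\})$ carrying a Brownian motion $W$. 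Passing to the limit in the SDE yields
\begin{align*}
V(t)=v_0+\int_0^t\!\!\int_\Lambda \eta(s,u)\,\rho(du\,ds)+\int_0^t\beta(s)V(s)\,ds+\int_0^t\sigma(s)\,dW_s,
\end{align*}
where $\rho$ is $\{\clf_t\}$-predictable in a suitable sense. A standard measurable disintegration $\rho(du\,ds)=\rho_s(du)\,ds$ together with the convexity of $\Lambda$ allows us to define $U(s)\doteq\int_\Lambda u\,\rho_s(du)\in\Lambda$; since $\eta(s,\cdot)$ is linear in $u$, the process $U\in\cla(\Xi)$ produces the same dynamics as $\rho$, so $V$ solves \eqref{eqn:6} driven by $U$.

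Finally, pass the cost to the limit. By Skorohod representation we may assume a.s.\ convergence. Since $k_1$ has polynomial growth, the moment bound from Proposition \ref{prop:3} plus uniform integrability gives $\E\int_0^T k_1(V_N(s))\,ds\to\E\int_0^T k_1(V(s))\,ds$. For the control cost, by convexity of $k_2$ and Jensen's inequality, $\int_\Lambda k_2(u)\rho_s(du)\geq k_2(U(s))$, and the weak convergence $\rho_N\Rightarrow\rho$ combined with lower semicontinuity yields $\liminf_N \E\int_0^T k_2(\sqrt{N}U^N(s))\,ds \geq \E\int_0^T\!\!\int_\Lambda k_2(u)\rho_s(du)\,ds \geq \E\int_0^T k_2(U(s))\,ds$. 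Combining,
\begin{align*}
\liminf_{N\to\infty}R_N(v_N) = \liminf_{N\to\infty}J_N(U^N,v_N) \geq J(U,v_0)\geq R(v_0).
\end{align*}

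The main obstacle I anticipate is the clean identification of the limiting martingale problem, specifically controlling the error term involving $\beta_k^N$ uniformly over the (a priori unbounded) state process $V_N$ — this requires a careful truncation argument exploiting the $L^2$ bound from Proposition \ref{prop:3}, since \eqref{eqn:betazero} only provides uniform convergence of $\beta_k^N$ on compact sets. The conversion from relaxed to ordinary control via the barycenter is routine given the linearity of $\eta(s,\cdot)$ and the convexity of $\Lambda$ and $k_2$; Condition \ref{con:8} is not directly needed for the lower bound but provides the weak uniqueness that justifies working with a single limit point.
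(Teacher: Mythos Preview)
Your approach is essentially the same as the paper's: encode controls as relaxed controls, establish tightness of $(V_N, M^N, m^N)$ and vanishing of the $\beta_k^N$ error term, identify the limiting SDE via the martingale problem, and pass the cost to the limit using Jensen's inequality on $k_2$. The paper packages the tightness and limit identification into a separate result (Theorem~\ref{thm:7}) and then applies it, but the content is the same.

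Two points are worth noting. First, your claim that $\E\int_0^T k_1(V_N(s))\,ds \to \E\int_0^T k_1(V(s))\,ds$ via uniform integrability is not justified: Proposition~\ref{prop:3} only yields $\sup_N \E\sup_t\|V_N(t)\|^2<\infty$, whereas $k_1$ is allowed growth of order $p$ for arbitrary $p>1$. The paper instead uses Fatou's lemma (valid since $k_1\ge 0$) to get only the inequality $\liminf_N \E\int_0^T k_1(V_N)\,ds \ge \E\int_0^T k_1(V)\,ds$, which is all that is needed for the lower bound. Your argument survives once you replace ``converges'' by ``$\liminf \ge$''.

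Second, your remark that Condition~\ref{con:8} is not directly needed is interesting: the paper's construction of the limiting Brownian motion explicitly inverts $\alpha^{1/2}$ (hence uses Condition~\ref{con:8} via Lemma~\ref{lem:nondeg} and \eqref{eqn:26}), whereas you appeal to an abstract martingale representation theorem on an enlarged space. Your route is legitimate, but you must then be careful that the relaxed control $\rho$ (and hence the barycenter $U$) is progressively measurable with respect to a filtration in which the constructed $W$ is a Brownian motion; the paper handles this by building the filtration $\{\clg_t\}$ explicitly from $(V,M,m)$ together with the auxiliary one-dimensional Brownian motion $B_d$, and verifying the martingale property against test functionals of all three components. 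Your phrase ``$\rho$ is $\{\clf_t\}$-predictable in a suitable sense'' hides exactly this verification.
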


We first note that the local martingale $M^N$ in \eqref{eqn:tightmart} takes the following explicit form.
\begin{align}\label{eqn:27}
M^N(t)=\sqrt{N}\sum_{k\in\mathbf{K}}\sum_{\nu\in\Del^k}e_\nu M_{k,\nu}^N(t),\; t \in [0,T],
\end{align}
where $M^N_{k,\nu}$ is as defined in \eqref{eqn:mart}. Indeed, denoting the right side
of \eqref{eqn:27} as $\tilde M^N(t)$ and using \eqref{eqn:stpro} we can write,
\begin{align*}
\mu_N(t)=\mu_N(0)+\frac{1}{N}\sum_{k\in\mathbf{K}}\sum_{\nu\in\Del^k}e_\nu\int_0^t\Gam_N^k(\mu_N(s),U^N(s),\nu)ds+\frac{1}{\sqrt{N}}\tilde M^N(t).
\end{align*}
From this and recalling the definition of $\mu$ from \eqref{eqn:stpro} and of $H^k$ from Condition \ref{con:7}, we have
the following representation for $V_N$ in terms of $\tilde M^N$ 
\begin{align}\label{eqn:vmart}
\begin{split}
V_N(t)
&= \sqrt{N}(\mu_N(t)-\mu(t))\\
&= v_N+\sum_{k\in\mathbf{K}}\sum_{\nu\in\Del^k}e_\nu\int_0^t\sqrt{N}\left(\frac{1}{N}\Gam_N^k(\mu_N(s),U^N(s),\nu)-\Gam^k(\mu(s),\nu)\right)ds+\tilde M^N(t)\\
&= v_N+\sum_{k\in\mathbf{K}}\sum_{\nu\in\Del^k}e_\nu\int_0^tH^k(V_N(s),\mu(s),\sqrt{N}U^N(s),\nu)ds+\int_0^t\vartheta^N(s)ds+\tilde M^N(t)
\end{split}
\end{align}
where the error term $\vartheta^N$ is given as
\begin{align*}
\vartheta^N(s)=\sum_{k\in\mathbf{K}}\sum_{\nu\in\Del^k}\vartheta^N_{k,\nu}(s),\;\;
\vartheta^N_{k,\nu}(s)= e_\nu\beta_k^N(V_N(s),\mu(s),\sqrt{N}U^N(s),\nu),
\end{align*}
and $\beta_k^N$ is as in Condition \ref{con:7}. 
This proves \eqref{eqn:27}.

Note that $\vartheta^N$ can be estimated as
\begin{align}\label{eqn:13}
\|\vartheta^N(s)\|\leq \theta^N(V_N(s)).
\end{align}
where for $y\in\RR^d$
\begin{align*}
\theta^N(y)\doteq (\mathbf{\ell})^{1/2} n_{\mathbf{K}}\sup_{\xi\in\cls_N(y)}\sup_{u\in\Lambda}\sum_{k\in\mathbf{K}}\sum_{\nu\in\Del^k}|\beta_k^N(y,\xi,u,\nu)|,
\end{align*}
with $\mathbf{\ell}$ and $n_{\mathbf{K}}$ as in \eqref{eqn:15} and \eqref{eqn:11}, respectively.
Condition \ref{con:7} then implies
\begin{align}\label{eqn:14}
\sup_{y\in A}\theta^N(y)&\to0,\text{ as }N\to\iy
\end{align}
for all compact $A$. The above estimate will allow us to estimate the error term $\vartheta^N$ in \eqref{eqn:vmart}.

In order to have suitable tightness properties of the control processes it will be convenient to introduce the following collection of random measures. Define $\clm([0,T]\times \Lambda)$ valued random variables
 $m^N$ as
\begin{align}\label{eqn:relcon}
m^N(A\times B)=\int_A1_B(\sqrt{N}U^N(s))ds.
\end{align}
Note that $m^N$ can be disintegrated as $m_s^N(du)ds$, where $m^N_s(du)=\del_{\sqrt{N}U^N(s)}(du)$ and $\del_x$ is the Dirac measure at the point $x$. Then for $s\in[0,T]$,
\begin{align*}
H^k(V_N(s),\mu(s),\sqrt{N}U^N(s),\nu)
&= \int_\Lambda h_1^k(\nu,\mu(s))u_{k,\nu}m_s^N(du)+ h_2^k(\nu,\mu(s))\cdot V_N(s).
\end{align*}
Thus the state equation \eqref{eqn:vmart} can be rewritten as
\begin{align}\label{eqn:vmart2}
\begin{split}
V_N(t)
&= v_N+\int_0^t\vartheta^N(s)ds+M^N(t) + \sum_{k\in\mathbf{K}}\sum_{\nu\in\Del^k}e_\nu\int_0^t\int_\Lambda h_1^k(\nu,\mu(s))u_{k,\nu}m_s^N(du)ds\\
&\qquad+\sum_{k\in\mathbf{K}}\sum_{\nu\in\Del^k}e_\nu\int_0^t h_2^k(\nu,\mu(s))\cdot V_N(s)ds.
\end{split}
\end{align}
Recall from Section \ref{Notation} that $\clm\left([0,T]\times\Lambda\right)$ is the space of all  finite measures on $[0,T]\times\Lambda$ equipped with the usual weak convergence topology. 
\begin{theorem}\label{thm:7}
Suppose Conditions \ref{con:6}, \ref{con:7}, and \ref{con:8} hold and let $v_N,v_0$ be as in Theorem \ref{thm:main}. Then:
\begin{enumerate}
\item[(i)] $Y^N=\{V_N,M^N,m^N,\int_0^\cdot\vartheta^N(s)ds\}_{N\geq 1}$ is a tight collection of $\DD([0,T]:\RR^{2d})\times\clm([0,T]\times\Lambda)\times\CC([0,T]:\RR^d)$ valued random variables.
\item[(ii)]  $\int_0^\cdot\vartheta^N(s)ds$ converges to 0 in probability in $\CC([0,T]:\RR^d)$.
\item[(iii)] $(V_N,M^N)_{N\geq 1}$ is $\CC$-tight.
\item[(iv)] Suppose $\{Y^N\}$ converges weakly along a subsequence to $Y=(V,M,m,0)$ defined on a probability space $(\Om^*,\clf^*,\PP^*)$. Then, $\PP^*$ a.s., the first marginal of $m$ is the Lebesgue measure on $[0,T]$. Disintegrating $m$ as 
\begin{align*}
m(A\times B)=\int_Am_t(B)dt,\quad A\in\clb([0,T]),\ B\in\clb(\Lambda),
\end{align*}
define
\begin{align}\label{eqn:control}
U_{k,\nu}(t)\doteq\int_\Lambda u_{k,\nu}m_t(du),\quad t\in [0,T],\ k\in\mathbf{K},\ \nu\in\Del^k.
\end{align}
Let $\{B_d(t)\}$ be a one dimensional standard Brownian motion given on $(\Om^*,\clf^*,\PP^*)$ that is independent of $Y$. Let $\clg_t^\circ=\sigma\{B_d(s),V(s),M(s),m([0,s]\times B):s\leq t,B\in\clb(\Lambda)\}$ and $\clg_t$ be the $\PP^*$-completion of $\clg_t^\circ$. Then there is a $d$-dimensional $\{\clg_t\}$-Brownian motion $\{W(t)\}, W=(W_1,\ldots,W_d)$ such that the following equation is satisfied
\begin{align}\label{eqn:9}
\begin{split}
V(t)&= v_0+\int_0^t\sigma(s)dW(s) +\sum_{k\in\mathbf{K}}\sum_{\nu\in\Del^k}e_\nu\int_0^t\int_\Lambda h_1^k(\nu,\mu(s))U_{k,\nu}(s)ds\\
&\qquad+\sum_{k\in\mathbf{K}}\sum_{\nu\in\Del^k}e_\nu\int_0^t h_2^k(\nu,\mu(s))\cdot V(s)ds\\
&= v_0 +\int_0^t\eta(s,U(s))ds+\int_0^t\beta(s)V(s)ds+\int_0^t\sigma(s)dW(s).
\end{split}
\end{align}
\end{enumerate}
\end{theorem}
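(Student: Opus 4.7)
For parts (i)--(iii) the ingredients are already in place. Tightness and $\CC$-tightness of $\{V_N\}$ come from Proposition \ref{prop:3}. The $\{m^N\}$ live on the compact space $[0,T]\times\Lambda$ with deterministic total mass $T$, so they are trivially tight in $\clm([0,T]\times\Lambda)$. For $\{\int_0^\cdot\vartheta^N(s)\,ds\}$, Condition \ref{con:6} and the boundedness of the $h_i^k$ from Condition \ref{con:7} yield the pointwise bound $\|\vartheta^N(s)\|\le C(1+\|V_N(s)\|)$, which combined with the $L^2$ moment bound of Proposition \ref{prop:3} gives a Kolmogorov-type estimate implying tightness in $\CC([0,T]:\RR^d)$. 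Finally $\{M^N\}$ is tight because \eqref{eqn:vmart2} expresses it as $V_N$ minus continuous functionals of the other pieces.

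For (ii) I would localize: given $\eta>0$, pick $R$ so that $\PP(\sup_{t\le T}\|V_N(t)\|>R)<\eta$ uniformly in $N$ via the moment bound. On the complement, $\|\vartheta^N(s)\|\le\sup_{y\in B(R)}\theta^N(y)$, which vanishes as $N\to\iy$ by \eqref{eqn:14}; hence $\sup_{t\le T}\|\int_0^t\vartheta^N(s)\,ds\|\to 0$ in probability. Part (iii) is then immediate from \eqref{eqn:vmart2}: $V_N$ is $\CC$-tight, the $\vartheta^N$-integral is now asymptotically continuous, and the remaining summands are already continuous, so $(V_N,M^N)$ is $\CC$-tight.

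For (iv) the first marginal of each $m^N$ is deterministically Lebesgue measure and this passes to the weak limit. Using a Skorohod representation I would pass to the limit in \eqref{eqn:vmart2}: uniform convergence $V_N\to V$ (valid since the limit is continuous) handles the linear-in-$V_N$ drift, and the bounded continuous integrand $(s,u)\mapsto h_1^k(\nu,\mu(s))u_{k,\nu}$ on the compact set $[0,T]\times\Lambda$, paired with the disintegration \eqref{eqn:control}, sends the $m^N$-integral to $\int_0^t h_1^k(\nu,\mu(s))U_{k,\nu}(s)\,ds$. Combined with (ii) this gives $V=v_0+M+\int_0^\cdot\eta(s,U(s))\,ds+\int_0^\cdot\beta(s)V(s)\,ds$.

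The main obstacle is identifying $M$ with $\int_0^\cdot\sigma(s)\,dW(s)$. From \eqref{eqn:27} and the quadratic-variation formulas for the $M^N_{k,\nu}$ one computes $\langle M^N_i,M^N_j\rangle_t=\sum_{k,\nu}e_\nu^i e_\nu^j\int_0^t\frac{1}{N}\Gam_N^k(\mu_N(s),U^N(s),\nu)\,ds$; by Condition \ref{con:3}, uniform convergence $\mu_N\to\mu$ (the Remark after Proposition \ref{prop:3}), and Lipschitz continuity of $\Gam^k$, this tends to $\int_0^t a(s)\,ds$. A standard martingale-problem argument---pass the identities $\E[(M^N(t)-M^N(s))\Phi]=0$ and the analogous ones for the quadratic covariation, for bounded continuous test functionals $\Phi$ of the past, using the $L^2$-uniform integrability coming from Lemma \ref{H1} and Proposition \ref{prop:3}---shows $M$ is a continuous $\{\clg_t\}$-martingale with $\langle M\rangle_t=\int_0^t\sigma(s)\sigma(s)'\,ds$. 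Since $\sigma$ has rank $d-1$ (the vector $\mathbf{1}$ lies in its kernel; indeed $\mathbf{1}'M^N\equiv 0$ because $\mathbf{1}\cdot e_\nu=0$, so $\mathbf{1}'M\equiv 0$), a Brownian representation on the original space is unavailable and the independent $B_d$ supplies the missing coordinate. Rotating by $Q$, the first $d-1$ components of $Q'M$ have non-degenerate quadratic variation $\int_0^t\alpha(s)\,ds$, the last component vanishes, and $W^\sharp(s)\doteq\int_0^s\alpha^{-1/2}(r)\,d(Q'M)_{1:d-1}(r)$ defines a $(d-1)$-dimensional $\{\clg_t\}$-Brownian motion by L\'evy's characterization (with $\alpha^{-1/2}$ bounded by \eqref{eqn:26}). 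Setting $W\doteq Q(W^\sharp,B_d)'$ yields a $d$-dimensional $\{\clg_t\}$-Brownian motion, and using $(Q'M)_d\equiv 0$ a direct computation gives $\int_0^t\sigma(s)\,dW(s)=M(t)$, completing \eqref{eqn:9}.
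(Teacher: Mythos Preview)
Your argument is correct and follows essentially the same route as the paper, with two organizational differences worth noting. First, for tightness of $\{M^N\}$ the paper invokes Rebolledo's theorem (Theorem~2.3.2 of \cite{joffe1986weak}) directly on the predictable quadratic variation, whereas you back it out of \eqref{eqn:vmart2} once the other pieces are tight; both work, and the paper obtains $\CC$-tightness of $M^N$ from the explicit jump bound $j_T(M^N)\le N^{-1/2}\ell^{1/2}n_{\mathbf{K}}$ rather than from the equation. Second, and more interestingly, in part~(iv) the paper defines the approximate Brownian motion at the \emph{prelimit} level, setting $B^N(t)=\int_0^t\alpha^{-1/2}(s)\,d\hat M^N(s)$ and showing $\langle B^N_i,B^N_j\rangle_t=t\delta_{ij}+\eps^N_{ij}(t)$ with $\eps^N\to 0$, then passing to the limit; you instead first identify $\langle M\rangle_t=\int_0^t a(s)\,ds$ at the limit and only then construct $W^\sharp$ from $M$. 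Your approach is slightly more direct and avoids the stochastic-integral convergence result (Theorem~2.2 of \cite{kurtz1991weak}) that the paper cites.

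One point to tighten: you appeal to ``$L^2$-uniform integrability coming from Lemma~\ref{H1} and Proposition~\ref{prop:3}'' to pass the quadratic-covariation identities to the limit, but those results only give $\sup_N\E\sup_t\|V_N(t)\|^2<\infty$, which is not enough to conclude uniform integrability of $M^N_i(t)M^N_j(t)$. The paper closes this gap with a fourth-moment BDG estimate (see \eqref{eqn:28}), using that $[M^N_{k,\nu}]_T\le N^{-2}\mathcal{N}_{k,\nu}(NTC_2)$ from \eqref{eqn:1}; you should invoke the same bound.
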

\begin{proof}
Tightness of $\{m^N\}$ as $\clm([0,T]\times\Lambda)$-valued random variables is immediate since $m^N([0,T]\times\Lambda)=T$ for all $N$ and $\Lambda$ is a compact set. $\CC$-tightness of $\{V_N\}$ was proved in Proposition \ref{prop:3}. 

In order to verify the tightness of $\{M^N\}_{N\geq1}$, we will use Theorem 2.3.2 of \cite{joffe1986weak} (see Theorem \ref{thm:apptight} in Appendix). According to this theorem it suffices to verify conditions [A] and [T$_1$], given in Theorem \ref{thm:apptight}, for the sequence of quadratic variation processes, $\{\sum_{k\in\mathbf{K}}\sum_{\nu\in\Del^k}N\lan M^N_{k,\nu}\ran\}_{N\geq 1}$. Note that
\begin{align*}
\sum_{k\in\mathbf{K}}\sum_{\nu\Del^k}N\lan M^N_{k,\nu}\ran(t)
&= \frac{1}{N}\sum_{k\in\mathbf{K}}\sum_{\nu\in\Del^k}\int_0^t\Gam_N^k(\mu_N(s),U^N(s),\nu)ds.
\end{align*}
Condition [A] and [T$_1$] are now immediate on noting that Condition \ref{con:6} implies (see \eqref{eqn:1})
\begin{align*}
\frac{1}{N}\Gam_N^k(\mu_N(s),U^N(s),\nu)\leq C_2
\end{align*}
almost surely for all $N,k,\nu$, and $s$. Furthermore $\{M^N\}$ is $\CC$-tight because $j_T(M^N)\leq\frac{1}{\sqrt{N}}\mathbf{\ell}^{1/2}n_{\mathbf{K}}$.

Finally, from \eqref{eqn:13}, for $\del>0$ we have that
\begin{align*}
\PP\left[\sup_{0\leq s\leq T}\left\|\int_0^s\vartheta^N(u)du\right\|>\del\right]
&\leq\PP\left[\sup_{0\leq s\leq T}\theta^N(V_N(s))>\frac{\del}{T}\right].
\end{align*}
Since $\{V_N\}$ is $\CC$-tight for every $\eps>0$, there exists some $\kappa_1<\iy$ such that 
\begin{align*}
\PP\left[\sup_{0\leq s\leq T}\|V_N(s)\|> \kappa_1\right]\leq\eps
\end{align*}
for all $N\in\NN$. Recalling \eqref{eqn:14} we see that there exists an $N_0>0$ such that 
\begin{align*}
\sup_{y:\|y\|\leq\kappa_1}\theta^N(y)\le\frac{\del}{T}
\end{align*}
for all $N\geq N_0$. Thus for all $N\geq N_0$
\begin{align*}
&\PP\left[\sup_{0\leq s\leq T}\left\|\int_0^s\vartheta^N(u)du\right\|>\del\right]\\
&\qquad\leq\PP\left[\sup_{0\leq s\leq T}\theta^N(V_N(s))>\frac{\del}{T},\sup_{0\leq s\leq T}\|V_N(s)\|\leq \kappa_1\right]+\PP\left[\sup_{0\leq s\leq T}\|V_N(s)\|>\kappa_1 \right]
\leq \eps.
\end{align*}
Since $\eps>0$ is arbitrary we conclude that $\{\int_0^\cdot\vartheta^N(s)ds\}$ converges to 0 in probability in $\CC([0,T]:\RR^d)$. This concludes the proof of $(i),(ii)$ and $(iii)$.

Consider now $(iv)$. Let $Y$ be as in the statement of the theorem, namely $Y^N$ converges weakly along a subsequence to $Y=(V,M,m,0)$. The property that the last component of $Y$ must be 0 is a consequence of $(ii)$. For notational convenience we label the subsequence once more by $\{N\}$. Recall the orthogonal matrix $Q=[q_1\ q_2\ \ldots\ q_d]$ and function $a:[0,T]\to\RR^{d\times d}$ defined in Section \ref{DiffCont} as well as the function $\alpha^{1/2}:[0,T]\to\RR^{(d-1)\times(d-1)}$ introduced above \eqref{eqn:7}. Define $(d-1)$ and 1 dimensional processes $\hat{M}^N$ and $R^N$, respectively, as
\begin{align}\label{eqn:martmat}
\left(\begin{matrix}\hat{M}^N(t)\\
R^N(t)\end{matrix}\right)=Q'M^N(t).
\end{align}

Note that
\begin{align*}
R^N(t)
&= q_d'M^N(t)
= \sum_{k\in\mathbf{K}}\sum_{\nu\in\Del^k}\frac{1}{\sqrt{d}}\mathbf{1}'e_\nu M^N_{k,\nu}(t)=0
\end{align*}
since $\mathbf{1}'e_\nu=0$ for all $k\in\mathbf{K},\nu\in\Del^k$.

We now show that $M$ is a $\{\clg_t\}$-martingale. The Burkholder-Davis-Gundy inequality (see Theorem IV.48 of \cite{Protter2005}) implies that there exists $\kappa_2\in(0,\iy)$ such that for  $i=1,\ldots,d$
\begin{align}\label{eqn:28}
\begin{split}
&\sup_{N\in\NN}\E\sup_{0\leq t\leq T}(M_i^N)^4
\leq\sup_{N\in\NN} \kappa_2N^2\sum_{k\in\mathbf{K}}\sum_{\nu\in\Del^k}\E[M^N_{k,\nu}]_T^2\\
&\qquad = \sup_{N\in\NN}\kappa_2\sum_{k\in\mathbf{K}}\sum_{\nu\in\Del^k}\E\left(\frac{1}{N}\mathcal{N}_{k,\nu}
\left(\int_0^T\Gam^k_N(\mu_N(s),U^N(s),\nu)ds\right)\right)^2\\
&\qquad \leq\sup_{N\in\NN}\kappa_2\sum_{k\in\mathbf{K}}\sum_{\nu\in\Del^k}\E\left(\frac{1}{N}\mathcal{N}_{k,\nu}(NTC_2)\right)^2
<\iy ,
\end{split}
\end{align}
where the first inequality on the last line is from \eqref{eqn:1}. Thus $\{\sup_{0\leq t\leq T}\|M^N(t)\|^2\}_{N\geq 1}$ is uniformly integrable. Let $k\in\NN$ and $\clh:(\RR^d\times\RR^d\times\RR)^k\to \RR$ be a bounded and continuous function. For $0\le s\leq t\leq T$ and $0\leq t_1\leq \ldots\leq t_k\leq s$ we let $\xi^N_i=(V_N(t_i),M^N(t_i),m^N_i(f))$ and $\xi_i=(V(t_i),M(t_i),m_i(f))$ where $m^N_i(f)=\int_{\Lambda\times[0,t_i]}f(u)m_s^N(du)ds$, $m_i(f)=\int_{\Lambda\times[0,t_i]}f(u)m_s(du)ds$ and $f\in\CC_b(\Lambda)$. Then
\begin{align*}
\E^*\clh(\xi_1,\ldots,\xi_k)[M(t)-M(s)]
&= \lim_{N\to\iy}\E\clh(\xi_1^N,\ldots,\xi_k^N)[M^N(t)-M^N(s)]
=0
\end{align*}
where the first equality follows from the uniform integrability property noted above, and the second equality is a consequence of the martingale property of $M^N$ (which is a consequence of \eqref{eqn:28}). Combining this with the fact that $B_d$ is a Brownian motion independent of $Y$, it follows that $M$ is a $\{\clg_t\}$-martingale.

We now define the process which will converge to the Brownian motion driving the limit diffusion. Recall that the matrix $\alpha^{1/2}$ is invertible and the property \eqref{eqn:26}. Define the $(d-1)$-dimensional processes $B^N(t)=(B_i^N(t))_{i=1}^{d-1}$ as
\begin{align*}
B^N_i(t)=\sum_{j=1}^{d-1}\int_0^t\alpha^{-1/2}_{ij}(s)d\hat{M}_j^N(s),
\end{align*}
where $\hat{M}^N$ is as in \eqref{eqn:martmat}. Since $M^N$ is a $\{\clf^N_t\}$-martingale, both $\hat{M}^N$ and $B^N$ are $\{\clf^N_t\}$-martingales as well. From the estimate in \eqref{eqn:28} it follows that $\{\sup_{0\leq t\leq T}\|B^N(t)\|^2\}_{N\geq 1}$ is uniformly integrable. Also note that for integers $1\leq i,j\leq d-1$, the cross quadratic variation of $B_i^N$ and $B_j^N$ can be expressed as
\begin{align*}
\lan B^N_i,B^N_j\ran(t)
&= \sum_{m_1=1}^{d-1}\sum_{m_2=1}^{d-1}\int_0^t\alpha^{-1/2}_{im_1}(s)\alpha^{-1/2}_{jm_2}(s)d\lan\hat{M}^N_{m_1},\hat{M}^N_{m_2}\ran(s).
\end{align*}
Note that for all $t\in [0,T]$
\begin{align*}
\lan\hat{M}^N_{m_1},\hat{M}^N_{m_2}\rangle(t)
&= \lan q_{m_1}'M^N,q_{m_2}'M^N\ran (t)
= \sum_{m_3=1}^{d}\sum_{m_4=1}^{d}q_{m_3m_1}q_{m_4m_2}\lan M^N_{m_3},M^N_{m_4}\ran(t)
\end{align*}
where
\begin{align*}
\lan M^N_{m_3},M^N_{m_4}\ran(t)
&= \sum_{k\in\mathbf{K}}\sum_{\nu\in\Del^k}e_\nu^{m_3}e_\nu^{m_4}\frac{1}{N}\int_0^t\Gam_N^k(\mu_N(s),U^N(s),\nu)ds.
\end{align*}
Thus
\begin{align}\label{eqn:crossvar}
\begin{split}
\lan B^N_i,B^N_j\ran(t)
&=\int_0^t\left(Q'\sigma(s)^{-1}\sum_{k\in\mathbf{K}}\sum_{\nu\in\Del^k}\frac{1}{N}\left(\Gam_N^k(\mu_N(s),U^N(s),\nu)e_\nu e_\nu'\right)(\sigma(s)')^{-1}Q\right)_{ij}ds\\
&=\int_0^t\left(Q'\sigma(s)^{-1}a(s)(\sigma(s)')^{-1}Q\right)_{ij}ds+\eps^N_{ij}(t)
= tI_{ij}+\eps^N_{ij}(t)
\end{split}
\end{align}
where $I$ is the $d\times d$ identity matrix,
\begin{align*}
\eps^N(t)
&=\int_0^tQ'\sigma(s)^{-1}\sum_{k\in\mathbf{K}}\sum_{\nu\in\Del^k}\left(\frac{1}{N}\Gam_N^k(\mu_N(s),U^N(s),\nu)-\Gam^k(\mu(s),\nu)\right)e_\nu e_\nu'(\sigma(s)')^{-1}Qds
\end{align*}
and $\eps^N_{ij}$ is the $(i,j)$-th coordinate of $\eps^N$. From Condition \ref{con:6} and \eqref{eqn:26} we have that $\E\|\eps^N(t)\|\to0$ for all $t$ as $N\to\iy$.

Also it is easy to see that (cf. Theorem 2.2 of \cite{kurtz1991weak})  $B^N(\cdot)\Rightarrow\int_0^\cdot\alpha^{-1/2}(s)d\hat{M}(s)\doteq B(\cdot)$ in $\DD([0,T]:\RR^{d-1})$, where
 $\left(\begin{smallmatrix}\hat{M} \\ 0\end{smallmatrix}\right)=Q'M$. Also since $\{\sup_{0\leq t\leq T}\|B^N(t)\|^2\}_{N\geq 1}$ is uniformly integrable, we have from \eqref{eqn:crossvar} that
\begin{align*}
&\E^*\left(\clh(\xi_1,\ldots,\xi_k)[B(t)B'(t)-B(s)B'(s)-(t-s)I]\right)\\
&\qquad= \lim_{N\to\iy}\E\left(\clh(\xi_1^N,\ldots,\xi_k^N)[B^N(t)(B^N)'(t)-B^N(s)(B^N)'(s)-(t-s)I]\right)\\
&\qquad= \lim_{N\to\iy}\E\left(\clh(\xi_1^N,\ldots,\xi_k^N)\eps^N(t)\right)
=0.
\end{align*}
Combining this with the fact that $B_d$ is independent of $Y$ we see that $B$ is a $(d-1)$-dimensional continuous $\clg_t$-martingale with quadratic variation $\lan B\ran(t)=tI$ which implies, by L\'{e}vy's theorem, that $B$ is a $(d-1)$-dimensional $\{\clg_t\}$-Brownian motion. Since $B_d$ is a Brownian motion independent of $Y$, it follows that $\hat{W}\doteq(B, B_d)'$ is a $d$-dimensional $\{\clg_t\}$-Brownian motion. Also note that 
\begin{align}\label{eqn:marthat}
\hat{M}(t)=\int_0^t\alpha^{1/2}(s)dB(s).
\end{align}

The final step of the proof is to show that $V$ is a solution to \eqref{eqn:9} with $W=Q\hat{W}$. Note that since $Q$ is orthogonal, $W$ is a $d$-dimensional $\{\clg_t\}$-Brownian motion as well. From the definition of $\eta$ and since $e_\nu\cdot\mathbf{1}=0$ for all $k\in\mathbf{K},\nu\in\Del^k$, $Q'\eta$ takes the form 
\begin{align}\label{eqn:hatdef1}
Q'\eta(t,u)=\left(\begin{matrix}\hat{\eta}(t,u)\\0\end{matrix}\right).
\end{align}
Similarly, from the expression for $\beta$ and from \eqref{eqn:7} it follows that
\begin{align}\label{eqn:hatdef2}
Q'\beta(t) Q=\left(\begin{matrix}
\hat{\beta}(t) & 0\\
0 & 0
\end{matrix}\right), \; \; Q'\sigma(t)Q=\left[\begin{matrix}
\alpha^{1/2}(t) & 0\\
0 & 0
\end{matrix}\right].
\end{align}
Also since $V_N\cdot\mathbf{1}=0$ and $v_0\cdot\mathbf{1}=0$, we have
\begin{align}\label{eqn:hatdef3} 
Q'V=\left[\begin{matrix}
\hat{V}\\ 0
\end{matrix}\right], \;\; Q'v_0=\left[\begin{smallmatrix}\hat{v}_0\\0\end{smallmatrix}\right].
\end{align}
We first show that $\hat{V}$ solves the $d-1$ dimensional equation
\begin{align}\label{eqn:17}
\hat{V}(t)=\hat{v}_0+\int_0^t\hat{\eta}(s,u)m_s(du)ds+\int_0^t\hat{\beta}(s)\hat{V}(s)ds+\int_0^t\alpha^{1/2}(s)dB(s).
\end{align} 
Letting $\left[\begin{smallmatrix}\hat{V}_N\\0\end{smallmatrix}\right]\doteq Q'V_N$ and using \eqref{eqn:vmart2}, we have,
\begin{align*}
\hat{V}_N(t)=\hat{v}_N+\int_0^t\hat{\eta}(s,u)m_s^N(du)ds+\int_0^t\hat{\beta}(s)\hat{V}_N(s)ds+\int_0^t\hat{\vartheta}^N(s)ds+\hat{M}^N(t)
\end{align*}
where $\left[\begin{smallmatrix}\hat{v}_N\\0\end{smallmatrix}\right]=Q'v_N$ and $\left[\begin{smallmatrix}\hat{\vartheta}^N\\0\end{smallmatrix}\right]=Q'\vartheta^N$.  Note that $(\hat{V}_N,\hat{M}^N,m^N,\hat{\vartheta}^N)\Rightarrow(\hat{V},\hat{M},m,0)$. Without loss of generality we  assume that the convergence holds a.s.

Since $m^N\to m$, we have
\begin{align*}
\int_0^t\int_\Lambda h_1^k(\nu,\mu(s))u_{k,\nu}m_s^N(du)ds
\to\int_0^t\int_\Lambda h_1^k(\nu,\mu(s))u_{k,\nu}m_s(du)ds
\end{align*}
and thus
\begin{align}\label{eqn:18}
\int_0^t\hat{\eta}(s,u)m_s^N(du)ds\to\int_0^t\hat{\eta}(s,u)m_s(du)ds.
\end{align}
Similarly it follows that
\begin{align}\label{eqn:19}
\int_0^t\hat{\beta}(s)\cdot \hat{V}_N(s)ds
\to \int_0^t \hat{\beta}(s)\cdot \hat{V}(s)ds.
\end{align}
Combining \eqref{eqn:18} and \eqref{eqn:19} with \eqref{eqn:marthat} we see that $\hat{V}$ satisfies \eqref{eqn:17}. Recalling the relation between $(\hat{v}_0,\hat{V},\hat{\eta},\hat{\beta},\alpha^{1/2})$ and $(v_0,V,\eta,\beta,\sigma)$ we see that $V=Q\left[\begin{smallmatrix}\hat{V}\\0\end{smallmatrix}\right]$ is a solution of \eqref{eqn:6}, where $W=Q\hat{W}$. This proves $(iv)$ and thus completes the proof of the theorem.
\end{proof}

We now apply the above result to prove Theorem \ref{thm:6} which shows that the limit of the value of the optimal control problem for the $N$-th system as $N\to\iy$ can be bounded from below by the value of the control problem for the limit diffusion.

\begin{proof}[Proof of Theorem \ref{thm:6}]
Let $v_N,v_0$ be as in the statement of the theorem. It suffices to show that for any sequence of admissible controls $\{U^N\}$,  $\liminf_{N\to\iy}J_N(U^N,v_N)\geq R(v_0)$. Let $U^N\in\cla_N$, and $m^N$ be the corresponding relaxed control defined as in \eqref{eqn:relcon}. From the previous theorem we have that $\{(V_N,M^N,m^N,\int_0^\cdot\vartheta^N(s)ds)\}_{N\geq 1}$ is tight and thus every subsequence (also denoted with the index $N$) has a further subsequence $\{(V_{N_\ell},M^{N_\ell},m^{N_\ell},\int_0^\cdot\vartheta^{N_\ell}(s)ds)\}$ such that 
\begin{align*}
(V_{N_\ell},M^{N_\ell},m^{N_\ell},\int_0^\cdot\vartheta^{N_\ell}(s)ds) \Rightarrow (V,M,m,0).
\end{align*}
Furthermore, equation \eqref{eqn:9} holds for the limit point $(V,M,m,0)$ with a $\{\clg_t\}$-Brownian motion $W$ where $\{\clg_t\}$ is as in the statement of Theorem \ref{thm:7} and $U_{k,\nu}$ are defined as in \eqref{eqn:control}. It follows from Fatou's Lemma that 
\begin{align*}
 \liminf_{\ell\to\iy}\E\int_0^T   k_1(V_{N_\ell}(s))ds
 \geq \E^*\int_0^T\int_\Lambda   k_1(V(s))ds.
\end{align*}
Another application of Fatou's Lemma shows 
\begin{align*}
\liminf_{\ell\to\iy}\E\int_0^T\int_\Lambda   k_2(u)m_s^{N_\ell}(du)ds
&\geq \E^*\int_0^T\int_\Lambda   k_2(u)m_s(du)ds\\
&\geq \E^*\int_0^T   k_2(U(s))ds
\end{align*}
where the second inequality follows on using Jensen's inequality, the relation \eqref{eqn:control}, and the assumed convexity of $k_2$. Thus
\begin{align*}
\liminf_{\ell\to\iy}J_{N_\ell}(U^{N_\ell},v_N)
&= \liminf_{\ell\to\iy}\E\int_0^T   (k_1(V_{N_\ell}(s))+k_2(\sqrt{N_\ell}U^{N_\ell}(s)))ds\\
&\geq \E\int_0^T   (k_1(V(s))+k_2(U(s)))ds\\
&\geq R(v_0),
\end{align*}
where the last inequality follows on noting that $U=(U_{k,\nu})_{k\in\mathbf{K},\nu\in\Del^k}\in\cla(\Xi)$ where $\Xi=(\Om^*,\clf^*,\PP^*,\{\clg_t\})$. This completes the proof of the theorem.
\end{proof}

\section{Feedback Controls}\label{Feedback}
In this section we will introduce feedback controls, $U^N_g\in\cla_N$ and $U_g\in\cla(\Xi)$, associated with a measurable map $g:[0,T]\times\RR^d\to\Lambda$ and prove that whenever $g$ is continuous and $v_N\to v_0$, we have, under suitable conditions, 
\begin{align}\label{eqn:costconv}
J_N(U^N_g,v_N)\to J(U_g,v_0).
\end{align}
In Section \ref{NetFeed} we introduce feedback controls for the $N$-th system, whereas in Section \ref{DiffFeed} we define feedback controls for the limit diffusion. For the latter case we argue, using the non degeneracy of $\alpha(t)$ (under Condition \ref{con:8}), that there is a unique weak solution of the corresponding stochastic differential equation. Finally, in Section \ref{FeedConv} we prove the convergence in \eqref{eqn:costconv} when $g$ is a continuous map. 

\subsection{Feedback Control in the $N$-th System}\label{NetFeed}
Given a measurable function $g:[0,T]\times\RR^d\to \Lambda$, define for all $k\in\mathbf{K},\nu\in\Del^k$, functions $\Gam_N^{k,g}(\cdot,\nu):\cls_N\times[0,T]\to\RR_+$ as 
\begin{align}\label{eqn:gamgdef}
\Gam_N^{k,g}(r,s,\nu)
\doteq \Gam_N^k\left(r,\frac{1}{\sqrt{N}}g(s,\sqrt{N}(r-\mu(s)),\nu\right).
\end{align}
As with $u\in\Lambda$, $g$ can be indexed by $k\in\mathbf{K}$ and $\nu\in\Del^k$ with the corresponding entry denoted as $g_{k,\nu}$. Define $\mu_N^g$ through the right side of \eqref{eqn:stpro} by replacing $U^N(s)$ with
\begin{align*}
U^N_g(s) \doteq \frac{1}{\sqrt{N}}g(s,\sqrt{N}(\mu^g_N(s)-\mu(s))).
\end{align*} 
Then it can be checked that $U^N_g\in\cla_N$ and $\mu^g_N$ is a time inhomogeneous Markov process with generator
\begin{align}\label{eqn:congen}
L^N_gf(s,r) \doteq\sum_{k=1}^K\sum_{\nu\in\Del^k}\Gam_N^{k,g}(r,s,\nu)\left[f\left(s,r+\frac{1}{N}e_\nu\right)-f(s,r)\right]
\end{align} 
for $s\in[0,T],\ r\in\cls_N,\ f:[0,T]\times\cls_N\to\RR$.

\subsection{Diffusion Feedback Control}\label{DiffFeed}
In this section we introduce feedback controls for the limit diffusion model. Fix $v_0\in\VV_{d-1}$.
\begin{definition}
Let $g:[0,T]\times\RR^d\to\Lambda$ be a measurable map. We say that the equation
\begin{align}\label{eqn:4}
\left\{
\begin{array}{rl}
dV(t)&=\eta(t,g(t,V(t)))dt+\beta(t)V(t)dt+\sigma(t)dW(t)\\
V(0)&=v_0
\end{array}
\right.
\end{align}
admits a weak solution if there exists a filtered probability space $(\Om,\clf,\PP,\{\clf_t\})$ on which is given an $\{\clf_t\}$-Wiener process $W$ and an $\clf_t$-adapted continuous process $V$ such that for all $0\leq t\leq T$,
\begin{align*}
V(t)=v_0+\int_0^t\eta(s,g(s,V(s)))ds+\int_0^t\beta(s)V(s)ds+\int_0^t\sigma(s)dW(s)
\end{align*}
almost surely. We say that \eqref{eqn:4} admits a unique weak solution if whenever there are two sets of such spaces and processes denoted as $(\Om^i,\clf^i,\PP^i,\{\clf_t^i\},(W^i,V^i)),\ i=1,2$ then the probability law of $V^1$ is the same as that of $V^2$.
\end{definition}

Given a weak solution $V$ associated with the system $\Xi=(\Om,\clf,\PP,\{\clf_t\},\{W_t\})$ define $U_g\doteq g(\cdot,V(\cdot))\in\cla(\Xi)$. We refer to this control as the feedback control (for the limit diffusion) associated with the map $g$.
\begin{theorem}\label{thm:2}
Under Condition \ref{con:8} there is a unique weak solution of \eqref{eqn:4}.
\end{theorem}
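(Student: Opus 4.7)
The plan is to reduce the degenerate $d$-dimensional problem to a uniformly non-degenerate $(d-1)$-dimensional one by rotating into the subspace $\VV_{d-1}$, and then to invoke Girsanov's theorem. The existence and uniqueness of the linear SDE obtained after removing the $g$-dependent drift will supply both halves of the statement.

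First I would observe that since $v_0\in\VV_{d-1}$ and $e_\nu\cdot\mathbf{1}=0$ for every $k\in\mathbf{K}$, $\nu\in\Del^k$, one has $\eta(t,u)\cdot\mathbf{1}=0$, $(\beta(t)x)\cdot\mathbf{1}=0$, and $\sigma(t)'\mathbf{1}=0$; hence any weak solution $V$ of \eqref{eqn:4} satisfies $V(t)\cdot\mathbf{1}=0$ a.s.\ for all $t\in[0,T]$. Writing $Q'V(t)=\bigl(\hat V(t)',0\bigr)'$ with $\hat V$ of dimension $d-1$, and using \eqref{eqn:hatdef1}--\eqref{eqn:hatdef3}, $\hat V$ solves
\begin{equation*}
d\hat V(t) \;=\; \bigl[\,\hat\eta\bigl(t,g(t,Q\tfrac{}{}(\hat V(t)',0)')\bigr) + \hat\beta(t)\hat V(t)\,\bigr]\,dt \;+\; \alpha^{1/2}(t)\,dB(t),\qquad \hat V(0)=\hat v_0,
\end{equation*}
where $B$ is a $(d-1)$-dimensional Brownian motion (the first $d-1$ coordinates of $Q'W$). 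Conversely, any weak solution of this reduced equation lifts to a weak solution of \eqref{eqn:4} upon setting $V=Q(\hat V',0)'$ and appending an independent scalar Brownian motion $B_d$ to form $W=Q(B',B_d)'$. Thus it suffices to establish weak existence and uniqueness for the reduced equation.

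For this reduced equation set $b^{(1)}(t,\hat v)\doteq \hat\eta(t,g(t,Q(\hat v',0)'))$ and $b^{(2)}(t,\hat v)\doteq \hat\beta(t)\hat v$. Since $\Lambda$ is compact and each $h_1^k(\nu,\cdot)$ is bounded, $b^{(1)}$ is bounded, while $b^{(2)}$ is linear in $\hat v$ with Lipschitz coefficient. By Lemma \ref{lem:nondeg} and \eqref{eqn:26}, $\alpha^{-1/2}$ is bounded on $[0,T]$, so
\begin{equation*}
\theta(t,\hat v)\doteq \alpha^{-1/2}(t)\,b^{(1)}(t,\hat v)
\end{equation*}
is a bounded measurable function. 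For \emph{weak existence}, start with a filtered probability space carrying a $(d-1)$-dim Brownian motion $\tilde B$ and use Lipschitz/linear-growth theory to obtain the unique strong solution $\hat V$ of the purely linear SDE $d\hat V = b^{(2)}(t,\hat V)\,dt + \alpha^{1/2}(t)\,d\tilde B$. Then apply Girsanov with the bounded integrand $\theta(\cdot,\hat V(\cdot))$: Novikov's condition is trivially satisfied, the exponential is a true martingale, and the tilted measure $\PP$ makes $B(t)\doteq \tilde B(t)-\int_0^t\theta(s,\hat V(s))\,ds$ a Brownian motion under which $\hat V$ solves the reduced equation.

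For \emph{weak uniqueness}, consider any two weak solutions $(\hat V^i,B^i)$, $i=1,2$, on spaces $(\Om^i,\clf^i,\PP^i)$. For each $i$ apply Girsanov in the reverse direction with the bounded integrand $-\theta(\cdot,\hat V^i(\cdot))$ to obtain an equivalent measure $\tilde\PP^i$ under which $\tilde B^i(t)=B^i(t)+\int_0^t\theta(s,\hat V^i(s))ds$ is a Brownian motion and $\hat V^i$ solves the linear SDE $d\hat V^i = b^{(2)}(t,\hat V^i)\,dt + \alpha^{1/2}(t)\,d\tilde B^i$. By pathwise uniqueness for this linear equation, the laws of $\hat V^i$ under $\tilde\PP^i$ coincide. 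The Radon--Nikodym derivative $d\PP^i/d\tilde\PP^i$ can be written as an explicit (measurable) functional of the path of $\hat V^i$ alone, because
\begin{equation*}
\int_0^T \theta(s,\hat V^i(s))\cdot d\tilde B^i(s) \;=\; \int_0^T \theta(s,\hat V^i(s))\cdot \alpha^{-1/2}(s)\bigl[d\hat V^i(s) - b^{(2)}(s,\hat V^i(s))\,ds\bigr]
\end{equation*}
is pathwise defined. Therefore the law of $\hat V^i$ under $\PP^i$ is also uniquely determined, which gives weak uniqueness for the reduced equation and, by the lifting described above, for \eqref{eqn:4}.

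The only genuinely delicate point is verifying that the inverse Girsanov density is a well-defined functional of the solution path, which is what forces the non-degeneracy of $\alpha$ (i.e.\ Condition \ref{con:8}); once this is in hand, the rest is standard. Routine measurability checks for $\tilde g(t,\hat v)\doteq g(t,Q(\hat v',0)')$ and for the composed drift are skipped, as they follow directly from the measurability of $g$.
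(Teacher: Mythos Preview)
Your proposal is correct and follows essentially the same route as the paper: reduce \eqref{eqn:4} via the rotation $Q$ to the $(d-1)$-dimensional non-degenerate SDE \eqref{eqn:5}, establish the one-to-one correspondence between weak solutions of the two equations, and then invoke Girsanov (using boundedness of $\alpha^{-1/2}\hat\eta$) for weak existence and uniqueness of the reduced equation. The paper simply says ``a standard argument using Girsanov's theorem'' at the point where you spell out the existence/uniqueness details, so your write-up is a faithful expansion of theirs.
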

\begin{proof}
Suppose $V$ is a weak solution of \eqref{eqn:4} on some system $\Xi=(\Om,\clf,\PP,\{\clf_t\},\{W_t\})$. Recall the definition of $\hat{V},\ \hat{\eta},$ and $\hat{\beta}$  from Section \ref{LBnd} (cf. \eqref{eqn:hatdef1}, \eqref{eqn:hatdef2}, \eqref{eqn:hatdef3}). Let $Q' W\doteq\left(\begin{smallmatrix}B\\ W^*\end{smallmatrix}\right)$ and note that $B$ and $W^*$ are independent standard $(d-1)$ and 1 dimensional Brownian motions, respectively. Define $\hat{g}:[0,T]\times\RR^{d-1}\to\Lambda$ as $\hat{g}(t,v)=g(t,Q\left(\begin{smallmatrix}v\\ 0\end{smallmatrix}\right))$ and let $\left(\begin{smallmatrix}\hat{v}_0\\ 0\end{smallmatrix}\right)=Q'v_0$. Note that $\hat{V}$ is a solution of the $(d-1)$-dimensional SDE
\begin{align}\label{eqn:5}
\hat{V}(t)=\hat{v}_0+\int_0^t\hat{\eta}(s,\hat{g}(s,\hat{V}(s)))ds+\int_0^t\hat{\beta}(s)\hat{V}(s)ds+\int_0^t\alpha^{1/2}(s)dB(s).
\end{align}
On the other hand if $\hat{V}$ is a solution of the SDE \eqref{eqn:5} on some filtered probability space $(\Om,\clf,\PP,\{\clf_t\})$, where $B$ is a $(d-1)$-dimensional $\{\clf_t\}$ Brownian Motion, then as argued at the end of Theorem \ref{thm:7}, by a suitable augmentation of the space with a one-dimensional Brownian motion $B_d$, $Q\left[\begin{smallmatrix}\hat{V} \\ 0\end{smallmatrix}\right]$ is a solution of the SDE \eqref{eqn:4}, with Brownian motion $W=Q\left[\begin{smallmatrix}B \\ B_d\end{smallmatrix}\right]$. Since from \eqref{eqn:26} $\sup_{v\in\RR^d}\int_0^T\|\alpha(s)\|^{-1}\|\hat{\eta}(s,\hat{g}(s,v))\|^2ds<\iy$,  a standard argument using Girsanov's theorem shows that \eqref{eqn:5} has a unique weak solution. From the one-to-one correspondence between solutions of \eqref{eqn:5} and \eqref{eqn:4} noted above it now follows that there is a unique weak solution for \eqref{eqn:4}.
\end{proof}

Recall the generator $\cll_g$ in \eqref{eqn:ggen} associated with a measurable map
 $g:[0,T]\times \RR^d \to \Lambda$.
\begin{definition}
Given $v_0\in\VV_{d-1}$, a $d$-dimensional stochastic process $V$ on some filtered probability space $(\Om,\clf,\PP,\{\clf_t\})$  will be called a solution to the martingale problem associated with $(\cll_g,v_0)$ if
\begin{align*}
\phi(V(t))-\phi(v_0)-\int_0^t\cll_g\phi(s,V(s))ds
\end{align*}
is a martingale for all $\phi\in\CC_c^\iy(\RR^d)$ and $V(0)=v_0$ almost surely.
\end{definition}
The first part of the following result is standard (cf. \cite{stroock2007multidimensional}) whereas the second part is immediate from Theorem \ref{thm:2}.
\begin{theorem}\label{thm:martprob}
A process $V$ is a weak solution of the SDE \eqref{eqn:4} if and only if it is the solution to the martingale problem for $(\cll_g,v_0)$. In particular, under Condition \ref{con:8}, there is a unique solution
to the martingale problem for $(\cll_g,v_0)$.
\end{theorem}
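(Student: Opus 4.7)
The plan is to establish the equivalence between weak solutions of \eqref{eqn:4} and solutions of the martingale problem for $(\cll_g,v_0)$; well-posedness of the latter then follows directly from Theorem \ref{thm:2}.

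For the forward implication, given a weak solution $V$ on $(\Om,\clf,\PP,\{\clf_t\},\{W_t\})$ I would apply It\^o's formula to $\phi(V(t))$ for arbitrary $\phi\in\CC_c^\infty(\RR^d)$. The first-order and quadratic variation terms combine to produce exactly $\cll_g\phi(s,V(s))$, and the remaining stochastic integral $\int_0^t\nabla\phi(V(s))'\sigma(s)\,dW(s)$ is a true martingale since $\nabla\phi$ is bounded (compact support) and $\sigma$ is bounded on $[0,T]$. The martingale property of $\phi(V(t))-\phi(v_0)-\int_0^t\cll_g\phi(s,V(s))ds$ is then immediate.

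The reverse implication is the main obstacle because $\sigma(t)$ has rank only $d-1$, so the standard Brownian representation result does not apply directly. My plan is to reduce dimension via the orthogonal transformation $Q$ of Section \ref{DiffCont}. First, applying the martingale property to smooth truncations of $y\mapsto y\cdot\mathbf{1}$ and exploiting that $e_\nu\cdot\mathbf{1}=0$ for every $\nu$ (whence $\mathbf{1}'\eta(t,u)=0$, $\mathbf{1}'\beta(t)=0$, and $\sigma(t)\mathbf{1}=0$), I would show that $V(t)\cdot\mathbf{1}=v_0\cdot\mathbf{1}=0$ for all $t$ almost surely. Therefore $Q'V(t)=(\hat V(t),0)'$ for a $(d-1)$-dimensional process $\hat V$. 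Testing the martingale problem against functions of the form $\phi\circ Q'$ then translates the problem into a martingale problem for $\hat V$ with drift $\hat\eta(s,\hat g(s,\cdot))+\hat\beta(s)(\cdot)$ and diffusion matrix $\alpha(s)$.

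Under Condition \ref{con:8}, Lemma \ref{lem:nondeg} guarantees that $\alpha(s)$ is uniformly non-degenerate, so the classical representation theorem (e.g.\ Theorem 4.5.2 of \cite{ethier2009markov}) applies: on a possibly enlarged probability space there is a $(d-1)$-dimensional Brownian motion $B$ such that $\hat V$ satisfies \eqref{eqn:5}. Adjoining an independent one-dimensional Brownian motion $B_d$ and setting $W=Q(B',B_d)'$ produces a weak solution of \eqref{eqn:4}, completing the equivalence. Uniqueness of the martingale problem is then automatic: any two solutions correspond to weak solutions of \eqref{eqn:4}, whose laws coincide by Theorem \ref{thm:2}.
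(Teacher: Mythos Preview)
Your plan is correct and considerably more detailed than the paper's own ``proof,'' which consists of the single sentence ``The first part of the following result is standard (cf.\ \cite{stroock2007multidimensional}) whereas the second part is immediate from Theorem \ref{thm:2}.'' So you have supplied the argument the paper merely cites.

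One point worth noting: your claim that ``the standard Brownian representation result does not apply directly'' because $\sigma(t)$ is degenerate is not quite right, and this leads you to invoke Condition \ref{con:8} for the reverse implication when the theorem asserts the equivalence unconditionally. The classical representation theorem (e.g.\ Ikeda--Watanabe Theorem II.7.1$'$, or Karatzas--Shreve Theorem 3.4.2, or the treatment in \cite{stroock2007multidimensional}) handles degenerate $a=\sigma\sigma'$ directly: one enlarges the probability space by an independent Brownian motion to fill in the null directions, exactly as you do when you adjoin $B_d$. So the dimension reduction through $Q$ is not needed for the equivalence itself; one can go straight from the $d$-dimensional martingale problem to a $d$-dimensional Brownian motion $W$ on an enlarged space with $M(t)=\int_0^t\sigma(s)\,dW(s)$. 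Your route via $\hat V$ and the non-degenerate $(d-1)$-dimensional problem is valid and mirrors the reduction used in Theorems \ref{thm:2} and \ref{thm:7}, but it proves slightly less than stated (equivalence only under Condition \ref{con:8}) unless you note that the representation step does not actually require non-degeneracy.
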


\subsection{Convergence Under Continuous Feedback Controls}\label{FeedConv}
Let $g:[0,T]\times\RR^d\to\Lambda$ be a continuous function and $V^g$ be the unique solution to \eqref{eqn:4} given on some system $\Xi=(\Om,\clf,\PP,\{\clf_t\},\{W_t\})$. Define 
\begin{align}\label{eqn:veeNGdef}
V_N^g(t)=\sqrt{N}(\mu_N^g(t)-\mu(t)).
\end{align}
Recall that $U_g(t)=g(t,V^g(t))\in\cla(\Xi)$ and $U_g^N(t)=\frac{1}{\sqrt{N}}g(t,V^g_N(t))\in\cla_N$ are the controls associated with $g$ for the limit diffusion and pre-limit system, respectively. In this section we will show that $V^g_N$ converges in distribution to $V^g$, in $\DD([0,T]:\RR^d)$ and that $J_N(U_g^N,v_N)$ converges to $J(U_g,v_0)$. Namely we prove the following result.
\begin{theorem}\label{thm:3}
Suppose Conditions \ref{con:6}, \ref{con:7}, and \ref{con:8} hold, and let $v_N,v_0$ be as in Theorem \ref{thm:main}, where $x_N=\mu_N^g(0)$. Then as $N\to\iy$:
\begin{enumerate}
\item[(i)] $V_N^g$ converges in distribution, in $\DD([0,T]:\RR^d)$, to $V^g$ where $V^g$ is the unique solution to the martingale problem for $(\cll_g,v_0)$.
\item[(ii)] $J_N(U^g_N,v_N)\to J(U^g,v_0)$.
\end{enumerate}
\end{theorem}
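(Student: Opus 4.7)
The plan for part (i) is to invoke Theorem \ref{thm:7} with $U^N = U^N_g \in \mathcal{A}_N$, which applies since $V^g_N(0) = v_N \to v_0$. Consequently, the tuple $(V^g_N, M^N, m^N, \int_0^\cdot \vartheta^N(s)\,ds)$ is tight and every subsequential weak limit $(V, M, m, 0)$ satisfies \eqref{eqn:9}, driven by some $d$-dimensional $\{\mathcal{G}_t\}$-Brownian motion $W$. Noting from \eqref{eqn:relcon} that $m^N_s(du) = \delta_{g(s, V^g_N(s))}(du)$, the crux of the argument is to identify the limit as $m_s(du) = \delta_{g(s, V(s))}(du)$. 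By the $\mathcal{C}$-tightness of $\{V^g_N\}$ from Proposition \ref{prop:3}, I may apply the Skorokhod representation theorem to assume, along the subsequence, that $V^g_N \to V$ almost surely in $\mathcal{C}([0,T]:\mathbb{R}^d)$. Continuity of $g$, compactness of $\Lambda$, and bounded convergence then yield, for every $\phi \in \mathcal{C}_b([0,T] \times \Lambda)$,
\begin{equation*}
\int_0^T \!\!\int_\Lambda \phi(s,u)\, m^N_s(du)\, ds = \int_0^T \phi(s, g(s, V^g_N(s)))\, ds \longrightarrow \int_0^T \phi(s, g(s, V(s)))\, ds,
\end{equation*}
which identifies $m$ and shows $U_{k,\nu}(s) = g_{k,\nu}(s, V(s))$. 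Substituting this in \eqref{eqn:9}, $V$ becomes a weak solution of the feedback SDE \eqref{eqn:4}; by Theorem \ref{thm:2} weak uniqueness under Condition \ref{con:8} forces the law of $V$ to coincide with that of $V^g$. Since every subsequential limit has the same law, $V^g_N \Rightarrow V^g$ in $\mathcal{D}([0,T]:\mathbb{R}^d)$.

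For part (ii), I decompose
\begin{equation*}
J_N(U^N_g, v_N) = \mathbb{E}\!\int_0^T k_1(V^g_N(s))\, ds + \mathbb{E}\!\int_0^T k_2\!\left(g(s, V^g_N(s))\right) ds.
\end{equation*}
Because $\Lambda$ is compact and $k_2$ is continuous, $k_2 \circ g$ is bounded on $[0,T] \times \mathbb{R}^d$; combining this boundedness with part (i) and the continuous mapping theorem (using that $V^g$ has continuous sample paths) shows the second term converges to $\mathbb{E}\int_0^T k_2(U_g(s))\, ds$. For the first term, since $k_1$ is continuous with at most polynomial growth of order $p > 1$, it suffices to establish uniform integrability of $\{\int_0^T k_1(V^g_N(s))\, ds\}_N$, which in turn reduces to showing
\begin{equation*}
\sup_{N \ge 1} \mathbb{E} \sup_{0 \le t \le T} \|V^g_N(t)\|^{p'} < \infty \quad \text{for some } p' > p.
\end{equation*}

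The principal technical obstacle is this higher-moment estimate, which strengthens Proposition \ref{prop:3}. My approach is to apply the semimartingale decomposition \eqref{eqn:tightmart} together with the coefficient bounds $\|\mathbf{b}^N(V^g_N(s), s, \omega)\|^2 \le C(1 + \|V^g_N(s)\|^2)$ and $\operatorname{Tr}(\mathbf{a}^N(V^g_N(s), s, \omega)) \le C(1 + \|V^g_N(s)\|^2)$ from Lemma \ref{H1}; the Burkholder-Davis-Gundy inequality applied to $M^N$, whose jumps are bounded by $\ell^{1/2} n_{\mathbf{K}}/\sqrt{N}$, controls the $2q$-th moment of $\sup_t \|M^N(t)\|$ by the $q$-th moment of $\int_0^T (1 + \|V^g_N(s)\|^2)\,ds$, and Gronwall's inequality applied to $t \mapsto \mathbb{E} \sup_{0 \le s \le t} \|V^g_N(s)\|^{2q}$ then yields a uniform bound of every order $q \ge 1$. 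Equipped with this uniform integrability, part (i) and continuity of $k_1$ give $\mathbb{E}\int_0^T k_1(V^g_N(s))\, ds \to \mathbb{E}\int_0^T k_1(V^g(s))\, ds$, which completes (ii).
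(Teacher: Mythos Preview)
Your proof is correct, and part (ii) follows essentially the same BDG-plus-Gronwall route as the paper (the paper's execution is slightly cleaner: it bounds $\E\sup_t\|M^N(t)\|^{2p}$ uniformly in $N$ directly from the explicit Poisson structure and \eqref{eqn:1}, so that the Gronwall step is driven only by the linear growth of the drift $b_N^u$; your sketch folds the martingale bound into the Gronwall loop, which also works but is a bit less transparent).

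For part (i), however, you take a genuinely different path from the paper. The paper argues via the generator-convergence criterion of Joffe--M\'etivier (their Theorem 3.3.1): it decomposes $\cll^N_g(\phi,s,V^g_N(s))-\cll_g\phi(s,V^g_N(s))$ into three error terms $A_1^N,A_2^N,A_3^N$ (Taylor remainder, the $\beta_k^N$ error from Condition \ref{con:7}, and the rate-convergence error from Condition \ref{con:6}) and shows each vanishes in $L^1$. You instead recycle Theorem \ref{thm:7}, which already packages all of this analysis, and then identify the limiting relaxed control $m$ by exploiting the continuity of $g$ under a Skorokhod coupling. This is more economical---you avoid re-deriving the generator estimates---and it makes the role of the continuity hypothesis on $g$ very explicit: it is precisely what lets you pass from $m^N_s=\delta_{g(s,V^g_N(s))}$ to $m_s=\delta_{g(s,V(s))}$. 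The paper's approach, by contrast, is self-contained and would extend more readily to settings where a result like Theorem \ref{thm:7} is not already available.
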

\begin{proof}
First consider $(i)$. From Proposition \ref{prop:3} we have that $\{V_N^g\}$ is $\CC-$tight in $\DD([0,T]:\RR^d)$. 
Since $g$ is continuous, the operator $\cll_g$ defined in \eqref{eqn:ggen} maps $\CC_c^\iy(\RR^d)$ to $\CC_b([0,T]\times\RR^d)$. In view of this, the tightness of $\{V_N^g\}$, the uniqueness established in Theorem \ref{thm:martprob}, and Theorem 3.3.1 of \cite{joffe1986weak}, it suffices to show that for all $\phi\in \CC_c^\iy(\RR^d)$ 
\begin{align}\label{eqn:opconverge}
\lim_{N\to\iy}\int_0^T\E^N|\cll^N_g(\phi,s,V_N^g(s))-\cll_g\phi(s,V_N^g(s))|ds=0
\end{align}
where $\cll_g$ is an in \eqref{eqn:ggen} and $\cll^N_g$ is defined by the right side of \eqref{eqn:10}, replacing $u$ with $\frac{1}{\sqrt{N}}g(s,\sqrt{N}(s-\mu(s)))$, namely
\begin{align*}
\mathcal{L}_g^N(\phi,s,y)
&\doteq \sum_{k\in\mathbf{K}}\sum_{\nu\in\Del^k}\Gam_N^{k,g}\left(\gamma_N(s,y),s,\nu\right)\left[\phi\left(y+\frac{1}{\sqrt{N}}e_\nu\right)-\phi(y)\right]-\sqrt{N}F(\mu(s))\nabla\phi(y)
\end{align*}
for $\phi\in\CC_c^\iy(\RR^d),\ s\in[0,T],\ y\in\RR^d$ where $\Gam_N^{k,g}$ is as in \eqref{eqn:gamgdef}
(definition of $\Gam_N^{k,g}$  is extended to all $r\in \RR^d$ on setting $\Gam_N^{k,g}(r,s,\nu)=0$ if $r\not \in \cls_N$). We note that Theorem 3.3.1 of \cite{joffe1986weak} considers the setting of time-homogeneous diffusions, however the proof carries over to the setting of time-inhomogeneous generators considered here with minor modifications.

We now fix $\phi\in\CC_c^\iy(\RR^d)$ and for all $N\in\NN,\ k\in\mathbf{K},\ \nu\in\Del^k$ define functions $\vph_{k,\nu,1}^N:\RR^d\to \RR_+$, $\vph_{k,\nu,2}^N:[0,T]\times\RR^d\to \RR_+$, and $A_j^N:[0,T]\times\RR^d\to\RR_+$ for $j=1,2,3$, as
\begin{align*}
\vph_{k,\nu,1}^N(y)&\doteq\left|\phi\left(y+\frac{1}{\sqrt{N}}e_\nu\right)-\phi(y)-\frac{1}{\sqrt{N}}e_\nu'\nabla\phi(y)-\frac{1}{2N}e_\nu'D^2\phi(y)e_\nu\right|,\\
\vph_{k,\nu,2}^N(s,y)&\doteq\left|\beta_k^N(y,\mu(s),g(s,y),\nu)\right|,
\end{align*}
and
\begin{align*}
A_1^N(s,y)
&\doteq\sum_{k\in\mathbf{K}}\sum_{\nu\in\Del^k}\Gam_N^{k,g}\left(\gamma_N(s,y),s,\nu\right)\vph_{k,\nu,1}^N(y),\\
A_2^N(s,y)&\doteq \sum_{k\in\mathbf{K}}\sum_{\nu\in\Del^k}\vph_{k,\nu,2}^N(s,y)|e_\nu'\nabla\phi(y)|,\\
A_3^N(s,y)
&\doteq \frac{1}{2}\sum_{k\in\mathbf{K}}\sum_{\nu\in\Del^k}\left|\frac{1}{N}\Gam_N^{k,g}\left(\gamma_N(s,y),s,\nu\right)-\Gam^{k}(\mu(s),\nu)\right|\left|e_\nu'D^2\phi(y)e_\nu\right|
\end{align*}
for $s\in [0,T]$ and $y\in\RR^d$. Note that 
\begin{align*}
\Tr(a(t)D^2\phi)=\sum_{k\in\mathbf{K}}\sum_{\nu\in\Del^k}\Gam^k(\mu(t),\nu)e_\nu'D^2\phi(y)e_\nu.
\end{align*} 
Adding and subtracting 
\begin{align*}
\frac{1}{\sqrt{N}}\sum_{k\in\mathbf{K}}\sum_{\nu\in\Del^k}\Gam_N^{k,g}(\gamma_N(s,y),s,\nu)e_\nu'\nabla\phi(y)  \mbox{ and }
\frac{1}{2N}\sum_{k\in\mathbf{K}}\sum_{\nu\in\Del^k}\Gam_N^{k,g}(\gamma_N(s,y),s,\nu)e_\nu'D^2\phi(y)e_\nu
\end{align*}
from $\cll^N_g(\phi,s,y)-\cll_g\phi(s,y)$, the triangle inequality yields 
\begin{align*}
|\cll^N_g(\phi,s,V_N^g(s))-\cll_g\phi(s,V_N^g(s))|\leq A_1^N(s,V_N^g(s))+A_2^N(s,V_N^g(s))+A_3^N(s,V_N^g(s)).
\end{align*}

We now consider the three terms on the right side separately. First consider $A_1^N(s,V_N^g(s))$. It follows from Taylor's theorem and the fact that all derivatives of $\phi$ are uniformly bounded that there exists $\kappa_1\in(0,\iy)$ such that,
\begin{align*}
\vph_{k,\nu,1}^N(V_N^g(s))\leq \frac{1}{6}\max_{\|\alpha\|=3}\sup_{x\in\RR^d}\left\|D^\alpha\phi\left(x\right)\right\|\times \left\|\frac{e_\nu}{\sqrt{N}}\right\|^3\leq \frac{\kappa_1}{N^{3/2}},
\end{align*}
where the outside maximum is taken over all mixed derivatives of order 3. Then, since $\frac{1}{\sqrt{N}}V_N^g(s)+\mu(s)\in\cls_N$, \eqref{eqn:1} implies
\begin{align*}
A_1^N(s,V_N^g(s))\leq \sum_{k=\mathbf{K}}\sum_{\nu\in\Del^k}\Gam_N^{k,g}\left(\gamma_n(s,V_N^g(s)),s,\nu\right)\frac{\kappa_1}{N^{3/2}}
\leq \frac{\kappa_2}{\sqrt{N}},
\end{align*}
for all $s\in [0,T]$ and some $\kappa_2\in(0,\iy)$. It follows that 
\begin{align*}
\int_0^T\E^N|A_1^N(s,V_N^g(s))|ds \to 0\text{ as }N\to\iy.
\end{align*}

Now consider $A_2^N(s,V_N^g(s))$. From Condition \ref{con:7} it follows that for $\kappa_3>0$, $\eps>0$,
\begin{align*}
\PP^N\left[\sup_{0\leq s\leq T}\|V_N^g(s)\|\leq \kappa_3,\ \vph_{k,\nu,2}^N(s,V_N^g(s))>\eps\right]\to 0 \text{ as }  N\to \iy.
\end{align*}
Also the $\CC$-tightness of $\{V_N^g\}$ implies that
\begin{align*}
\sup_N\PP^N\left[\sup_{0\leq s\leq T}\|V_N^g(s)\|> \kappa_3\right]\to 0 \text{ as }  \kappa_3\to \iy.
\end{align*}
Combining these two observations we see that 
\begin{align}\label{eqn:sectermconv}
\vph_{k,\nu,2}^N(s,V_N^g(s)) \to 0 \text{ in probability as } N\to\iy \text{ for all }s\in [0,T].
\end{align}
Next, from Conditions \ref{con:6}, \ref{con:7}, and noting that $h_1,h_2$ are bounded functions, we see that there is a $\kappa_4\in(0,\iy)$ such that for all $k\in\mathbf{K},\ \nu\in\Del^k,\ N\geq 1$ and $s\geq 0$
\begin{align*}
\vph_{k,\nu,2}^N(s,V_N^g(s))\leq\kappa_4(1+\|V_N^g(s)\|)\text{ a.s.}
\end{align*}
From Proposition \ref{prop:3}, 
\begin{align}\label{eqn:momentbound}
\sup_N\E^N\sup_{t\leq T}\|V_N^g(t)\|^2<\iy.
\end{align}
Thus $\{\vph_{k,\nu,2}^N(s,V_N^g(s))\}$ is uniformly integrable over $[0,T]\times\Om$  and so combining this with \eqref{eqn:sectermconv}, we have
\begin{align*}
\int_0^T\E^N|\vph^{N}_{k,\nu,2}(s,V_N^g(s))|ds\to 0\text{ as }N\to\iy .
\end{align*}
 Recalling the definition of $A^N_2$, it follows from the fact that all derivatives of $\phi$ are uniformly bounded that there exists $\kappa_5\in(0,\iy)$ such that
\begin{align*}
\int_0^T\E^N|A^N_2(s,V_N^g(s))|ds
\leq \kappa_5\sum_{k\in\mathbf{K}}\sum_{\nu\in\Del^k}\int_0^T\E^N|\vph^N_{k,\nu,2}(s,V^g_N(s))|ds\to 0\text{ as }N\to\iy.
\end{align*}

Finally, consider $A_3^N(s,V_N^g(s))$. It follows from Condition \ref{con:6} and the boundedness of the derivatives of $\phi$ that there exists a $\kappa_6\in(0,\iy)$ such that,
\begin{align*}
A_3^N(s,V_N^g(s))\leq  \kappa_6\sum_{k=1}^K\sum_{\nu\in\Del^k}\left|\frac{1}{N}\Gam_N^{k,g}(\gamma_N(s,V_N^g(s)),s,\nu)-\Gam^k(\mu(s),\nu)\right|\leq\frac{\kappa_6 C_1}{\sqrt{N}}(1+\|V_N^g(s)\|).
\end{align*}
Using the moment bound in \eqref{eqn:momentbound} once more, we have that 
\begin{align*}
\int_0^T\E|A_3^N(s,V_N^g(s))|ds\to 0.
\end{align*}
This proves \eqref{eqn:opconverge} and thus completes the proof of part $(i)$.

Now consider $(ii)$. By a similar argument as in Theorem \ref{thm:7}
\begin{align*}
V_N^g(t)= v_N+\int_0^tb_N^{U_g^N(s)}(s,V_N^g(s))ds+M^N(t)\text{ for all }N\geq 1
\end{align*}
where $M^N(t)$ is the local martingale in \eqref{eqn:27}, with $M^N_{k,\nu}$ as in \eqref{eqn:mart} with $U^N$ replaced by $U^N_g$. Recall $p$ and $C_{k_1}$ introduced below \eqref{eqn:costprelim}. By a similar estimate as in \eqref{eqn:28} there exists $\kappa_7\in(0,\iy)$ such that 
\begin{align}\label{eqn:Mbound}
\begin{split}
&\sup_{N\in\NN}\E\sup_{0\leq t\leq T}\|M_i^N\|^{2p}
\leq\sup_{N\in\NN} \kappa_7N^{p}\sum_{k\in\mathbf{K}}\sum_{\nu\in\Del^k}\E[M^N_{k,\nu}]_T^{p}\\
&\qquad = \sup_{N\in\NN}\kappa_7\sum_{k\in\mathbf{K}}\sum_{\nu\in\Del^k}\E\left(\frac{1}{N}\mathcal{N}_{k,\nu}\left(\int_0^T\Gam^k_N(\mu_N(s),U^N(s),\nu)ds\right)\right)^{p}\\
&\qquad \leq\sup_{N\in\NN}\kappa_7\sum_{k\in\mathbf{K}}\sum_{\nu\in\Del^k}\E\left(\frac{1}{N}\mathcal{N}_{k,\nu}(NTC_2)\right)^{p}
<\iy
\end{split}
\end{align}
where $C_2$ is as in \eqref{eqn:1}. Also, from Lemma \ref{H1}
\begin{align}\label{eqn:bbound}
\|b_N^{U_g^N(s)}(s,V_N^g(s))\|^{2p}\leq \kappa_7(1+\|V_N^g(s)\|^{2p}).
\end{align}
 Combining these two inequalities  implies there exists a $\kappa_8\in(0,\iy)$ such that
\begin{align*}
\E\sup_{0\leq s\leq t}\|V_N^g(s)\|^{2p}\leq \kappa_8\left(1+\int_0^tE\sup_{0\leq u\leq s}\|V_N^g(u)\|^{2p}ds\right)\text{ for all } 0\leq t\leq T.
\end{align*}
Gronwall's inequality then yields, 
\begin{align*}
\sup_{N\in\NN}\E\sup_{0\leq t\leq T}\|V_N^g(t)\|^{2p}\leq \sup_{N\in\NN}\kappa_8e^{\kappa_8 T}<\iy
\end{align*}
and thus $\{\sup_{t\leq T}\|V_N^g(t)\|^{p}\}$ is uniformly integrable. Recalling the definition of $J_N$ in \eqref{eqn:costprelim}, it follows from this uniform integrability, part $(i)$ of the theorem, the compactness of $\Lambda$, and growth condition on $k_1$ (see below \eqref{eqn:costprelim}) that
\begin{align*}
&\E\int_0^T (k_1(V_N^g(t))+k_2(\sqrt{N}U^N_g(t)))dt\to \E\int_0^T (k_1(V^g(t))+k_2(U_g(t)))dt,
\end{align*}
upon noting that $\sqrt{N}U^N_g(t)=g(t,V^g_N(t)),\ U_g(t)=g(t,V^g(t))$, and $g$ is continuous. Thus we have shown $J_N(U_g^N,v_N)\to J(U_g,v_0)$ which completes the proof of $(ii)$.
\end{proof}

\section{Near Optimal Continuous Feedback Control}\label{NearOptConCont}
In this section we give the final ingredient in the proof of Theorem \ref{thm:main}, namely Theorem \ref{thm:5}. This result says that for every $v_0\in\VV_{d-1}$ and $\eps>0$ there is a continuous $g_\eps:[0,T]\times\RR^d\to\Lambda$ such that $U_{g_\eps}$ is an $\eps$-optimal control for the diffusion control problem, i.e. $J(U_{g_\eps},v_0)\leq R(v_0)+\eps$. Recall from Section \ref{MainRes} that this result combined with Theorems \ref{thm:6} and \ref{thm:3} proved earlier will complete the proof of Theorem \ref{thm:main}. We begin with a result that says that for every $v_0\in\VV_{d-1}$, the infimum of the cost $J(\cdot,v_0)$ over all controls is the same as that over all feedback controls. The proof is similar to Theorem 4.2 in \cite{borkar1989optimal} which considers a time homogeneous setting, and so we only provide a sketch.

Recall that for every measurable $g:[0,T]\times\RR^d\to\Lambda$ there is a (feedback) control $U_g\in\cla(\Xi)$ on some system $\Xi$. Denote the family of all such feedback controls as $\cla_{fb}$. (This class depends on the initial condition $v_0$ in \eqref{eqn:4} but we suppress this in the notation).
Throughout this section we will assume that Conditions \ref{con:6} -- \ref{con:8} hold.
\begin{theorem}\label{thm:4}
Fix $v_0\in\VV_{d-1}$. Then 
\begin{align*}
R(v_0)=\inf_{U\in\cla_{fb}}J(U,v_0).
\end{align*}
\end{theorem}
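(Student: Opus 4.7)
The easy direction $R(v_0) \leq \inf_{U\in\cla_{fb}} J(U, v_0)$ is immediate, since for every feedback map $g$ the control $U_g$ is admissible on the system on which the weak solution to \eqref{eqn:4} is constructed (Theorem \ref{thm:2}). My plan is to establish the reverse inequality: given an arbitrary system $\Xi=(\Om,\clf,\PP,\{\clf_t\},W)$ and $U\in\cla(\Xi)$ with corresponding state $V$ solving \eqref{eqn:6}, I would construct a measurable feedback $g:[0,T]\times\RR^d\to\Lambda$ such that $J(U_g, v_0)\leq J(U, v_0)$.

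The construction of $g$ is standard. Using regular conditional probabilities, disintegrate the joint law of $(V(t),U(t))$ on $\RR^d\times\Lambda$ as $\rho_t(dv)\mu_{t,v}(du)$, and set
\begin{align*}
g(t,v)\doteq\int_\Lambda u\,\mu_{t,v}(du),
\end{align*}
which lies in $\Lambda$ by convexity of $\Lambda$. A jointly measurable version in $(t,v)$ can be chosen by standard measurable-selection arguments. Two structural features of the problem make $g$ work: from \eqref{eqn:16} the map $u\mapsto\eta(t,u)$ is affine, so
\begin{align*}
\int_\Lambda \eta(t,u)\,\mu_{t,v}(du)=\eta(t,g(t,v));
\end{align*}
and convexity of $k_2$ together with Jensen's inequality gives $\int_\Lambda k_2(u)\mu_{t,v}(du)\geq k_2(g(t,v))$ for each $(t,v)$.

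The crux of the argument is to show that the unique weak solution $V^g$ of \eqref{eqn:4} associated with $g$ (provided by Theorem \ref{thm:2}) satisfies $\mathrm{Law}(V^g(t))=\rho_t$ for every $t\in[0,T]$. Granting this,
\begin{align*}
J(U_g,v_0)
&=\int_0^T\int [k_1(v)+k_2(g(s,v))]\,\rho_s(dv)\,ds \\
&\leq \int_0^T\int\int[k_1(v)+k_2(u)]\,\mu_{s,v}(du)\rho_s(dv)\,ds = J(U,v_0),
\end{align*}
and the theorem follows immediately.

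The main obstacle is this marginal-matching step, since the diffusion coefficient $\sigma(t)$ is degenerate in $\RR^d$. To handle this, I would pass to the reduced $(d-1)$-dimensional processes $\hat V$ and $\hat V^g$ using the orthogonal transformation $Q$ exactly as in the proof of Theorem \ref{thm:7}; under Condition \ref{con:8}, Lemma \ref{lem:nondeg} and \eqref{eqn:26} give uniform non-degeneracy of the reduced diffusion coefficient $\alpha^{1/2}(t)$. Applying It\^o's formula to test functions $\phi\in\CC_c^\infty(\RR^{d-1})$, and then conditioning the drift on $\hat V(t)$ and using the affine dependence of $\hat\eta$ on $u$ together with the definition of $g$, shows that both $\hat\rho_t\doteq\mathrm{Law}(\hat V(t))$ and $\hat\rho_t^g\doteq\mathrm{Law}(\hat V^g(t))$ satisfy the same linear Fokker-Planck equation with common initial datum $\delta_{\hat v_0}$. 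Uniform non-degeneracy of $\alpha$ allows invoking the PDE uniqueness argument of Theorem 4.2 of \cite{borkar1989optimal}, adapted to the present time-inhomogeneous setting via the adjoint characterization of the corresponding parabolic operator, to conclude $\hat\rho_t=\hat\rho_t^g$; lifting back through $V=Q(\hat V,0)'$, using that $V(t)\cdot\mathbf{1}=0$ for both $V$ and $V^g$, transfers the equality to the $d$-dimensional marginals and closes the argument.
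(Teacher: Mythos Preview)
Your proposal is correct and follows a genuinely different route from the paper's proof. Both arguments construct the same feedback map $g$ (the barycenter of the conditional law of the control given the state) and both exploit the affine dependence of $\eta$ on $u$ together with the convexity of $k_2$. The divergence is in how the cost comparison is carried out. The paper never attempts to match the one-dimensional time marginals of $V$ and $V^g$; instead it introduces an auxiliary cost-to-go function $\phi_r(t,z)=\E^*[\int_t^T\int_\Lambda \bar k_r(V^*(s),u)\pi_s(du)\,ds\mid V^*(t)=z]$ for the feedback process, shows via parabolic PDE theory that $\phi_r$ solves a backward equation, and then applies the It\^o--Krylov formula to verify that $t\mapsto\E[Y_r(t)]$ is constant, where $Y_r(t)=\int_0^t\int_\Lambda \bar k_r(V(s),u)m_s(du)\,ds+\phi_r(t,V(t))$. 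The inequality $J(U,v_0)\geq J(U_{g^*},v_0)$ then drops out of $\E[Y_r(T)]=\phi_r(0,v_0)$ together with Jensen's inequality.

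Your marginal-matching argument is essentially a Markovian-projection (Gy\"ongy mimicking) approach: the affine drift lets you write the weak Fokker--Planck equation for the marginals of the non-Markovian $V$ with the same coefficients as that for $V^g$, and uniform non-degeneracy of $\alpha$ on the reduced $(d-1)$-dimensional space (Lemma~\ref{lem:nondeg}) yields uniqueness for this linear equation, hence equality of marginals. This is more direct and avoids the It\^o--Krylov machinery and the truncation $k_1\wedge r$; the paper's verification-function route, on the other hand, stays closer to classical control-theoretic templates and makes the role of the generator identity $\int_\Lambda(\hat\cll_u\phi_r)\,\pi(du)=\cll_{g^*}\phi_r$ explicit. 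Either approach requires passing to the non-degenerate $(d-1)$-dimensional system under Condition~\ref{con:8}, which you handle correctly.
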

\begin{proof}
Suppose $U\in\cla(\Xi)$ is an admissible control  on a system $\Xi=(\Om,\clf,\PP,\{\clf_t\},\{W_t\})$. 
As in Section \ref{LBnd} (cf. \eqref{eqn:relcon}) we denote the corresponding relaxed control by $m$.
Let $V(\cdot)$ be the corresponding unique pathwise solution to \eqref{eqn:6}. It suffices to show that there exists an admissible feedback control $U^*$ such that $J(U^*,v_0)\leq J(U,v_0)$. Define the probability measure $\nu_{v_0}\in\clp([0,T]\times\VV_{d-1}\times\Lambda)$ as
\begin{align*}
\int_{[0,T]\times\VV_{d-1}\times\Lambda}f(t,x,u)d\nu_{v_0}(t,x,u)
&= \frac{1}{T}\E\left[\int_0^T \int_\Lambda f(t,V(t),u)m_t(du)\ dt\right]
\end{align*}
for all $f\in\CC_b([0,T]\times\VV_{d-1}\times\Lambda)$. Disintegrate $\nu_{v_0}$ as
\begin{align*}
\nu_{v_0}(dt\ dx\ du)=\beta_{v_0}(dt,dx)\pi(t,x)(du)
\end{align*}
where $\beta_{v_0}\in\clp([0,T]\times\VV_{d-1})$ is the marginal distribution of $\nu_{v_0}$ on the first two coordinates
 and $\pi:[0,T]\times\VV_{d-1}\to\clp(\Lambda)$ is the corresponding regular conditional law. Define $g^*:[0,T]\times\RR^d\to\Lambda$ as $g^*(t,x)=\int_\Lambda u\pi(t,\Pi_{\VV_{d-1}}(x))(du)$ where $\Pi_{\VV_{d-1}}:\RR^d\to\VV_{d-1}$ is the projection of $\RR^d$ onto $\VV_{d-1}$. Let $U_{g^*}$ be the feedback control associated with the map $g^*$ given on some system $\Xi^*$ and let $V^*$ be the corresponding state process given as the solution of \eqref{eqn:4} with $g$ replaced by $g^*$. 
Let for $t\in[0,T],\ \pi_t\doteq \pi(t,V^*(t))$. For $(t,z)\in[0,T]\times\VV_{d-1}$, $r \in (0, \infty)$ and $\bar{k}_r(v,u) \doteq k_1(v)\wedge r+k_2(u)$ define
\begin{align*}
\phi_{r}(t,z) &= \E^*\left[\int_t^T \int_\Lambda \bar{k}_r(V^*(s),u)\pi_s(du)ds\Big |
V^*(t)=z\right],\\
Y_{r}(t) &= \int_0^t   \int_\Lambda \bar{k}_r(V(s),u)m_s(du)ds+ \phi_{r}(t,V(t)).
\end{align*}
It follows using the equivalent description of a weak solution of \eqref{eqn:4} in terms of a $(d-1)$-dimensional SDE with uniformly non-degenerate diffusion coefficient as in the proof of Theorem \ref{thm:2} and classical PDE results (cf. Section III.4.2 of \cite{bensoussan2011stochastic}) that $\phi_{r}$ solves  the equation
\begin{align}\label{eqn:pde}
\int_\Lambda \bar{k}_r(x,u)\pi(t,x)(du)+\frac{\partial}{\partial t}\phi_{r}(t,x)+(\cll_{g^*}\phi_{r})(t,x)=0
\end{align}
where $\cll_{g^*}$ is the generator for $V^*$ given by the right side of \eqref{eqn:ggen} with $g$ replaced by $g^*$.
From It\^{o}-Krylov formula(cf. \cite{Krylovbook}) we have
\begin{align}\label{eqn:ydiff}
\begin{split}
\E[Y_{r}(t)]-\E[Y_{r}(0)]
& = \E\int_0^t  \Big(\int_\Lambda\bar{k}_r(V(s),u)m_s(du)+\frac{\partial}{\partial t}\phi_{r}(s,V(s))\\
&\quad\quad +(\hat \cll_{U(s)}\phi_{r})(s,V(s))\Big)ds.
\end{split}
\end{align}
where for $u\in\Lambda$, $\hat \cll_u$ is the ``controlled generator'' defined as
\begin{align*}
\hat \cll_u\phi_{r}(t,x)
&= \nabla_x\phi_{r}(t,x)(\eta(t,u)+\beta(t)x)+\frac{1}{2}\Tr(\sigma(t)D^2\phi_{r}(t,x)\sigma'(t)).
\end{align*}
By the definition of $\pi$, and since $u\mapsto \hat \cll_u \phi_{r}(t,x)$ is linear we see that
$$\int_{\Lambda} (\hat \cll_u\phi_{r} )(s,x) \pi(s,x)(du) = (\cll_{g^*}\phi)(s,x), \; (s,x) \in [0, T] \times \VV_{d-1}.$$
From this it follows that
\begin{align*}
&\E\int_0^t   \left(\int_\Lambda \bar{k}_r(V(s),u)m_s(du)+(\hat \cll_{U(s)}\phi_{r})(s,V(s))\right)ds\\
&\qquad=\E\int_0^t   \left(\int_\Lambda \bar{k}_r(V(s),u)\pi(s,V(s))(du)+(\cll_{g^*}\phi_{r})(s,V(s))\right)ds.
\end{align*}
Thus \eqref{eqn:pde} implies that the right hand side of \eqref{eqn:ydiff} is 0 and thus $\E[Y_{r}(t)]=\E[Y_{r}(0)]=\phi_{r}(0,v_0)$ for all $t\in [0,T]$. From the convexity of $k_2$ we see that
\begin{align*}
\phi_{r}(0,v_0)
&= \E^*\left[\int_0^{T}  \int_\Lambda\bar{k}_r(V^*(s),u)\pi_s(du) ds\right]\\
&\geq \E^*\left[\int_0^{T}   \bar{k}_r(V^*(s),g^*(s,V^*(s)) ds\right]\\
&\doteq J_{r}(U_{g^*},v_0).
\end{align*}
Using monotone convergence theorem it now follows that 
\begin{align*}
	J(U,v_0)&=\lim_{r\to\infty}\E[Y_{r}(T)]
= \lim_{r\to\infty}  \E Y_{r}(0)\\
&\quad \quad  = 
\lim_{r\to\infty} \phi_{r}(0,v_0)\geq \lim_{r\to\infty} J_{r}(U_{g^*},v_0) =
J(U_{g^*},v_0).
\end{align*}
 The result follows.
\end{proof}
We will next show in Theorem \ref{thm:5} below that the above theorem can be strengthened in that the class $\cla_{fb}$ can be replaced by the smaller class $\cla_{fb}^c$ of all \emph{continuous} feedback controls, i.e. feedback controls for which that corresponding map $g$ is continuous. 
Recall the orthogonal matrix $Q$ defined in Section \ref{DiffCont}. 
Fix $v_0\in\VV_{d-1}$ and let $g^*:[0,T]\times\RR^d\to\Lambda$ be a measurable map. Let $U_{g^*}$ be the corresponding feedback control given on some system $\Xi=(\Om,\clf,\PP,\{\clf_t\},\{W_t\})$ and let $V^*$ be the solution of \eqref{eqn:4} with $g$ replaced by $g^*$ on the right side. Define the $(d-1)$-dimensional process $\hat{V}^*$ such that $V^*=Q\left(\begin{smallmatrix}\hat{V}^*\\ 0\end{smallmatrix}\right)$ and the map $\hat{g}^*:[0,T]\times\RR^{d-1}\to\Lambda$ as $\hat{g}^*(t,v)= g^*(t,Q\left(\begin{smallmatrix}v\\0\end{smallmatrix}\right))$ for $v\in\RR^{d-1}$. Then,
\begin{align}\label{eqn:22}
\hat{V}^*(t)
&= \hat{v}_0+\int_0^t\hat{\eta}(s,\hat{g}^*(s,\hat{V}^*(s)))ds+\int_0^t\hat{\beta}(s)V^*(s)ds+\int_0^t\alpha^{1/2}(s)d\hat{W}(s)
\end{align}
where $\hat{\eta},\ \hat{\beta}$, and $\alpha$ are as in \eqref{eqn:hatdef1}, \eqref{eqn:hatdef2}, and \eqref{eqn:alphadef}, respectively. In addition, $v_0=Q\left(\begin{smallmatrix}\hat{v}_0\\0\end{smallmatrix}\right)$ and $Q'W=\left(\begin{smallmatrix}\hat{W}\\B_d\end{smallmatrix}\right)$.
Define $\varrho\in\clp([0,T]\times\RR^{d-1})$ as 
\begin{align}\label{eqn:24}
\varrho(A)\doteq\bar{c}\int_Ae^{-\frac{(\|x\|^2+t^2)}{2}}dxdt
\end{align}
for $A\in\clb([0,T]\times\RR^{d-1})$ where $\bar{c}$ is a normalizing constant. We denote by $\bar{\clb}$ the Lebesgue $\sigma$-field on $[0,T]\times\RR^{d-1}$, namely the completion of $\clb([0,T]\times\RR^{d-1})$ with respect to the Lebesgue measure.

\begin{lemma}\label{lem:2}
For each $n\in\NN$ there exists a $\bar{\clb}$-measurable function $\hat{g}_n:[0,T]\times\RR^{d-1}\to\Lambda$ and compact sets $A_n\in\clb([0,T]\times\RR^{d-1})$ such that $\hat{g}_n$ is continuous and,
\begin{align}\label{eqn:20}
\{(s,v)\in[0,T]\times\RR^{d-1}:\hat{g}^*(s,v)\neq \hat{g}_n(s,v)\}\subset A_n^c\quad\text{ and }\quad \varrho(A_n^c)\leq\frac{1}{2^{n+1}}.
\end{align}
\end{lemma}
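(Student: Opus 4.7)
The plan is to invoke Lusin's theorem on $\hat g^*$ and then extend the resulting continuous restriction to a globally continuous $\Lambda$-valued map. First I would verify the standing measurability and regularity hypotheses needed to apply Lusin's theorem. The measure $\varrho$ on $[0,T]\times\RR^{d-1}$ is a probability measure with a continuous strictly positive density with respect to Lebesgue measure, hence a finite regular (Radon) Borel measure. The map $\hat g^*:[0,T]\times\RR^{d-1}\to\Lambda\subset\RR^\ell$ is $\bar\clb$-measurable: $g^*$ was built from the disintegration $\pi$ (which is jointly measurable) composed with the continuous projection $\Pi_{\VV_{d-1}}$, and $\hat g^*$ is just a further composition with the continuous linear map $v\mapsto Q\bigl(\begin{smallmatrix}v\\0\end{smallmatrix}\bigr)$.

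Next I would apply Lusin's theorem (in its form for measurable functions taking values in a separable metric space, e.g.\ Fremlin or Rudin's Real and Complex Analysis): for each $n\in\NN$ there exists a compact set $A_n\in\clb([0,T]\times\RR^{d-1})$ with $\varrho(A_n^c)\leq 2^{-(n+1)}$ such that the restriction $\hat g^*|_{A_n}$ is continuous.

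It remains to extend $\hat g^*|_{A_n}$ to a continuous $\Lambda$-valued map on the whole space. By Tietze's extension theorem, applied coordinate by coordinate to the $\ell$ components of $\hat g^*|_{A_n}$, there is a continuous function $\tilde g_n:[0,T]\times\RR^{d-1}\to\RR^\ell$ with $\tilde g_n=\hat g^*$ on $A_n$. This extension need not take values in $\Lambda$, so I would compose with the metric projection $\Pi_\Lambda:\RR^\ell\to\Lambda$, which is well-defined and $1$-Lipschitz because $\Lambda$ is a nonempty closed convex subset of $\RR^\ell$. Setting $\hat g_n\doteq \Pi_\Lambda\circ \tilde g_n$, we obtain a continuous map into $\Lambda$ which agrees with $\hat g^*$ on $A_n$ (since $\Pi_\Lambda$ is the identity on $\Lambda$). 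This yields both conclusions in \eqref{eqn:20}.

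The only real subtlety is the codomain constraint: a naive Tietze extension may leave $\Lambda$. This is handled by the compact convexity of $\Lambda$ via the continuous retraction $\Pi_\Lambda$; alternatively one may invoke Dugundji's extension theorem, which produces a continuous extension whose range is contained in the convex hull of the image, automatically remaining inside $\Lambda$. Either route is routine, so this lemma is essentially a direct citation of Lusin's theorem plus a one-line extension argument.
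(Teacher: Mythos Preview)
Your proposal is correct and follows essentially the same route as the paper: apply Lusin's theorem to $\hat g^*$ with respect to the finite regular measure $\varrho$, then compose with the metric projection $\Pi_\Lambda$ onto the closed convex set $\Lambda$ to force the codomain constraint. The only cosmetic difference is that the paper invokes Rudin's formulation of Lusin's theorem, which already packages the Tietze extension and hands back a globally continuous $\hat g_n':[0,T]\times\RR^{d-1}\to\RR^\ell$ in one step, whereas you separate the two steps (Lusin for the compact $A_n$, then Tietze coordinatewise); both are standard and equivalent.
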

\begin{proof}
From Lusin's theorem (c.f. 2.24 of \cite{rudin1986real}) for each $n\in\NN$ there exists a continuous function $\hat{g}'_n:[0,T]\times\RR^{d-1}\to\RR^\mathbf{\ell}$ such that \eqref{eqn:20} is satisfied. Since $\Lambda$ is a closed convex set, there is a continuous map $\Pi_\Lambda:\RR^\mathbf{\ell}\to\Lambda$ such that $\Pi_\Lambda(u)=u$ for all $u\in\Lambda$. Define $\hat{g}_n:[0,T]\times\RR^{d-1}\to\Lambda$ as $\hat{g}_n(s,v)=\Pi_\Lambda(\hat{g}_n'(s,v))$. The result now follows on noting that
\begin{align*}
\{(s,v):\hat{g}_n(s,v)=\hat{g}^*(s,v)\}\supset\{(s,v):\hat{g}_n'(s,v)=\hat{g}^*(s,v)\}.
\end{align*}
\end{proof}

Let $\{v_n\}\subset\VV_{d-1}$ be such that $v_n\to v_0$ and let $\Xi^n=(\Om^n,\clf^n,\{\clf^n_t\},\PP^n,\{W^n\})$ be a system on which the process $V^n$ is the unique (weak) solution to
\begin{align}\label{eqn:21}
V^n(t)=v_n+\int_0^t\eta(s,g_n(s,V^n(s)))ds+\int_0^t\beta(s)V^n(s)ds+\int_0^t\sigma(s)dW^n(s)
\end{align}
where $g_n:[0,T]\times \VV_{d-1}\to\Lambda$ is the continuous function defined as $g_n(s,Q\left(\begin{smallmatrix}v\\0\end{smallmatrix}\right))=\hat{g}_n(s,v),\ v\in\RR^{d-1}$. 
We can extend $g_n$ continuously to $[0,T]\times \RR^{d}$ as before using the projection map
$\Pi_{\VV_{d-1}}$.
Defining $\hat{V}^n$ as $Q'V^n=\left(\begin{smallmatrix}\hat{V}^n\\0\end{smallmatrix}\right)$, we can write
\begin{align*}
\hat{V}^n(t)
&= \hat{v}_n+\int_0^t\hat{\eta}(s,\hat{g}_n(s,\hat{V}^n(s)))ds+\int_0^t\hat{\beta}(s)\hat{V}^n(s)ds+\int_0^t\alpha^{1/2}(s)d\hat{W}^n(s)
\end{align*}
where $Q'v_n=\left(\begin{smallmatrix}\hat{v}_n\\0\end{smallmatrix}\right)$ and $\hat{W}^n$ is a $(d-1)$-dimensional Brownian motion.
\begin{theorem}\label{thm:5}
Given $v_0\in\VV_{d-1}$, let $V^*$ be as introduced in  \eqref{eqn:22}. Let $v_n,\ g_n$ and $\{V^n\}$ be as introduced above. Then $V^n\Rightarrow V^*$ as a sequence of $\CC([0,T]:\RR^d)$-valued random variables.
\end{theorem}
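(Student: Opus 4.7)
The plan is to reduce the claim to a $(d-1)$-dimensional, uniformly non-degenerate SDE and then compare Girsanov densities on a common probability space. First I would use the orthogonal matrix $Q$ from Section \ref{DiffCont} to write $Q'V^n = (\hat V^n, 0)'$ and $Q'V^* = (\hat V^*, 0)'$, so that $V^n\Rightarrow V^*$ in $\CC([0,T]:\RR^d)$ is equivalent to $\hat V^n \Rightarrow \hat V^*$ in $\CC([0,T]:\RR^{d-1})$. The processes $\hat V^n$ and $\hat V^*$ satisfy the SDE \eqref{eqn:22} (with $\hat g_n,\hat v_n$ replacing $\hat g^*,\hat v_0$ for $\hat V^n$), and by Lemma \ref{lem:nondeg} and \eqref{eqn:26} both $\alpha^{1/2}$ and $\alpha^{-1/2}$ are uniformly bounded on $[0,T]$.

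Next I would set up the reference measures. On a single probability space carrying a $(d-1)$-dimensional Brownian motion $W$, let $\tilde V^n$ and $\tilde V$ be the Gaussian processes solving $dZ = \hat\beta(t) Z\,dt + \alpha^{1/2}(t)\,dW$ with initial conditions $\hat v_n$ and $\hat v_0$, driven by the same $W$. Because of this coupling, the deterministic identity $\tilde V^n(t) - \tilde V(t) = \Phi(t)(\hat v_n - \hat v_0)$, where $\Phi$ is the fundamental matrix of $\hat\beta$, shows $\tilde V^n \to \tilde V$ uniformly in $t$. For measurable $h:[0,T] \times \RR^{d-1}\to \Lambda$ and $Z\in\{\tilde V^n, \tilde V\}$, set $\theta_{h,Z}(s) = \alpha^{-1/2}(s)\hat\eta(s, h(s, Z(s)))$ and let $L_T(h, Z) = \exp(\int_0^T \theta_{h,Z}'dW - \tfrac12\int_0^T \|\theta_{h,Z}\|^2 ds)$. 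Compactness of $\Lambda$ together with \eqref{eqn:26} render $\theta_{h,Z}$ uniformly bounded; hence by Novikov's theorem each $L_T(h, Z)$ lies in every $L^p(\PP)$, uniformly in $n$. Girsanov's theorem then identifies the law of $\hat V^n$ with that of $\tilde V^n$ under $L_T(\hat g_n, \tilde V^n)\,d\PP$ and the law of $\hat V^*$ with that of $\tilde V$ under $L_T(\hat g^*, \tilde V)\,d\PP$. I would interpolate through the intermediate process $\hat V^{(n)}$, the unique weak solution of \eqref{eqn:4} with feedback $\hat g^*$ and initial condition $\hat v_n$ (whose law coincides with that of $\tilde V^n$ under $L_T(\hat g^*, \tilde V^n)\,d\PP$), and then prove (A) $\hat V^n \Rightarrow \hat V^{(n)}$ and (B) $\hat V^{(n)} \Rightarrow \hat V^*$; combining these yields the theorem.

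By standard Girsanov estimates (It\^o isometry on the martingale part of the log-density, the uniform bound on $\theta_{h,Z}$, and uniform $L^p$-integrability of the densities), both (A) and (B) reduce to showing that $\int_0^T \|\theta_{h_n, Z_n}(s) - \theta_{h, Z}(s)\|^2\,ds \to 0$ in probability for the appropriate pairs. For (A) the pair shares the reference $\tilde V^n$, so by Lemma \ref{lem:2} the integrand vanishes off $\{(s,\tilde V^n(s))\in A_n^c\}$ and is uniformly bounded; hence it suffices to prove $\E\int_0^T 1_{A_n^c}(s,\tilde V^n(s))\,ds \to 0$. For (B) the pair shares $\hat g^*$ but uses references related by the deterministic shift $\xi_n(\cdot)=\Phi(\cdot)(\hat v_n-\hat v_0)\to 0$, so the displayed integral is controlled by $\int_0^T \E\|\hat g^*(s,\tilde V(s)+\xi_n(s))-\hat g^*(s,\tilde V(s))\|^2\,ds$, which vanishes by $L^2$-continuity of translations of the bounded measurable function $\hat g^*(s,\cdot)$ weighted by the (non-degenerate, for $s>0$) Gaussian density of $\tilde V(s)$, followed by dominated convergence on $[0,T]$.

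The chief obstacle is (A): converting the Lusin-type smallness $\varrho(A_n^c)\le 2^{-(n+1)}$ into smallness of the expected occupation measure of $\tilde V^n$, despite the density of $\tilde V^n(s)$ blowing up as $s\downarrow 0$. My strategy is to split $[0,T]=[0,\delta]\cup[\delta,T]$: the contribution from $[0,\delta]$ is trivially at most $\delta$, while on $[\delta,T]$ the uniform non-degeneracy of $\alpha$ implies that the covariance of $\tilde V^n(s)$ is bounded below by $c\delta I$, giving a Gaussian density bounded by some $C_\delta$ uniformly in $n$. Then, for any $R>0$,
\[
\E\int_\delta^T 1_{A_n^c}(s,\tilde V^n(s))\,ds \le C_\delta\,\mathrm{Leb}\bigl(A_n^c\cap([\delta,T]\times B(R))\bigr) + T\sup_{n,\,s\in[\delta,T]}\PP\bigl(\|\tilde V^n(s)\|>R\bigr).
\]
On $[\delta,T]\times B(R)$ the $\varrho$-density is bounded below by a positive constant depending only on $R$ and $T$, so the first term is a constant multiple of $\varrho(A_n^c)$ and tends to zero by Lemma \ref{lem:2}; the second term is a uniform Gaussian tail which becomes small once $R$ is large. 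Letting $n\to\infty$, then $R\to\infty$, then $\delta\to 0$ completes the estimate. This occupation-measure step, rather than any Girsanov manipulation per se, is the central hurdle.
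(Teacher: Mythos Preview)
Your argument is correct and constitutes a genuinely different proof from the paper's. The paper proceeds by a tightness/relaxed-control route: it embeds the feedback controls into occupation measures $m^n\in\clm([0,T]\times\RR^{d-1}\times\Lambda)$, extracts a subsequential limit $(\hat V,\hat W,m)$, and then devotes the bulk of the proof to identifying $m_s$ as the Dirac mass at $(\hat V(s),\hat g^*(s,\hat V(s)))$, after which weak uniqueness of \eqref{eqn:22} finishes. To carry out this identification it uses the nested Lusin sets $B_n=\cap_{m\ge n}A_m$, a Girsanov comparison to the Gaussian reference process, and the weight $e^{-\kappa/s}$ (cf. \eqref{eqn:30}--\eqref{eqn:34}) to tame the short-time density blow-up. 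Your proof bypasses the relaxed-control machinery entirely: you compare the Girsanov densities $L_T(\hat g_n,\tilde V^n)$, $L_T(\hat g^*,\tilde V^n)$, $L_T(\hat g^*,\tilde V)$ directly on a single probability space, and your time-splitting $[0,\delta]\cup[\delta,T]$ plays exactly the role of the paper's $e^{-\kappa/s}$ weight. The occupation-measure estimate you flag as the ``central hurdle'' is essentially the same computation as the paper's \eqref{eqn:34}, carried out for the reference process rather than for $\hat V^n$ itself; working with $A_n$ rather than $B_n$ is possible precisely because you are not passing to a weak limit of $m^n$. Your route is shorter and in fact yields a stronger conclusion for step (A), namely that the laws of $\hat V^n$ and $\hat V^{(n)}$ are close in total variation. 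One cosmetic point: writing ``(A) $\hat V^n\Rightarrow\hat V^{(n)}$'' is imprecise since both sides are $n$-indexed; what you actually prove (and need) is $\|L_T(\hat g_n,\tilde V^n)-L_T(\hat g^*,\tilde V^n)\|_{L^1}\to 0$, which combines with (B) in the obvious way.
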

\begin{proof}
It suffices to show that $\hat{V}^n\Rightarrow \hat{V}^*$. Let $G=\RR^{d-1}\times\Lambda$ and define $m^n\in\clm([0,T]\times G)$ as
\begin{align*}
m^n(A\times B\times C)\doteq\int_0^T 1_A(s)1_B(\hat{V}^n(s))1_C(\hat{g}_n(s,\hat{V}^n(s)))ds,
\end{align*}
where $A\in\clb([0,T]),\ B\in\clb(\RR^{d-1}),\ C\in\clb(\Lambda)$. Since $u \mapsto \hat{\eta}(s,u)$ is a linear function  and $\int_0^t\hat{g}_n(s,\hat{V}^n(s))ds=\int_0^tum^n(ds\ dv\ du)$, $\hat{V}^n(t)$ can be expressed as
\begin{align*}
\hat{V}^n(t)&= \hat{v}_n+\int_0^t\hat{\eta}(s,u)m^n(ds\ dv\ du)+\int_0^t\hat{\beta}(s)\hat{V}^n(s)ds+\int_0^t\alpha^{1/2}(s)d\hat{W}^n(s).
\end{align*}
We  can disintegrate $m^n$ as $m_t^n(dv\ du)dt$, where $m_t^n(dv\ du)=\del_{\hat{V}^n(t)}(dv)\del_{\hat{g}_n(t,\hat{V}^n(t))}(du)$ and $\del_x$ is the Dirac measure at the point $x$. From the boundedness of $\hat{\eta},\ \hat{\beta},$ and $\alpha^{1/2}$, we get by a standard application of Gronwall's inequality that for some $C\in(0,\iy)$
\begin{align}\label{eqn:momentbound2}
\E[\hat{V}^n(t)]\leq C(1+\hat{v}_n)e^{Ct},\text{ for all }n\in\NN,\ t\in[0,T].
\end{align}
Using this moment bound and a similar bound on the increments of $\hat{V}^n$ we have that $\{\hat{V}^n\}$ is a tight sequence of $\CC([0,T]:\RR^{d-1})$-valued random variables. Now the tightness of $\{m^n\}$ as a sequence of $\clm([0,T]\times G)$-valued random variables is immediate since the first marginal is the Lebesgue measure (i.e. $m^n([0,t)\times G)=t$ for all $t\in[0,T]$), $\{\hat{V}^n\}$ is tight, and $\Lambda$ is compact. Also, the tightness of $\{\hat{W}^n\}$ as a sequence of $\CC([0,T]:\RR^{d-1})$-valued random variables is immediate since $\hat{W}^n$ is a standard Brownian motion for each $n$. Therefore $\{\hat{V}^n,\hat{W}^n,m^n\}$ is a tight collection of $\CC([0,T]:\RR^{2(d-1)})\times\clm([0,T]\times G)$-valued random variables.

Suppose $\{\hat{V}^n,\hat{W}^n,m^n\}$ converges along a subsequence (also denoted $\{n\}$) to a process, $\{\hat{V},\hat{W},m\}$. Let $(\Om',\clf',\PP')$ be the probability space on which the limit processes are defined. Then $\hat{W}$ is a $\PP'$-Brownian motion and using the continuity of $\hat{\eta},\ \hat{\beta}$ and $\alpha^{1/2}$ we see that $(\hat{V},\hat{W},m)$ satisfy
\begin{align*}
\hat{V}(t) = \hat{v}_0+\int_0^t\hat{\eta}(s,u)dm(ds\ dv\ du)+\int_0^t\hat{\beta}(s)\hat{V}(s)ds+\int_0^t\alpha^{1/2}(s)d\hat{W}(s)
\end{align*}
$\PP'$-almost surely.

Define $\clf_t'=\sigma\{\hat{V}_s,\hat{W}_s,m([0,s]\times A):0\leq s\leq t, A \in \clb(G)\}$. It is easy to check that $\{\hat{W}_t\}$ is a $\{\clf_t'\}$-martingale. Indeed, let $k\in\NN$ and $\clh:(\RR^{2(d-1)}\times\RR)^k\to\RR$ be a bounded and continuous function. Define $\clz_t\doteq(\hat{V}_t,\hat{W}_t,m(t,f))$ and $\clz_t^n\doteq(\hat{V}_t^n,\hat{W}_t^n,m^n(t,f))$,
where $f \in \CC_b(G)$ and $\nu(t,f) = \int_0^t f(v,u) \nu(ds\ dv\ du)$ for $\nu = m, m^n$. Then for $s\leq t\leq T$ and $0\leq t_1\leq\ldots t_k\leq s$,
\begin{align*}
\E'\clh(\clz_{t_1},\ldots,\clz_{t_k})[\hat{W}_t-\hat{W}_s]
= \lim_{n\to\iy}\E^n\clh(\clz^n_{t_1},\ldots,\clz^n_{t_k})[\hat{W}^n_t-\hat{W}^n_s]
=0,
\end{align*} 
where the second equality uses the fact that $\hat{W}^n$ is a $\{\clf_t^n\}$-Brownian motion and $\clz^n_t$ is $\{\clf_t^n\}$-adapted. This proves that $(\hat{W}_t)$ is an $\{\clf_t'\}$-martingale.

Note that $m, m^n$ can be disintegrated as
$$m(ds\ dv\ du) = m_s(dv\ du) ds,\; m^n(ds\ dv\ du) = m_s^n(dv\ du) ds.$$
We will now argue that for all $t\in[0,T]$,
\begin{align}\label{eqn:37}
\int_0^t\int_Gum_s(dv\ du)ds=\int_0^t\hat{g}^*(s,\hat{V}(s))ds\quad\text{a.s. }\PP'.
\end{align}
Note that \eqref{eqn:37}, the linearity of $\hat{\eta}$ in $u$, together with the weak-uniqueness of solutions to \eqref{eqn:22} (which was established in Section \ref{Feedback}) completes the proof of the theorem. 

Note that for any $f\in\CC_b(\RR^{d-1})$ we have $\int_0^t\int_Gf(v)m_s^n(dv\ du)ds=\int_0^tf(\hat{V}^n(s))ds$. Since $(m^n,\hat{V}^n)\Rightarrow(m,\hat{V})$, we have for any such $f$
\begin{align*}
\int_0^t\int_Gf(v)m_s(dv\ du)ds=\int_0^tf(\hat{V}(s))ds\quad\text{ for all }t\in[0,T],\ \text{a.s. }\PP'.
\end{align*}
Denote by $\hat{m}_t^{i},\ i=1,2$ the marginal of $m_t$ on its $i$-th coordinate. Then the above display can be rewritten as
\begin{align*}
\int_0^t\int_{\RR^{d-1}}f(v)\hat{m}_s^1(dv)ds=\int_0^tf(\hat{V}(s))ds,\text{ for }t\in[0,T], \text{ a.s. }\PP',\text{ for every }f\in\CC_b(\RR^{d-1}:\RR).
\end{align*}
This shows that
\begin{align}\label{eqn:36}
\hat{m}_t^1(dv)=\del_{\hat{V}(t)}(dv),\ [\lambda\otimes\PP']\text{ a.e. }(t,w')
\end{align}
where $\lambda$ is the Lebesgue measure on $[0,T]$.

Recall the definition of $A_n$ from Lemma \ref{lem:2}  and $\varrho$ from \eqref{eqn:24}. Define $B_n\doteq\cap_{m=n}^\iy A_n$. Then
\begin{align*}
\varrho(B_n)\geq 1-\frac{1}{2^n}\quad\text{ for all }n\geq1
\end{align*}
and $\hat{g}^*(s,v)=\hat{g}_n(s,v)=\hat{g}_{n+1}(s,v)=\ldots$ for all $(s,v)\in B_n$. Since $\{\hat{v}_n\}$ is bounded we have from the moment bound in \eqref{eqn:momentbound2} that for every $\eps>0$, there is a compact $F\subset\RR^{d-1}$ such that
\begin{align}\label{eqn:29}
\sup_{n\in\NN}\sup_{0\leq t\leq T}\PP^n[\hat{V}^n(t)\in F^c]\leq\frac{\eps}{2}.
\end{align}
Note that this says in particular that $\{\hat{v}_n\}\subset F$. For $t\in[0,T]$ and $v\in\RR^{d-1}$, let $p(t,v,z)$ be the transition probability density of the Gaussian random variable $\hat{V}_0^v(t)$ given as the solution of the SDE
\begin{align*}
\hat{V}_0^{v}(t)=v+\int_0^t\hat{\beta}(s)\hat{V}^v_0 ds+\int_0^t\alpha^{1/2}(s)d\hat{W}(s).
\end{align*} 
It is easy to see that there exists a function $\Psi:[0,T]\to\RR_+$ and $\kappa\in(0,\iy)$ such that 
\begin{align}\label{eqn:30}
\sup_{v,z\in F}p(t,v,z)\leq\Psi(t),\ t\in[0,T],\text{ and }
\int_0^Te^{-\kappa/t}\Psi(t)dt<\iy.
\end{align}

Using the boundedness of $\hat{\eta}$ and $\alpha^{-1/2}$, Girsanov's theorem, and the Cauchy-Schwarz inequality we see that there exists a $\theta\in(0,\iy)$ such that for any bounded measurable $f:[0,T]\times\RR^{d-1}\to\RR$ and $t\in[0,T]$
\begin{align}\label{eqn:31}
\E^n\left|\int_0^tf(s,\hat{V}^n(s))ds\right|\leq \theta\left[\E'\left(\int_0^tf(s,\hat{V}^{v_n}_0(s))^2ds\right)\right]^{1/2}.
\end{align} 
Since $e^{-\kappa/s}\psi(s)1_F(v)dvds$ is a finite measure on $[0,T]\times\RR^{d-1}$ that is absolutely continuous with respect to $\varrho$, we have for any $\eps>0$ a $n_0\in\NN$ such that
\begin{align}\label{eqn:32}
\int_0^T\int_{\RR^{d-1}}1_{B_{n_0}^c}(s,v)e^{-\kappa/s}1_{F}(v)\Psi(s)dvds<\frac{\eps^2}{4\theta^2}.
\end{align}
Together with \eqref{eqn:30}, \eqref{eqn:32} implies
\begin{align}\label{eqn:33}
\E'\int_0^Te^{-\kappa/s}1_{B_{n_0}^c}(s,\hat{V}_0^{v}(s))1_{F}(\hat{V}_0^{v}(s))ds<\frac{\eps^2}{4\theta^2}
\end{align}
for all $v\in F$. From \eqref{eqn:29}, \eqref{eqn:31}, and \eqref{eqn:33} we have
\begin{align}\label{eqn:34}
\begin{split}
\E^n\int_0^Te^{-\kappa/2s}1_{B_{n_0}^c}(s,\hat{V}^n(s))ds
&< \E^n\int_0^T1_F(\hat{V}^n(s))e^{-\kappa/2s}1_{B_{n_0}^c}(s,\hat{V}^n(s))ds+\frac{\eps}{2}\\
&\leq\theta\left[\E'\left(\int_0^T1_F(\hat{V}^{v_n}_0(s))e^{-\kappa/s}1_{B_{n_0}^c}(s,\hat{V}^{v_n}_0(s))ds\right)\right]^{1/2}+\frac{\eps}{2}\\
&\leq\eps. 
\end{split}
\end{align}
Denote by $\hat{m}_t^{n,i}$ the marginal of $m_t^n$ on the $i$-th coordinate for $i=1,2$. Then, for any $n\geq n_0,\ t\in[0,T],\ f\in\CC(\Lambda)$, and $h\in\CC([0,T])$
\begin{align*}
\int_0^t\int_Ge^{-\kappa/2s}h(s)f(u)m_s^n(dv\ du)ds
&= \int_0^t\int_{\RR^{d-1}}e^{-\kappa/2s}h(s)f(\hat{g}_n(s,v))\hat{m}_s^{n,1}(dv)ds\\
&= \int_0^t\int_{\RR^{d-1}}1_{B_{n_0}}(s,v)e^{-\kappa/2s}h(s)f(\hat{g}_{n_0}(s,v))\hat{m}_s^{n,1}(dv)ds\\
&\qquad+\int_0^t\int_{\RR^{d-1}}1_{B_{n_0}^c}(s,v)e^{-\kappa/2s}h(s)f(\hat{g}_n(s,v))\hat{m}_s^{n,1}(dv)ds,
\end{align*}
where the second equality follows on noting that for $(s,v)\in B_{n_0}$, $\hat{g}_n(s,v)=\hat{g}_{n_0}(s,v)$ when $n\geq n_0$. Thus
\begin{align}\label{eqn:35}
\begin{split}
&\left|\int_0^t\int_Ge^{-\kappa/2s}h(s)f(u)m_s^n(dv\ du)ds-\int_0^t\int_{\RR^{d-1}}e^{-\kappa/2s}h(s)f(\hat{g}_{n_0}(s,v))\hat{m}_s^{n,1}(dv)ds\right|\\
&\qquad\leq 2\|f\|_\iy\|h\|_\iy\int_0^t\int_{\RR^{d-1}}1_{B_{n_0}^c}(s,v)e^{-\kappa/2s}\hat{m}_s^{n,1}(dv)ds.
\end{split}
\end{align}
 It follows from \eqref{eqn:34} that the expectation of \eqref{eqn:35} is bounded above by $2\|f\|_\iy\|h\|_\iy\eps$ and thus, letting $n\to\iy$
\begin{align*}
&\E'\left|\int_0^t\int_Ge^{-\kappa/2s}h(s)f(u)m_s(dv\ du)ds-\int_0^t\int_{\RR^{d-1}}e^{-\kappa/2s}h(s)f(\hat{g}_{n_0}(s,v))\hat{m}^1_s(dv)ds\right|\\
&\qquad\leq 2\|f\|_\iy\|h\|_\iy\eps.
\end{align*}
Therefore, since $\hat{g}_{n_0}(s,v)=\hat{g}^*(s,v)$ on $B_{n_0}$
\begin{align*}
&\E'\left|\int_0^t\int_Ge^{-\kappa/2s}h(s)f(u)m_s(dv\ du)ds-\int_0^t\int_{\RR^{d-1}}e^{-\kappa/2s}h(s)f(\hat{g}^*(s,v))\hat{m}^1_s(dv)ds\right|\\
&\qquad\leq 2\|f\|_\iy\|h\|_\iy\left[\eps+\E'\int_0^t\int_{\RR^{d-1}}1_{B^c_{n_0}}(s,v)e^{-\kappa/2s}\hat{m}^1_s(dv)ds\right].
\end{align*}
Since $B_{n_0}^c$ is open, it then follows from \eqref{eqn:34}
\begin{align*}
\E'\int_0^t\int_{\RR^{d-1}}1_{B^c_{n_0}}(s,v)e^{-\kappa/2s}\hat{m}^1_s(dv)ds
\leq \liminf_{n\to\iy}\E^n\int_0^t\int_{\RR^{d-1}}1_{B^c_{n_0}}(s,v)e^{-\kappa/2s}\hat{m}_s^{n,1}(dv)ds
\leq\eps.
\end{align*}
Letting $\eps\to0$ we have for all $t\in[0,T],\ h\in\CC([0,T]),\ f\in\CC(\Lambda)$ that
\begin{align*}
\int_0^t\int_Ge^{-\kappa/2s}h(s)f(u)m_s(dv\ du)ds=\int_0^t\int_{\RR^{d-1}}e^{-\kappa/2s}f(\hat{g}^*(s,v))\hat{m}^1_s(dv)ds\quad\text{a.e. }\PP'.
\end{align*}
Combined with \eqref{eqn:36} this implies that 
\begin{align*}
m_s(dv\ du)=\del_{\hat{V}(s)}(dv)\del_{\hat{g}^*(s,\hat{V}(s))}(du),[\lambda\times\PP']\text{ a.e. }(s,w').
\end{align*}
This proves \eqref{eqn:37} and, as argued previously, completes the proof of the theorem.
\end{proof}

\section{Example}\label{Ex1}

The following class of models is studied in \cite{antunes2008stochastic}. Consider a system consisting of $N$ identical servers (nodes) of capacity $C\in\NN$ and $K$ different classes of jobs each with its own capacity requirement $A_k\in\NN,\ k\in\{1,\ldots,K\}$. External jobs of type $k$ arrive at each server with rate $\lambda_k$. A job of type $k$ remains at a given node for an exponential holding time with mean $\gamma_k^{-1}$ before attempting to move to another randomly chosen node. If the server has available capacity it accepts the job, otherwise the job is rejected and exits the system. If not rejected first, a type $k$ job remains in the system for an exponential amount of time with mean $\tau_k^{-1}$ before leaving the system. We make the usual assumptions of mutual independence, in particular a.s. at most one job may arrive, switch nodes, or exit the system at a given time, but note that such an event may correspond to the change in state of multiple servers.

For the discussion below, for simplicity, we consider the case where there are only two classes of jobs. In the notation of the current paper, the state process $X_N(t)=\{X_N^1(t),\ldots,X_N^N(t)\}$ is the pure jump Markov process where $X_N^i(t)$ takes values in 
\begin{align*}
\XX=\{(j,i)\in\NN_0\times\NN_0:jA_1+iA_2\leq C\}.
\end{align*} 
Let, as before, $d= |\XX|,\ \cls=\clp(\XX)$, and $\cls_N=\clp(\XX)\cap\frac{1}{N}\NN^d$. The empirical measure process, $\mu_N(t)\in\cls_N$, is a $d$-dimensional pure jump Markov process where $\mu_N^{j,i}(t)=\frac{1}{N}\sum_{k=1}^NI_{\{X_N^k(t)\}}((j,i))$ represents the proportion of nodes with exactly $j$ and $i$ jobs of type 1 and 2, respectively. We suppose that $\mu_N(0)=x_N$ a.s. for some deterministic $x_N\in\cls_N$ such that $x_N\to x_0$ as $N\to\iy$ and $x_0^{j,i}>0$ for all $(j,i)\in\XX$. Also suppose that $v_N\doteq\sqrt{N}(x_N-x_0)\to v_0$ as $N\to\iy$. The rate function $\bar \Gamma_N^k$ associated with this system is described in \cite{antunes2008stochastic}
but we present it below in our notation for completeness.
Jobs can enter or leave the system or switch nodes which means that there are three transition types for each class of job. Thus the set $\mathbf{K}$ of different jump types can be represented as $\mathbf{K}=\{E^i,L^i,C^i:i=1,2\}$ where $n_{E^i}=n_{L^i}=1$ and $n_{C^i}=2$ for $i=1,2$. Let for $(j,i)\in\XX,\ \hat{e}_{j,i}=(\del_{(j,i),(k,\ell)})_{(k,\ell)\in\XX}$ be the $d$-dimensional vector which is 1 for entry $(j,i)$ and 0 for all other entries. 
The sets corresponding to the possible jumps of each type are
\begin{align*}
\Del^{E^1}
&= \{(\hat{e}_{j,i},\hat{e}_{j+1,i}):(j,i)\in S^{E^1}\},\quad
\Del^{E^2}
= \{(\hat{e}_{j,i},\hat{e}_{j,i+1}):(j,i)\in S^{E^2}\}\\
\Del^{L^1}
&= \{(\hat{e}_{j,i},\hat{e}_{j-1,i}):(j,i)\in S^{L^1}\},\quad
\Del^{L^2}
= \{(\hat{e}_{j,i},\hat{e}_{j,i-1}):(j,i)\in S^{L^2}\}\\
\Del^{C^1}
&= \Del^{L^1}\cup\{(\hat{e}_{j,i}+\hat{e}_{j',i'},\hat{e}_{j-1,i}+\hat{e}_{j'+1,i'}):(j,i,j',i')\in S^{C^1}\}\\
\Del^{C^2}
&= \Del^{L^2}\cup\{(\hat{e}_{j,i}+\hat{e}_{j',i'},\hat{e}_{j,i-1}+\hat{e}_{j',i'+1}):(j,i,j',i')\in S^{C^2}\}.
\end{align*}
where $S^{E^1}=\{(j,i)\in\XX:(j+1,i)\in\XX\}$ and $S^{E^2},\ S^{L^1},\ S^{L^2},\ S^{C^1},\ S^{C^2}$ are defined similarly.

Let $r\in\cls_N$. The rate of jumps corresponding to a job arriving at a node with $j$ and $i$ jobs of classes 1 and 2, respectively, is equal to the number of nodes in this configuration multiplied by the rate at which jobs enter the system. Namely, the rate $\bar{\Gam}^k_N(r,\nu)$ when $\nu=(\hat{e}_{j,i},\hat{e}_{j+1,i})\in\Del^{k}$ and $k=E^1$ is $Nr^{j,i}\times\lambda_1$, and similarly $\bar{\Gam}^k_N(r,\nu)=Nr^{j,i}\times\lambda_2,\ \nu=(\hat{e}_{j,i},\hat{e}_{j,i+1})\in\Del^k,\ k=E^2$.  
The rate of departures is given similarly but, since all jobs are processed simultaneously, we need to multiply the processing rate by the number of jobs at a given node. Specifically,  $\bar{\Gam}^k_N(r,\nu)=j\times Nr^{j,i}\times\tau_1$ for $\nu=(\hat{e}_{j,i},\hat{e}_{j-1,i})\in\Del^k,\ k=L^1$ and $\bar{\Gam}^k_N(r,\nu)= i\times Nr^{j,i}\times\tau_2$ for $\nu=(\hat{e}_{j,i},\hat{e}_{j,i-1})\in\Del^k,\ k=L^2$.
When jobs attempt to change nodes there are two possible outcomes (successful and unsuccessful switching) which we will consider separately. The case in which a job successfully switches nodes is analogous to a job leaving the system but rates are multiplied by the proportion of nodes in the configuration to which the job is switching. Thus for a job switching from a node with $j$ and $i$ jobs to a node with $j'$ and $i'$ jobs (of types 1 and 2, respectively) we have $\bar{\Gam}^k_N(r,\nu)= j\times Nr^{j,i}\times\gamma_1\times\frac{Nr^{j',i'}}{N-1}$ where $\nu=(\hat{e}_{j,i}+\hat{e}_{j',i'},\hat{e}_{j-1,i}+\hat{e}_{j'+1,i'})\in\Del^k,\ k=C^1$ and $\bar{\Gam}^k_N(r,\nu)= i\times Nr^{j,i}\times\gamma_2\times\frac{Nr^{j',i'}}{N-1}$ for $\nu=(\hat{e}_{j,i}+\hat{e}_{j',i'},\hat{e}_{j,i-1}+\hat{e}_{j',i'+1})\in\Del^k,\ k=C^2$.
Next consider unsuccessful switches. Recall that if a job attempts to switch to a node at which there is not enough room, then the job is rejected from the system. The rate at which such jumps occur is, again, analogous to the previous scenario except we instead multiply by the proportion of nodes without enough room for the job attempting to move. Let $r^C_i$ be the proportion of nodes without enough room to accommodate a job of type $i$ (i.e. nodes in states $(i',j')$ with $(j'A_1+i'A_2+A_i> C)$). Then $\bar{\Gam}^k_N(r,\nu)= j\times Nr^{j,i}\times\gamma_1\times\frac{Nr^C_1}{N-1}$ for $\nu=(\hat{e}_{j,i},\hat{e}_{j-1,i})\in\Del^k,\ k=C^1$ and $\bar{\Gam}^k_N(r,\nu)= i\times Nr^{j,i}\times\gamma_2\times\frac{Nr^C_2}{N-1}$ for $\nu=(\hat{e}_{j,i},\hat{e}_{j,i-1})\in\Del^k,\ k=C^2$.

With the above definition of $\bar{\Gam}^k_N$, the generator of $\{\mu_N(t)\}$ is as given by \eqref{eqn:gen}. $\Gam^k$ is defined to be the limit of $\bar{\Gam}^k_N$ which is simply given as
\begin{align}\label{eqn:39}
\Gam^k(r,\nu)
&= \left\{\begin{array}{cl}
j\times r^{j,i}\times\gamma_1\times r^{j',i'} & \text{for }\nu=(\hat{e}_{j,i}+\hat{e}_{j',i'},\hat{e}_{j-1,i}+\hat{e}_{j'+1,i'})\in\Del^k,\ k=C^1\\
i\times r^{j,i}\times\gamma_2\times r^{j',i'} & \text{for }\nu=(\hat{e}_{j,i}+\hat{e}_{j',i'},\hat{e}_{j,i-1}+\hat{e}_{j',i'+1})\in\Del^k,\ k=C^2\\
j\times r^{j,i}\times\gamma_1\times r^C_1 & \text{for }\nu=(\hat{e}_{j,i},\hat{e}_{j-1,i})\in\Del^k,\ k=C^1\\
i\times r^{j,i}\times\gamma_2\times r^C_2 & \text{for }\nu=(\hat{e}_{j,i},\hat{e}_{j,i-1})\in\Del^k,\ k=C^2\\
\bar{\Gam}^k_1(r,\nu) & \text{otherwise}
\end{array}
\right.
\end{align}
for $r\in\cls$. Clearly $\Gam^k(\cdot,\nu)$ is Lipschitz for all $k\in\mathbf{K},\nu\in\Del^k$ and \eqref{eqn:ratecon} is satisfied so Condition \ref{con:1} holds in this example. From Proposition \ref{prop:2} we then have that $\mu_N(t)\to\mu(t)$ uniformly on $[0,T]$ where $\dot{\mu}(t)=F(\mu(t))$ and $F$ is as in \eqref{eqn:8}, with $\Gam^k$ as defined above.

Now suppose that the arrival rates $\lambda_i,\ i=1,2$ can be modulated by exercising an additive control with values in $\frac{1}{\sqrt{N}}[-D,D],\ D<\iy,\ i=1,2$. One can also consider control of any of the other parameters $\{\tau_i,\ \gamma_i:i=1,2\}$ but for simplicity we will only consider the control of the arrival rates. Let 
\begin{align}\label{eqn:conset}
\begin{split}
\Lambda=\{u\in\RR^{\mathbf{\ell}_1}\times \{0\}^{\mathbf{\ell}-\mathbf{\ell}_1}&|u_j=u_1^*\in[-D,D],j=1,\ldots,|\Del^{E^1}|,\\ 
&\qquad u_k=u_2^*\in[-D,D],k=|\Del^{E^1}|+1,\ldots,|\Del^{E^2}|\}
\end{split}
\end{align}
where $\mathbf{\ell}=\sum_{i=1}^2\left(|\Del^{E^i}|+|\Del^{L^i}|+|\Del^{C^i}|\right)$ and $\mathbf{\ell}_1=\sum_{i=1}^2|\Del^{E^i}|$. The controls will take values in $\Lambda_N=\frac{1}{\sqrt{N}}\Lambda$ . For a $u\in\Lambda$ or $\Lambda_N$ let $u_1^*$ refer to the value of the first $|\Del^{E_1}|$ coordinates and $u_2^*$ refer to the value of the next $|\Del^{E^2}|$ coordinates. Define the controlled rate function as
\begin{align}\label{eqn:38}
\Gam_N^k(r,u,\nu)
=\left\{\begin{array}{cl}
Nr^{j,i}\times(\lambda_1+u_1^*) & \text{for }k=E^1,\ \nu=(\hat{e}_{j,i},\hat{e}_{j+1,i})\in\Del^{E^1}\\
Nr^{j,i}\times(\lambda_2+u_2^*) & \text{for }k=E^2,\ \nu=(\hat{e}_{j,i},\hat{e}_{j,i+1})\in\Del^{E^2}\\
\bar{\Gam}_N^k(r,\nu) & \text{otherwise},
\end{array}
\right.
\end{align}
where $u\in\Lambda_N$. Since controls in $\Lambda_N$ are $\clo\left(\frac{1}{\sqrt{N}}\right)$, Condition \ref{con:3} is easily seen to be satisfied for the example. 

From our assumption that $x_0^{j,i}>0$ for all $\ (j,i)\in\XX$, it follows that $\mu_t^{j,i}>0$ for all $(j,i)\in\XX$ and $0\leq t\leq T$. Using this and the form of $\Gam^k$ given in \eqref{eqn:39}, it is then easy to check that Condition \ref{con:8} is satisfied. Similarly our assumption on the initial conditions in Theorem \ref{thm:main} is satisfied as well. Recalling the definitions of $\Gam_N^k$ and $\Gam^k$ in \eqref{eqn:38} and \eqref{eqn:39}, respectively, we see that there exists a $\kappa \in (0,\infty)$ such that for all $y\in B(2\sqrt{N}), u\in\Lambda_N, \xi\in\cls_N(y)$
\begin{align*}
\sqrt{N}\left(\frac{1}{N}\Gam_N^{k}\left(\frac{1}{\sqrt{N}}y+\xi,u,\nu\right)-\Gam^{k}\left(\xi,\nu\right)\right)
\leq \kappa(1+\|y\|)
\end{align*}
and therefore Condition \ref{con:6} is satisfied. For $k\in\mathbf{K},\nu\in\Del^k$ define $h_1^k(\nu,\cdot):\cls\to\RR$ as
\begin{align*}
h_1^k(\nu,r)
=\left\{\begin{array}{cl}
r^{j,i} & \text{for }k=E^1,\ \nu=(\hat{e}_{j,i},\hat{e}_{j+1,i})\in\Del^{E^1}\\
r^{j,i} & \text{for }k=E^2,\ \nu=(\hat{e}_{j,i},\hat{e}_{j,i+1})\in\Del^{E^2}\\
0 & \text{otherwise}
\end{array}
\right.
\end{align*}
and $h_2^k(\nu,\cdot)$ as
\begin{align*}
\left\{\begin{array}{cl}
\lambda_1\times e_{j,i} & \text{for }k=E^1,\ \nu=(\hat{e}_{j,i},\hat{e}_{j+1,i})\in\Del^{E^1}\\
\lambda_2\times e_{j,i} & \text{for }k=E^2,\ \nu=(\hat{e}_{j,i},\hat{e}_{j,i+1})\in\Del^{E^2}\\
j\times\mu_1\times e_{j,i} & \text{for }k=L^1,\ \nu=(\hat{e}_{j,i},\hat{e}_{j-1,i})\in\Del^{L^1}\\
i\times\mu_2\times e_{j,i} & \text{for }k=L^2,\ \nu=(\hat{e}_{j,i},\hat{e}_{j,i-1})\in\Del^{L^2}\\
j\times \gamma_1\times (r^{j,i}\times e_{j',i'}+r^{j',i'}\times e_{j,i}) & \text{for }\nu=(\hat{e}_{j,i}+\hat{e}_{j',i'},\hat{e}_{j-1,i}+\hat{e}_{j'+1,i'})\in\Del^k,\ k=C^1\\
i\times \gamma_2\times (r^{j,i}\times e_{j',i'}+r^{j',i'}\times e_{j,i}) & \text{for }\nu=(\hat{e}_{j,i}+\hat{e}_{j',i'},\hat{e}_{j,i-1}+\hat{e}_{j',i'+1})\in\Del^k,\ k=C^2\\
j\times \gamma_1\times (r^{j,i}\times e_C^1+r^C_1\times e_{j,i}) & \text{for }\nu=(\hat{e}_{j,i},\hat{e}_{j-1,i})\in\Del^k,\ k=C^1\\
i\times \gamma_2\times (r^{j,i}\times e_C^2+r^C_2\times e_{j,i}) & \text{for }\nu=(\hat{e}_{j,i},\hat{e}_{j,i-1})\in\Del^k,\ k=C^2.\\
\end{array}
\right.
\end{align*}
Defining $H^k,\beta_k^N$ as in Condition \ref{con:7} with $h_1^k$ and $h_2^k$ we see that \eqref{eqn:betazero} is satisfied and thus Condition \ref{con:7} holds for the example.

We now introduce the following finite time horizon cost
\begin{align}\label{eqn:LQRCost}
J^N(U^N,v_N)=\E\int_0^T (\|V_N(t)\|^2+\alpha\|\sqrt{N}U^N(t)\|^2)dt,\ U^N\in\cla_N,
\end{align}
where $\alpha \in (0,\infty)$.
The cost function penalizes both the deviation from the nominal behavior and exercising rate control.
Note that this cost function satisfies the condition introduced below \eqref{eqn:costprelim}.  We have thus verified all the conditions needed for Theorem \ref{thm:main} and from this result it follows that a near optimal continuous feedback control for the diffusion control problem can be used to construct an asymptotically optimal sequence of control policies for this system. The diffusion control problem here takes the same form as \eqref{eqn:costdiff} with $\eta$ and $\beta$ as in \eqref{eqn:16} and $\sigma$ as in \eqref{eqn:7} with cost given as
\begin{align}\label{eqn:exdiffcost}
J(U,v_0)=\E\int_0^T (\|V(t)\|^2+\alpha\|U(t)\|^2)dt,\ U\in\cla(\Xi).
\end{align}
This is the classical stochastic linear-quadratic regulator problem which has been well studied (cf. \cite{fleming1976deterministic}). 
Replacing $[-D,D]$ with $\RR$ in the definition of the control set in \eqref{eqn:conset}, the optimal control for the limit stochastic LQR is given in feedback form as follows
\begin{align*}
u^*(s,y)= -B'(s)K^*(s)V(s)
\end{align*}
where $B$ is defined in terms of $\{h_1^k,k\in\mathbf{K}\}$ via the relation $\eta(t,u)=B(t)u$ and $K^*$ solves an appropriate Riccati equation (see \cite{fleming1976deterministic}). For implementing this feedback control for the prelimit system we truncate $u^*$ suitably; such a modification, in practice, has little to no effect for large $N$. We construct $U^N_g$ as in Section \ref{MainRes}, by taking $U^N_g(t)=\sqrt{N}u^*(t,V_N(t))$. 

We now present our numerical results. The above control policy was implemented (for $\alpha=.01$ and $.001$) on $N_{\text{trials}}=128$ different realizations of the stochastic process with the following parameters $N=10,000,T=10,C=6,A_1=1,A_2=1,\lambda_1=1,\lambda_2=1,\tau_1=1,\tau_2=1,\gamma_1=1,\gamma_2=1$.  We also simulate $128$ realizations of the corresponding uncontrolled system. Table \ref{tab:cost} shows the averaged cost over the 128 simulations for the controlled and uncontrolled systems. The control policy based on the optimal feedback control for the stochastic LQR leads to a reduction in cost of 12.7\% for $\alpha=.01$ and 15.5\% for $\alpha=.001$. 
\begin{table}[h]
\caption{Cost over 128 Simulations} \label{tab:cost}
\begin{tabular}{|c|c|c|c|}
\hline
 & Uncontrolled & Controlled with $\alpha=.01$ & Controlled with $\alpha=.001$\\
 \hline
Deviation Cost & 8.9556 & 8.1271 & 7.5649\\
\hline
Control Cost & 0 & $.01\times 25.37$ & $.001\times 256.8$\\
\hline
Total Cost & 8.9556 & 8.3809 & 7.8217\\
\hline
\end{tabular}
\end{table}
The deviations from the nominal values under the controlled and uncontrolled systems are computed by calculating the average,
\begin{align*}
\frac{1}{N_{\text{trials}}}\sum_{i=1}^{N_{\text{trials}}}\int_0^T\|V_N(s)\|^2ds
\end{align*}
for the two systems and the cost of exercising control is computed by the average,
\begin{align*}
\alpha\times\frac{1}{N_{\text{trials}}}\sum_{i=1}^{N_{\text{trials}}}\int_0^T\|\sqrt{N}U^N(t)\|^2ds.
\end{align*}
The deviations are smaller for the controlled system as expected. In general, one can achieve higher reduction in such deviations by decreasing the parameter $\alpha$ in the cost function. In practice the tuning parameter $\alpha$ suitably balances the cost of deviating from the nominal values and the cost for exercising control.

\setcounter{equation}{0}
\appendix

\numberwithin{equation}{section}

\section{Auxiliary Results}
\subsection{Conditions [A] and [T$_1$] of \cite{joffe1986weak}}
For the sake of the reader's convenience we present Theorem 2.3.2 and Conditions [A] and [T$_1$] of \cite{joffe1986weak} in a form that are used here. Let $\{M^n\}$ be a sequence of $\RR^k$-valued processes which are RCLL (right continuous with left limit) locally square-integrable martingales, defined on their own filtered probability space $\{(\Om^n, \clf^n,(\clf_t^n),\PP^n)\}$. Consider the following two conditions for a sequence of $k$-dimensional RCLL processes $\{X_N\}$, with $X_N$ defined on $(\Om^n,\clf^n,(\clf^n_t),\PP^n)$.
\begin{description}
\item{[A]} For each $\eps>0,\eta>0$ there exists a $\del>0$ and $n_0\in \NN$ with the property that for every family of stopping times $\{\tau_n\}_{n\in\NN}$ ($\tau_n$ being an $\clf^n$-stopping time on $\Om^n$) with $\tau_n\leq T-\delta$,
\begin{align*}
\sup_{n\geq n_0}\sup_{\theta\leq \del}P^n\{\|X^n_{\tau_n}-X^n_{\tau_n+\theta}\|\geq \eta\}\leq \eps.
\end{align*}
\item{[T$_1$]} For every $t$ in some dense subset  of $[0,T]$, $\{X_t^n\}_{n\in\NN}$ is a tight sequence of $\RR^k$ valued random variables.
\end{description}
\begin{theorem}[2.3.2 of \cite{joffe1986weak} (Rebolledo)]\label{thm:apptight}  Let $\lan M^n\ran\doteq\sum_{i=1}^k\lan M_i^n,M_i^n\ran$ be the predictable quadratic variation process associated with the $k$-dimensional local martingale $M^n$. Then if the sequence $\{\lan M^n\ran\}_{n\in\NN}$ of $\RR$-valued stochastic processes satisfies condition [A], the same condition holds for the sequence $\{M^n\}_{n\in\NN}$ of $\RR^k$-valued stochastic processes. Futhermore if $\{\lan M^n\ran\}_{n\in\NN}$ satisfies [T$_1$] then the same condition holds for $\{M^n\}_{n\in\NN}$. In particular if $\{\lan M^n\ran\}_{n\in\NN}$ satisfies [A] and [T$_1$], the sequence $\{\{\lan M^n_i,M^n_i\ran,i=1,\ldots,k\}\}_{n\in\NN}$ and $\{M^n\}_{n\in\NN}$ are tight in $\cld([0,T]:\RR^k)$.
\end{theorem}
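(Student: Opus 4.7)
The plan is to reduce the entire statement to one classical tool, Lenglart's domination inequality: if $X$ is a nonnegative RCLL adapted process that is dominated by an increasing predictable process $A$ (in the sense that $\E[X_\sigma]\le\E[A_\sigma]$ for every bounded stopping time $\sigma$), then for every $\eta,\rho>0$,
\begin{align*}
\PP\Big(\sup_{0\le s\le T} X_s \ge \eta\Big) \le \frac{\rho}{\eta} + \PP(A_T\ge \rho).
\end{align*}
For each coordinate $i$, Doob--Meyer gives that $s\mapsto(M^n_i(s\vee\tau_n)-M^n_i(\tau_n))^2$ is dominated, in Lenglart's sense, by the predictable increasing process $s\mapsto\lan M^n_i\ran_{s\vee\tau_n}-\lan M^n_i\ran_{\tau_n}$, whose value at any time is pointwise bounded by the scalar $\lan M^n\ran$ featured in the hypothesis, since $\lan M^n_i\ran\le\sum_j\lan M^n_j\ran=\lan M^n\ran$.

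For the transfer of [A], I would fix $\eps,\eta>0$ and apply [A] to the scalar process $\lan M^n\ran$ to choose $\del>0$ and $n_0$ such that for any family of $\clf^n$-stopping times $\tau_n\le T-\del$,
\begin{align*}
\sup_{n\ge n_0}\sup_{\theta\le\del}\PP\big(\lan M^n\ran_{\tau_n+\theta}-\lan M^n\ran_{\tau_n}\ge \tfrac{\eps\eta^2}{2k}\big)\le \frac{\eps}{2k}.
\end{align*}
Lenglart's inequality applied coordinate-wise on the (shifted) time interval $[\tau_n,\tau_n+\theta]$, with $\rho=\eps\eta^2/(2k)$, then gives $\PP(|M^n_i(\tau_n+\theta)-M^n_i(\tau_n)|\ge\eta/\sqrt{k})\le\eps/k$, and the union bound combined with $\|M^n_{\tau_n+\theta}-M^n_{\tau_n}\|^2\le k\max_i|M^n_i(\tau_n+\theta)-M^n_i(\tau_n)|^2$ produces the condition [A] for $\{M^n\}$.

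For the transfer of [T$_1$], tightness of $\{\lan M^n\ran_t\}$ on $\RR$ produces, for every $\eps>0$, some $K<\iy$ with $\PP(\lan M^n\ran_t>K)\le\eps/2$ uniformly in $n$. Applying Lenglart to $(M^n_i(s))^2$ dominated by $\lan M^n_i\ran_s\le\lan M^n\ran_s$ on $[0,t]$ with $\rho=K$ yields $\PP(|M^n_i(t)|>\eta)\le K/\eta^2+\eps/2$, so for $\eta$ large enough the marginals are tight, and tightness of $M^n(t)$ in $\RR^k$ follows. The final ``in particular'' assertion is then immediate from Aldous' tightness criterion in $\DD([0,T]:\RR^k)$: conditions [A] and [T$_1$], now verified for $\{M^n\}$, are exactly the modulus-of-continuity and compact-containment hypotheses of that criterion. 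The same criterion applies to the scalar nondecreasing processes $\{\lan M^n_i,M^n_i\ran\}$, for which [A] is inherited from $\lan M^n\ran=\sum_i\lan M^n_i\ran$ and [T$_1$] at each $t$ follows from $\lan M^n_i\ran_t\le\lan M^n\ran_t$.

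The main technical obstacle is the localization required because $M^n$ is only \emph{locally} square-integrable: strictly speaking one first applies Lenglart to the stopped martingales $M^n(\cdot\wedge\sigma^n_m)$, for a reducing sequence $\{\sigma^n_m\}$ of stopping times, whose $L^2$-theory is unambiguous, and then passes to the limit $m\to\iy$ using Fatou's lemma and the fact that $\sigma^n_m\to\iy$ almost surely. This step is routine once the right Lenglart inequality is in hand, so the substantive content of the theorem is really the domination principle itself, packaged with the coordinate-wise/union-bound manipulations described above.
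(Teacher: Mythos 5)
The paper does not actually prove this statement: Theorem \ref{thm:apptight} is quoted verbatim from Joffe--M\'{e}tivier (their Theorem 2.3.2, Rebolledo's criterion) purely for the reader's convenience, so there is no in-paper proof to compare against. Your Lenglart-based argument is the standard proof of that criterion and is essentially correct: dominating $(M^n_i)^2$ by $\lan M^n_i\ran\le\lan M^n\ran$, applying Lenglart on the shifted interval $[\tau_n,\tau_n+\theta]$ to transfer [A], using the one-dimensional marginal bound to transfer [T$_1$], inheriting both conditions for the brackets from monotonicity of $\lan M^n_i\ran$, and invoking Aldous' criterion to conclude tightness in $\DD([0,T]:\RR^k)$ are exactly the ingredients of Rebolledo's original argument. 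Two small points. First, a constant slip: with $\rho=\eps\eta^2/(2k)$, Lenglart gives a per-coordinate bound of $\eps/2+\eps/(2k)$ rather than $\eps/k$, so after the union bound over $k$ coordinates you do not land below $\eps$; take instead $\rho=\eps\eta^2/(2k^2)$ and apply [A] to $\lan M^n\ran$ at level $\eps\eta^2/(2k^2)$ with probability bound $\eps/(2k)$. Second, the closing localization discussion is fine but not really needed: for a locally square-integrable martingale the domination $\E[(M^n_i(\sigma))^2]\le\E[\lan M^n_i\ran_\sigma]$ for bounded stopping times $\sigma$ already follows by Fatou along a reducing sequence, and Lenglart's inequality is customarily stated at that level of generality, so no separate passage to the limit in $m$ is required.
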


\bigskip
{\bf Acknowledgements.}  Research supported in part by the National Science Foundation (DMS-1016441, DMS-1305120), the Army Research Office (W911NF-14-1-0331) and DARPA (W911NF-15-2-0122).


{\sc
\bigskip
\noi
A. Budhiraja (email: budhiraj@email.unc.edu)\\
E. Friedlander(email: ebf2@live.unc.edu)\\
Department of Statistics and Operations Research\\
University of North Carolina\\
Chapel Hill, NC 27599, USA

}

\end{document}